\documentclass{article}
\usepackage{arxiv,times} 

\usepackage[T1]{fontenc}    
\usepackage{hyperref}       
\usepackage{url}            
\usepackage{booktabs}       
\usepackage{nicefrac}       
\usepackage{microtype}      
\usepackage{enumitem}
\usepackage{placeins}
\usepackage{multirow}

\usepackage{amsmath, amsthm, amsfonts}
\theoremstyle{plain}
\theoremstyle{definition}
\newtheorem{definition}{Definition}
\newtheorem{theorem}{Theorem}
\newtheorem{remark}{Remark}
\newcommand{\dist}{\mathrm{dist}}

\usepackage{graphicx, calc} 
\usepackage{xcolor}
\usepackage{pgfplots}
\usepgfplotslibrary{fillbetween}
\xdefinecolor{nina_green}{RGB}{23, 197, 30}
\xdefinecolor{nina_blue}{RGB}{71, 201, 227}
\usepackage{pgfplots}
\usepgfplotslibrary{fillbetween}
\usepackage{tikz}
\usetikzlibrary{arrows, positioning, shapes}
\usetikzlibrary{calc, patterns, angles, quotes}
\tikzset{
    right angle quadrant/.code={
        \pgfmathsetmacro\quadranta{{1.5,1.5,-1.5,-1.5}[#1-1]}   
        \pgfmathsetmacro\quadrantb{{1.5,-1.5,-1.5,1.5}[#1-1]}},
    right angle quadrant=1, 
    right angle length/.code={\def\rightanglelength{#1}},   
    right angle length=2ex, 
    right angle symbol/.style n args={3}{
        insert path={
            let \p0 = ($(#1)!(#3)!(#2)$) in     
                let \p1 = ($(\p0)!\quadranta*\rightanglelength!(#3)$), 
                \p2 = ($(\p0)!\quadrantb*\rightanglelength!(#2)$) in 
                let \p3 = ($(\p1)+(\p2)-(\p0)$) in  
            (\p1) -- (\p3) -- (\p2)
        }
    }
}

\title{On the Effectiveness of Persistent Homology} 

\author{%
  Renata Turke\v{s} \\
  University of Antwerp \\
  \texttt{renata.turkes@uantwerpen.be} \\ 
  \And  
  Guido Mont\'{u}far \\
  University of California, Los Angeles\\
  \texttt{montufar@math.ucla.edu} \\  
  \And  
  Nina Otter \\
  Queen Mary University of London \\
  \texttt{n.otter@qmul.ac.uk} \\
}

\begin{document}
\maketitle

\begin{abstract}
\noindent 
Persistent homology (PH) is one of the most popular methods in Topological Data Analysis. Even though PH has been used in many different types of applications, the reasons behind its success remain elusive; in particular, it is not known for which classes of problems it is most effective, or to what extent it can detect geometric or topological features. The goal of this work is to identify some types of problems where PH performs well or even better than other methods in data analysis. We consider three fundamental shape analysis tasks: the detection of the number of holes, curvature and convexity from 2D and 3D point clouds sampled from shapes. Experiments demonstrate that PH is successful in these tasks, outperforming several baselines, including PointNet, an architecture inspired precisely by the properties of point clouds. In addition, we observe that PH remains effective for limited computational resources and limited training data, as well as out-of-distribution test data, including various data transformations and noise. For convexity detection, we provide a theoretical guarantee that PH is effective for this task in $\mathbb{R}^d$, and demonstrate the detection of a convexity measure on the FLAVIA data set of plant leaf images. Due to the crucial role of shape classification in understanding mathematical and physical structures and objects, and in many applications, the findings of this work will provide some knowledge about the types of problems that are appropriate for PH, so that it can --- to borrow the words from Wigner 1960 --- ``remain valid in future research, and extend, to our pleasure", but to our lesser bafflement, to a variety of applications. 
\end{abstract}

\section{Introduction} 
\label{section_introduction}

Persistent homology (PH) is an extension of homology, which gives a way to capture topological information about connectivity and holes in a geometric object. PH can be regarded as a framework to compute representations of raw data that can be used for further processing or as inputs to learning algorithms. There have been numerous successful applications of PH in the last decade, from prediction of biomolecular properties \cite{cang2017topologynet, cang2018representability, wang2020topology}, face, gait and activity recognition \cite{zhou2017exploring, lamar2012human, leon2014topological, jimenez2016designing} or digital forensics \cite{asaad2017topological}, to discriminating breast-cancer subtypes \cite{singh2014topological}, quantifying the porosity of nanoporous materials \cite{lee2017quantifying}, classifying fingerprints \cite{giansiracusa2017persistent}, or studying the morphology of leaves \cite{li2018topological}.\footnote{A database of applications of persistent homology is being maintained at \cite{giunti2021applications}.} At the same time, the reasons behind these successes are not yet well understood. Indeed, the data used in real-world applications is complex, so that there are numerous effects at play and one is often left unsure why PH worked, i.e., what type of topological or geometric information it captured that facilitated the good performance.

The title of our manuscript is inspired by a famous paper from 1960, ``The unreasonable effectiveness of mathematics in the natural sciences'' \cite{wigner1960unreasonable}, in which Wigner discusses, with wonder, how mathematical concepts have applicability far beyond the context in which they were originally developed. The same, we believe, is true for persistent homology. While this method has been applied successfully to a wide range of application problems, we believe that for PH to remain relevant, there is a need to better understand why it is so successful. Thus, we distinguish between the {\em usefulness} of PH for applications, which has been attested in hundreds of applications and publications, and its {\em effectiveness}, namely that PH is capable of producing an intended or desired result. Thus, here we initiate an investigation into the effectiveness of PH, or in other words, we investigate \emph{what} is seen by persistent homology: Given a data set, i.e., a point cloud, which underlying topological and geometric features can we detect with PH? This question is related to manifold learning and, specifically, topological and geometric inference: Given a finite point cloud $X$ of (noisy) samples from an unknown manifold $M,$ how can one infer properties of $M$ \cite{chazal2013geometric, chazal2017robust, boissonnat2018geometric, bobrowski2015topology}? Obtaining a representation of a shape that can be used in statistical models is an important task in data analysis and numerous approaches to modeling surfaces and shapes \cite{turner2014persistent}.

To pursue our investigation, we set out to identify some fundamental data-analysis tasks that can be solved with PH. Since PH is inspired by homology, which provides a measure for the number of components, holes, voids, and higher-dimensional cycles of a space --- to which we collectively refer as ``topological features'' ---, we start with the obvious question of whether PH applied to a point cloud sampled from a geometric object can detect the \textbf{\textit{number of ($1$-dimensional) holes}} of the underlying object. Unlike homology, however, PH registers also the persistence of topological features across scales, and can thereby capture geometric information, such as size or position of holes. We therefore also investigate how well PH can detect fundamental geometric notions of \textbf{\textit{curvature}} and \textbf{\textit{convexity}}. For each of the three problems, we first discuss theoretical results that provide a guarantee that PH can solve these tasks. Detection of convexity with PH has not been investigated in the literature to date, and we prove a new result.

To investigate how well the PH pipeline works in practice, we compare its performance against several baselines on synthetic point-cloud data sets. As a first machine learning (ML) baseline we take an SVM trained on the distance matrices of point clouds. We further consider fully-connected neural networks (NN) with a single or multiple hidden layers, also trained on distance matrices. As a stronger baseline we consider a PointNet trained on the point clouds directly. PointNet \cite{qi2017pointnet, griffiths2020point} is designed specifically for point cloud data. Similar architectures with convolutional (and fully-connected and pooling) layers have been applied for Betti-number and curvature estimation \cite{paul2019estimating, guerrero2018pcpnet}. For convexity detection, we also evaluate the performance of PH on real-world data. The theoretical guarantees above imply that the results for PH would generalize to new data.

Finally, we note that our goal is not to claim the superiority of PH compared to other approaches in the literature, in particular, with the state-of-the-art methods for each of the problems. We do not necessarily expect that on well-specified mathematical problems PH will beat state-of-the-art algorithms that have been specifically designed for those tasks. 
Instead, what we think is interesting and remarkable is that PH can in fact solve  tasks it is not specifically or uniquely designed for. Moreover, an advantage of PH is that it can reveal, e.g., both topology and curvature at the same time, avoiding the need to employ and combine state-of-the-art models for each of the tasks.

\paragraph{Related work} In spite of the growing interest in PH, so far there is only limited work in the direction that we pursue here. There is indeed theoretical evidence that the number of holes of the underlying space can be detected from PH (under some conditions about the target space, the sample density and closeness to the space) \cite{chazal2008towards, niyogi2008finding, kim2019homotopy}, and there is significant interest in investigating how well this works in practice \cite{chazal2013bootstrap}. However, so far there are only few available results. Some works demonstrate that PH can be used to detect the number of holes, but only on individual toy examples (e.g. \cite{robins2002computational}, \cite[Figure 19]{chazal2013geometric}, \cite[Figures 9-20]{kurlin2014fast}, \cite[Figures 2, 3, 6, 11, 12]{chazal2017robust}) without looking into the statistical significance between different classes of data, or the accuracy of some classification algorithms on a comprehensive data set. There are also some works where PH is used to estimate the Betti numbers on a possibly larger data set, but only with the goal of using this information to e.g., study the behavior of deep neural networks \cite{naitzat2020topology} or ensure topologically correct dimensionality reduction \cite{paul2017study} or image segmentation \cite{hu2019topology}, so that the soundness of this estimation is not investigated, which is the focus of our work.

Some insights about PH and curvature have been obtained in the literature, starting with an illustrative example in \cite[Figure 12]{collins2004barcode} which shows that PH on the filtered tangent complex can distinguish between letters (C and I) that have the same topology, since their curvature is different. Recently, \cite{bubenik2020persistent} show both theoretically and experimentally that PH can predict curvature (with computational experiments replicated in \cite{vipond2020multiparameter}), which inspired us to investigate this problem in more detail.

Regarding the important geometric problem of classification between convex and concave shapes, we were not able to identify any previous works investigating the applicability of PH to this task.

Some further recent work investigating the topological and geometric features seen by PH are the following. Bubenik and D{\l}otko \cite{bubenik2017persistence} show that using PH of points sampled from spheres one can determine the dimension of the underlying spheres. A connection has also been established between PH and  the magnitude of a metric space (an isometric invariant) \cite{otter2018magnitude}.  There have been several efforts in using PH to estimate fractal dimensions, such as  \cite{schweinhart2020fractal} in which Schweinhart proves that the fractal dimension of some metric spaces can be recovered from the PH of random samples.

\paragraph{Main contributions} Our contributions can be summarized as follows.
\begin{itemize}[leftmargin=*]

\item We prove that PH can detect convexity in $\mathbb{R}^d$ (Theorem~\ref{thm_convexity}).

\item We define a new tubular filtration function (medium through which PH is extracted from data), that is crucial for the detection of convexity (Definition~\ref{def_tubular}).

\item We demonstrate experimentally that PH can detect the number of holes (Section~\ref{section_holes}), curvature (Section~\ref{section_curvature}), and convexity (Section~\ref{section_convexity}) from synthetic point clouds in $\mathbb{R}^2$ or $\mathbb{R}^3$, outperforming SVMs and fully-connected networks trained on distance matrices, and PointNet trained on point clouds. For convexity detection, we also show that PH obtains a good performance on a real-world data set of plant leaf images.

\item We demonstrate experimentally that PH features allow to solve the above tasks even in the case of limited training data (Section~\ref{section_holes}), noisy (Section~\ref{section_holes}) 
and out-of-distribution (Section~\ref{section_convexity}) test data, and limited computational resources (Section~\ref{section_holes}, Section~\ref{section_curvature}, Section~\ref{section_convexity}). 

\item We provide insights about the topological and geometric features that are captured with long and short persistence intervals (Section~\ref{section_discussion}), and formulate guidelines for applications that are suitable for PH (Section~\ref{section_conclusions}). 

\item We provide data sets that can be directly used as a benchmark for our tasks or other related point-cloud-analysis or classification problems. We provide computer code to construct more data and replicate our experiments. 

\end{itemize}

\section{Background on persistent homology}
\label{section_ph}

Homology is a topological concept that attempts to distinguish between topological spaces by constructing algebraic invariants that reflect their connectivity properties \cite{robins2002computational}, i.e., $k$-dimensional cycles (components, holes, voids, \dots). The number of independent $k$-dimensional cycles is called $k$-th Betti number and denoted by $\beta_k.$ For example, the circle has Betti numbers $\beta_0=1,$ $\beta_1=1$, $\beta_2=0$, and for a torus, we have $\beta_0=1$, $\beta_1=2,$ $\beta_2=1$.

Persistent homology is an extension of this idea \cite{zomorodian2005computing} that has found success in applications to data. To calculate PH from some data $X,$ we must first build a filtration, i.e., a family of nested topological spaces $\{ K_r \}_{r \in \mathbb{R}}$ which, in a suitable sense, approximate $X$ at different scales $r \in \mathbb{R}.$ Typically, $X = \{x_1, x_2, \dots, x_n\}$ is a point cloud in $\mathbb{R}^d,$ and $K_r$ is a simplicial complex, a set of simplices $\sigma$ (which we can think of as vertices, edges, triangles, \dots) such that if $\sigma \in K_r$ and $\tau \subseteq \sigma,$ then $\tau \in K_r$ \cite{collins2004barcode}. A common choice is $K_r = VR(X, r),$ where $VR(X, r)$ is the Vietoris-Rips simplicial complex, in which $\sigma=\{x_1,\dots , x_m\}\in VR(X, r)$ when $\dist(x_i, x_j)\leq r$ for all $1\leq i,j\leq m$. PH can then be summarized with a persistence diagram (PD), a scatter plot with the $x$ and $y$ axes respectively depicting the scale $r \in \mathbb{R}$ at which each cycle is born and dies or is identified (i.e., merges) with another cycle within a filtration. The length $l = d - b$ of a persistence interval $(b, d)$ measures the lifespan --- the so-called ``persistence'' --- of the corresponding cycle in the filtration.

Instead of working directly with persistence diagrams, these are often represented by other signatures that are better suited for  machine learning frameworks. A common choice is a persistence image (PI) \cite{adams2017persistence} (discretized sum of Gaussian kernels centered at the PD points), or a persistence landscape (PL) (functions obtained by ``stacking isosceles triangles" above persistence intervals, with height reflecting their lifespan) \cite{bubenik2015statistical}. The steps for extracting PH features are visualized in Appendix~\ref{app_exp_pipelines}. For good choices of filtration and signature \cite{turkevs2021noise}, there are theoretical results that guarantee that PH is stable under small perturbations \cite{chazal2009stability, skraba2020wasserstein}. After the PH signature is calculated, statistical hypothesis testing \cite{bubenik2015statistical, berry2018functional}, or machine learning techniques such as SVM or k-NN \cite{adams2017persistence, garin2019topological, obayashi2018persistence} can be used on these features to study the differences within the data set of interest. 
It is important to note that PH is very flexible, as different choices can be made in every step of the pipeline, regarding the input and output of PH, as detailed in the remainder of this section.

\subsection{Approximation of a space at scale $r \in \mathbb{R}$}
\label{section_ph_subsection_approx}

Instead of the Vietoris-Rips complex, other types of complexes can be used to approximate the data $X$ at the given scale $r \in \mathbb{R}.$ For instance, since Vietoris-Rips simplicial complex is large \cite{otter2017roadmap}, one might rather choose the alpha complex \cite{edelsbrunner2000topological} (for a visualization, see Appendix~\ref{app_holes_pipeline}), which is closely related to the Vietoris-Rips complex \cite{kim2019homotopy}, but consists of significantly less simplices and is faster to construct when the dimension of the ambient space is $2$ or $3$ (for details, see Appendix~\ref{app_thm_computational}). If data $X$ is an image rather than a point cloud, or if the point cloud can be seen as an image without losing important information, cubical complexes \cite{kaczynski2004computational} might be a more suitable choice, where vertices, edges and triangles are replaced by vertices, edges and squares (for a visualization, see Appendix~\ref{app_convexity_pipeline}).

\subsection{Filtration}
\label{section_ph_subsection_filtration}

A filtration $\{ K_r \}_{r \in \mathbb{R}}$ of a point cloud in $\mathbb{R}^d$ can be constructed from any function $f: \mathbb{R}^d \rightarrow \mathbb{R}$ by considering $K_r$ to be the sublevel set of $f$ thresholded by $r \in \mathbb{R}$:  $\{ y \in \mathbb{R}^d \mid f(y) \leq r \}$.  
The underlying filtration function in the common PH pipeline introduced above is the distance function $\delta_X: \mathbb{R}^d \rightarrow \mathbb{R},$ where $\delta_X(y) = \min \{ \dist(y, x) \mid x \in X \}$ is the distance to point cloud $X.$ Indeed, the Vietoris-Rips simplicial complex $VR(X, r)$ approximates the sublevel set 
$$K_r = \delta_X^{-1}((-\infty, r]) = \{ y \in \mathbb{R}^d \mid \delta_X(y) \leq r \} = \cup_{x \in X} B(x, r),$$
where $B(x, r)$ is a ball with radius $r$ centered around $x \in X$ \cite{chazal2011geometric}.

However, PH on such a filtration is very sensitive to outliers, since even a single outlier changes $\delta_X$ significantly. In the presence of outliers, it is better to replace the distance function with the Distance-to-Measure (DTM) $\delta_{X, m}: \mathbb{R}^d \rightarrow \mathbb{R},$ where $\delta_{X, m}(x)$ is the average distance from a number of neighbors on the point cloud \cite{chazal2011geometric, anai2020dtm} (for a visualization, see Appendix~\ref{app_holes_pipeline}). However, depending on the task, there are many other filtration functions one could choose, such as rank \cite{petri2013topological}, height, radial, erosion, dilation \cite{garin2019topological}, and the resulting PH captures completely different information about the cycles \cite{turkevs2021noise}. For example, whereas PH with respect to the Vietoris-Rips filtration encodes the size of the hole, PH on the height filtration informs about the position of the hole. For a more detailed discussion about the influence of filtration, see Appendix~\ref{app_guidelines}.

\subsection{Persistence signature} 
\label{section_ph_subsection_signature}

Next to PIs and PLs, a plethora of persistence signatures has been introduced in the literature, e.g., Betti numbers \cite{islambekov2019harnessing, umeda2017time} or Euler characteristic \cite{li2018persistent} (across scales), or even scalar summaries such as amplitude \cite{garin2019topological}, entropy \cite{chintakunta2015entropy, rucco2016characterisation}, or algebraic functions of the birth and death values \cite{adcock2013ring,kalivsnik2019tropical}. Some of these signatures summarize the same information, but lie in different metric spaces \cite{turner2020same}. Others, however, such as the scalar summaries listed above, discard information compared to PDs, as it might sometimes be useful to, e.g., only capture the (total or maximum) persistence of intervals, but not all the detailed information about all birth and death values (see Appendix~\ref{app_guidelines}).

\section{Number of holes}
\label{section_holes}

In this section, we focus on the task of (ordinal) classification of point clouds by the number of $1$-dimensional holes. Research in psychology shows that global properties often dominate perception, and, in particular, that topological invariants such as number of holes, inside versus outside, and connectivity can be effective primitives for recognizing shapes \cite{pomerantz2003wholes}.  Extracting such topological information can therefore prove useful for many computer vision tasks. There are theoretical results in the literature that ensure that PH with respect to the alpha simplicial complex can be successful for this problem (Appendix~\ref{app_thm_holes}), and the computational experiments that follow demonstrate this success in practice.

\paragraph{Data} We consider $20$ different shapes in $\mathbb{R}^2$ and $\mathbb{R}^3$, with four different shapes having the same number of holes ($0, 1, 2, 4$ or $9$). For each shape, we construct $50$ point clouds each consisting of $1\,000$ points sampled from a uniform distribution over the shape, resulting in a balanced data set of $1\,000 = 20 \times 50$ point clouds. A few examples of these point clouds are shown in Figure~\ref{fig_holes_data}. The label of a point cloud is the number of holes in the underlying shape.

\begin{figure}[h]
\centering
\begin{tabular}{lccccc}
\toprule

shapes & 
\includegraphics[width = 0.06\linewidth]{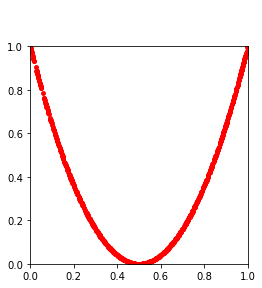} 
\includegraphics[width = 0.06\linewidth]{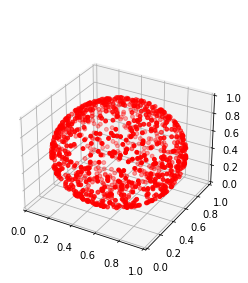} &
\includegraphics[width = 0.06\linewidth]{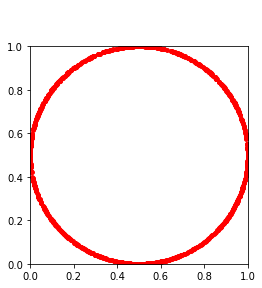} 
\includegraphics[width = 0.06\linewidth]{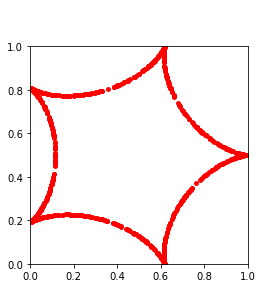} &
\includegraphics[width = 0.06\linewidth]{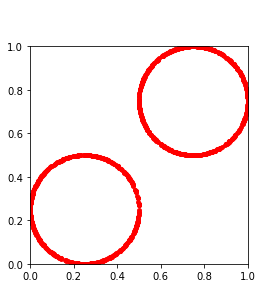} 
\includegraphics[width = 0.06\linewidth]{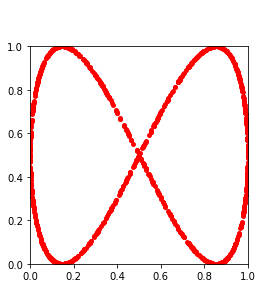} &
\includegraphics[width = 0.06\linewidth]{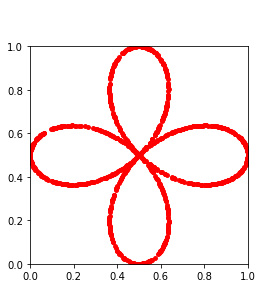} 
\includegraphics[width = 0.06\linewidth]{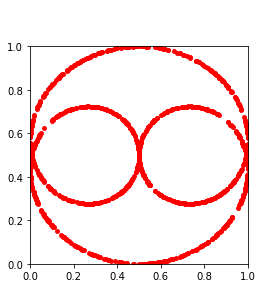} &
\includegraphics[width = 0.06\linewidth]{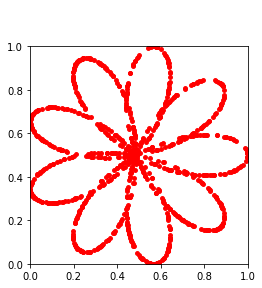} 
\includegraphics[width = 0.06\linewidth]{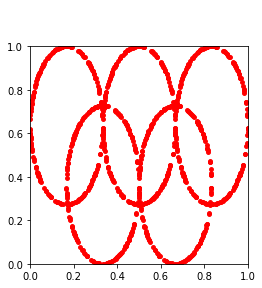} \\

&
\includegraphics[width = 0.06\linewidth]{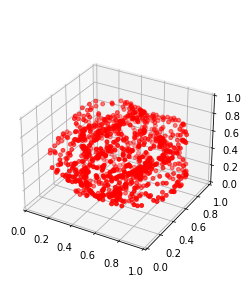} 
\includegraphics[width = 0.06\linewidth]{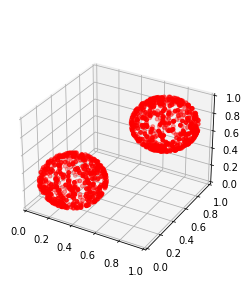} &
\includegraphics[width = 0.06\linewidth]{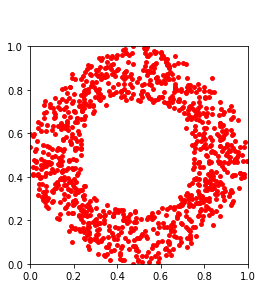} 
\includegraphics[width = 0.06\linewidth]{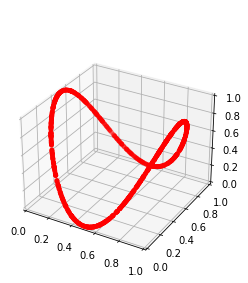} & 
\includegraphics[width = 0.06\linewidth]{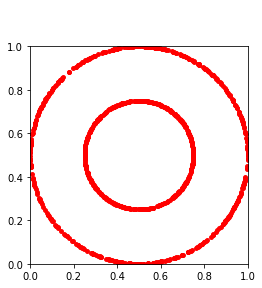} 
\includegraphics[width = 0.06\linewidth]{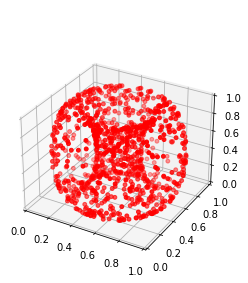} &
\includegraphics[width = 0.06\linewidth]{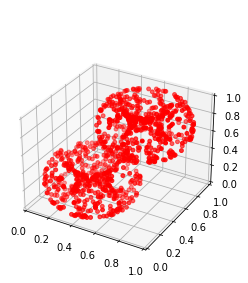}
\includegraphics[width = 0.06\linewidth]{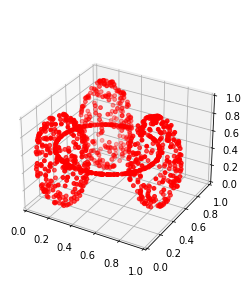} &
\includegraphics[width = 0.06\linewidth]{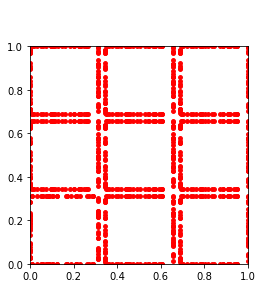} 
\includegraphics[width = 0.06\linewidth]{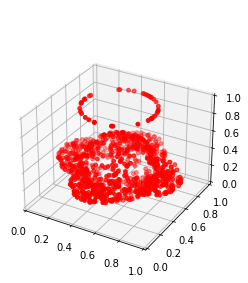} \\

\midrule
number of holes & 0 & 1 & 2 & 4 & 9 \\
\bottomrule
\end{tabular}

\caption{Number of holes data set.}
\label{fig_holes_data}
\end{figure}

\paragraph{PH pipeline} For each point cloud $X$ and scale $r \in \mathbb{R},$ we consider its alpha complex. We will look into scenarios in which data contains noise, and therefore, instead of the standard distance function, we consider Distance-to-Measure (DTM) as the filtration function. We extract $1$-dimensional PDs, that are then transformed to PIs, PLs, or a simple signature consisting  only of lifespans $l = d - b$ of the 10 most persisting cycles (as there are at maximum 9 holes of interest in the given data set)\footnote{Although a sphere has no $1$-dimensional holes, its PD might consist of many short intervals which correspond to the small holes on the surface. In addition, in the presence of noise, additional small holes might appear for any point cloud. Hence, it is not a good idea to consider the cardinality $|\text{PD}|$ of the PD as the signature.}, and classified with an SVM. We consider a ``PH simple" pipeline, which relies on the 10 lifespans, and a ``PH" pipeline wherein grid search is employed to choose the best out of the three aforementioned persistent signatures and the values of their parameters. For more details on the pipeline, see Appendix~\ref{app_exp_pipelines} and Appendix~\ref{app_holes_pipeline}.

\paragraph{Results} We investigate the clean and robust test accuracy under four types of transformations (translation, rotation, stretch, shear) and two types of noise (Gaussian noise, outliers). For more details on these transformations, see Appendix~\ref{app_holes_transformations}. We train the classifier on $80\%$ of the original point clouds, and test on the remaining $20\%$ of the data either in its original form or subject to transformations and noise. The results reported in Figure~\ref{fig_holes_results} (with detailed results across multiple runs in Appendix~\ref{app_holes_performance}) show that PH obtains very good test accuracy on this classification task, even in the presence of affine transformations or noise, outperforming baseline machine- and deep-learning techniques.\footnote{Interestingly, although PointNet was designed with the idea to be invariant to affine transformations, it performs poorly when the test data is translated or rotated (and this is consistent with some previous results \cite{zhang2019rotation, zhang2020learning, le2021geometric, xiao2021triangle, zhang2021revisiting, wang2022rotation, zhao2022rotation}), or when it contains outliers. Traditional neural networks perform very poorly, which might not come as a big surprise, since it was recently demonstrated that they transform topologically complicated data into topologically simple one as it passes through the layers, vastly reducing the Betti numbers (nearly always even reducing to their lowest possible values: $\beta_k = 0$ for $k>0$, and $\beta_0=1)$ \cite{naitzat2020topology}. Of course, the choice of activation function and hyperparameters might have an important influence on performance \cite{naitzat2020topology}.} We reach a similar conclusion in case of limited training data and computational resources (Appendix~\ref{app_holes_training_curves}, Appendix~\ref{app_holes_comp}). Firstly, the evolution of the test accuracy across different amounts of training data demonstrates that PH achieves good performance for a small number of training point clouds, which is not the case for other pipelines. Secondly, although the hyperparameter tuning of the PH pipeline does take time (as we consider a wide range of parameters for the different persistence signatures), it is still less than for PointNet. Moreover, Figure~\ref{fig_holes_results} shows that even the simple PH pipeline, where the SVM is used directly on the lifespans of the 10 most persisting cycles (without any tuning of PH-related parameters) performs well. 

\begin{figure}[h]
\centering
\includegraphics[width = \linewidth]{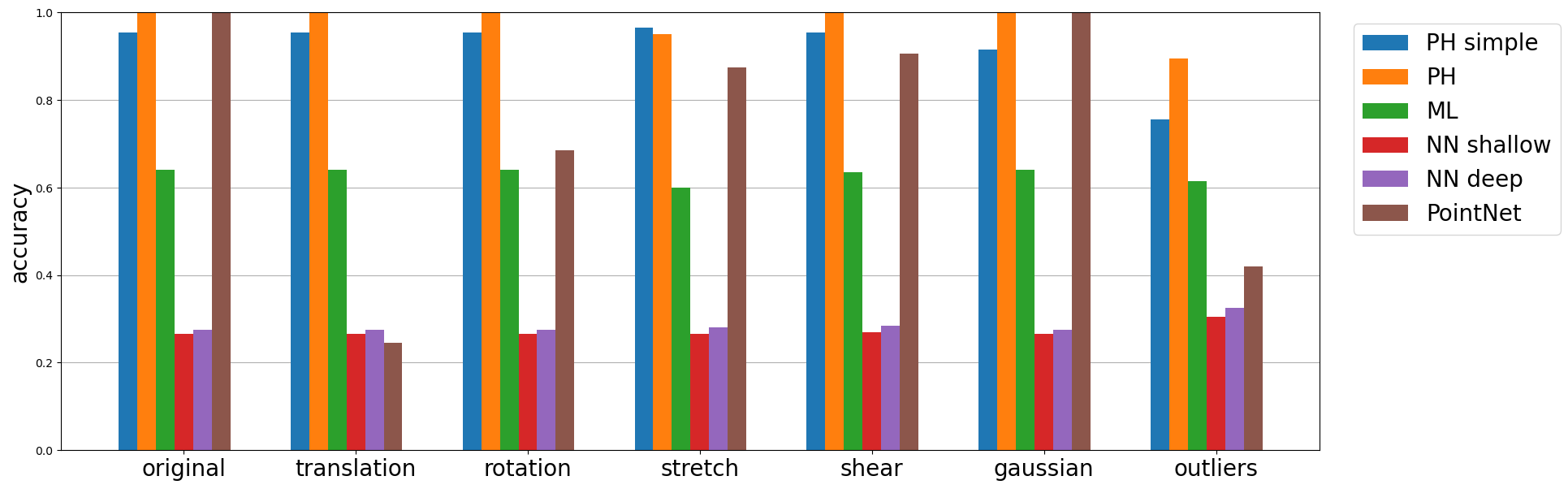}
\caption{Persistent homology can detect the number of holes.}
\label{fig_holes_results}
\end{figure}

\section{Curvature}
\label{section_curvature}

This section considers a regression task to predict the curvature of an underlying shape based on a point cloud sample. Estimating curvature-related quantities is of prime importance in computer vision, computer graphics, computer-aided design or computational geometry, e.g., for surface segmentation, surface smoothing or denoising, surface reconstruction, and shape design \cite{cazals2005estimating}. For continuous surfaces, normals and curvature are fundamental geometric notions which uniquely characterize local geometry up to rigid transformations \cite{guerrero2018pcpnet}. Recently, it has been shown that, using PH, curvature can be both recovered in theory (Appendix~\ref{app_thm_curvature}), and effectively estimated in practice \cite{bubenik2020persistent}. We run a similar experiment, evaluating the PH pipeline against our baselines, and also taking a closer look into the importance of short intervals.

\paragraph{Data} A balanced data set is generated in the same way as in \cite{bubenik2020persistent}: We consider unit disks $D_\kappa$ on surfaces of constant curvature $\kappa$: (i) $\kappa=0,$ Euclidean plane, (ii) $\kappa>0,$ sphere with radius $1/\sqrt{\kappa},$ and (iii) $\kappa<0,$ Poincaré disk model of the hyperbolic plane. Curvature $\kappa$ lies in the interval $[-2, 2]$ so that a disk with radius one can be embedded on the upper hemisphere of a sphere with constant curvature $\kappa$ (as it spherical cap). For each $\kappa \in \{-2, -1.96, \dots, -0.04, 0, 0.04, \dots, 1.96\},$ we construct $10$ point clouds by sampling $500$ points from the unit disk $D_\kappa$ with the probability measure proportional to the surface area measure \cite[Section 2.7, Section 4.1]{bubenik2015statistical}. A few examples with $\kappa \in \{-2, -1, -0.1, 0, 0.1, 1, 2\}$ are illustrated in Figure~\ref{fig_curvature_data}\footnote{The unit disks with negative curvature are here visualized on hyperbolic paraboloids. These saddle surfaces have non-constant curvature, but they locally resemble the hyperbolic plane.}.These $101 \times 10 = 1\,010$ point clouds are considered as the training data, whereas the test data set is built in a similar way for $100$ values of $\kappa$ chosen uniformly at random from $[-2, 2]$. The label of a point cloud is the curvature $\kappa$ of the underlying disk $D_\kappa$. Note that all these disks are homoemorphic: they are contractible, so that their homology is trivial and homology is thus unable to distinguish between them \cite{bubenik2020persistent}.

\begin{figure}[h]
\centering

\begin{tabular}{lccccccc}
\toprule

shapes &
\includegraphics[width = 0.095\linewidth]{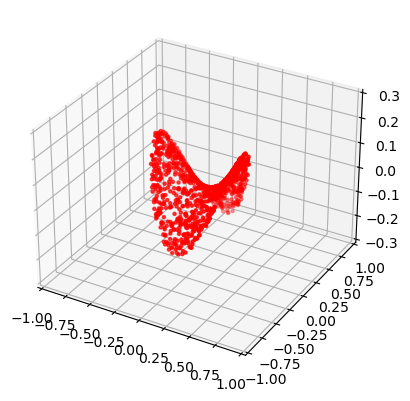} &
\includegraphics[width = 0.095\linewidth]{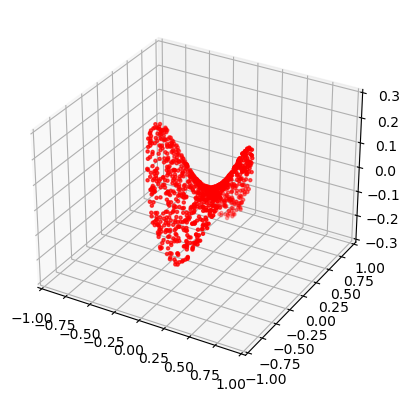} &
\includegraphics[width = 0.095\linewidth]{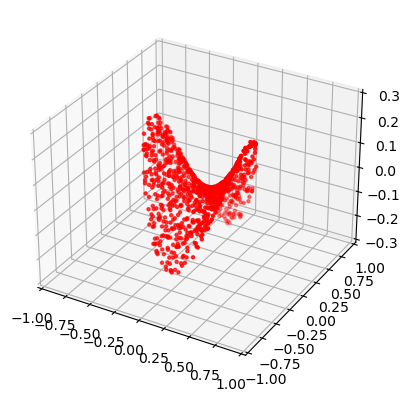} &
\includegraphics[width = 0.095\linewidth]{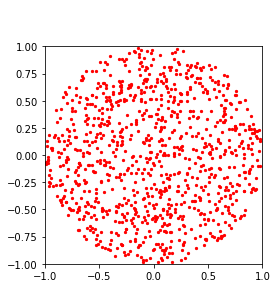} &
\includegraphics[width = 0.095\linewidth]{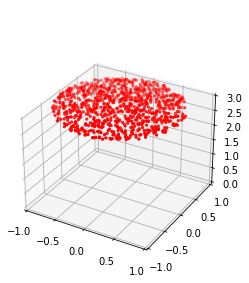} &
\includegraphics[width = 0.095\linewidth]{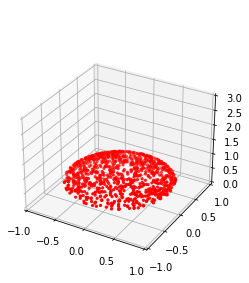} &
\includegraphics[width = 0.095\linewidth]{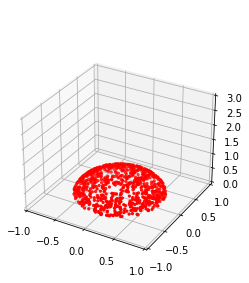} \\

\midrule
curvature & -2.00 & -1.00 & -0.10 & 0.00 & 0.10 & 1.00 &2.00 \\

\bottomrule
\end{tabular}

\caption{Curvature data set.}
\label{fig_curvature_data}
\end{figure}

\paragraph{PH pipeline} For each point cloud $X,$ we first calculate the suitable matrix of pairwise distances between the point-cloud points: hyperbolic, Euclidean or spherical, respectively for negative, zero and positive curvature \cite[Section 2.7]{bubenik2015statistical}. The input for PH is the filtered Vietoris-Rips simplicial complex.\footnote{Alpha complex is faster to compute,
but involves Delaunay triangulation, whose unique existence is guaranteed only in Euclidean spaces. To calculate PH, we rely on the Ripser software \cite{tralie2018ripser}, which is at the time the most efficient library to compute PH with Vietoris-Rips complex  \cite{otter2017roadmap}.} We extract $0$- and $1$-dimensional PDs, which are then transformed into PIs, PLs or lifespans, to be fed to SVM. More details on the pipeline are provided in Appendix~\ref{app_exp_pipelines} and Appendix~\ref{app_curvature_pipeline}.

\paragraph{Results} Figure~\ref{fig_curvature_results} shows the mean squared errors for the PH and other pipelines, together with their regression lines, with detailed results across multiple runs listed in Appendix~\ref{app_curvature_performance}. The results show that PH indeed detects curvature, outperforming other methods.\footnote{Simple machine and deep learning techniques are able to differentiate between positive and negative curvature, but perform poorly in predicting the actual value of the curvature of the underlying surface.} Next to the PH pipeline discussed above, wherein a grid search is used to tune the parameters (Appendix~\ref{app_exp_pipelines}), we also consider SVM on the lists of lifespans of all persistence intervals (PH simple), and SVM only on the $10$ longest lifespans (PH simple $10$), in order to investigate if all persistence intervals contribute to prediction. We see that the performance drops if we only focus on the longest $10$ intervals, so that the many short intervals together capture the geometry of interest for this problem. Similarly as in Section~\ref{section_holes}, the grid search across the different parameters for persistence signatures does take time (Appendix~\ref{app_curvature_comp}), but Figure~\ref{fig_curvature_results} shows that SVM on a simple signature of all (0-dimensional) lifespans performs well. We highlight that the data used here, as was the data in Bubenik's work \cite{bubenik2020persistent}, is sampled from surfaces with \emph{constant} curvature. In future work it would be interesting to conduct similar experiments on shapes with non-constant curvature.

\begin{figure}[h]
\centering
\begin{tabular}{ccccc}
0-dim PH simple & 0-dim PH simple 10 & 0-dim PH & ML & PointNet \\
\includegraphics[width = 0.165\linewidth]{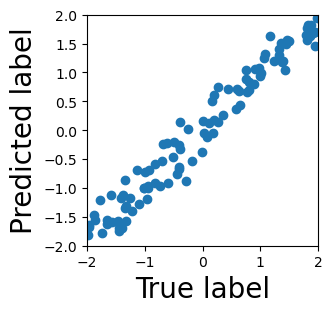} & 
\includegraphics[width = 0.165\linewidth]{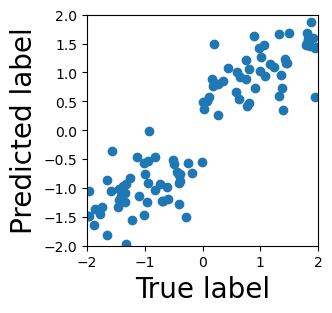} & 
\includegraphics[width = 0.165\linewidth]{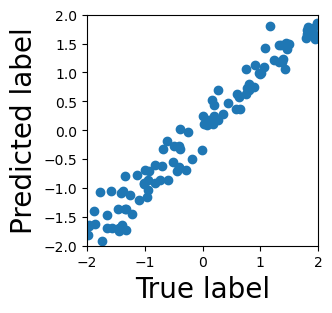} & 
\includegraphics[width = 0.165\linewidth]{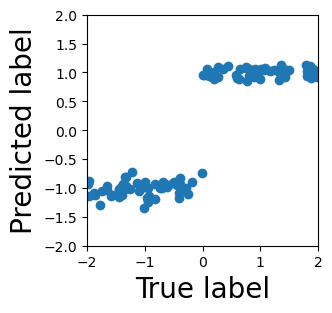} & 
\includegraphics[width = 0.165\linewidth]{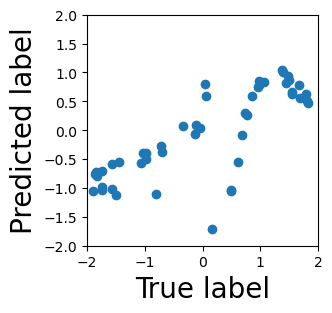} \\
MSE = 0.06 & MSE = 0.21 & MSE = 0.08 & MSE = 0.34 & MSE = 578.28 \\
\\
\\
1-dim PH simple & 1-dim PH simple 10 & 1-dim PH & NN shallow & NN deep \\
\includegraphics[width = 0.165\linewidth]{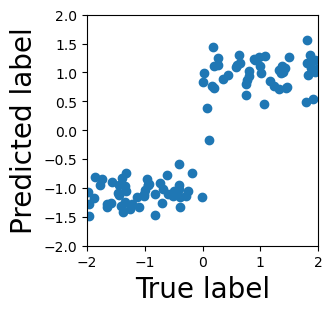} & 
\includegraphics[width = 0.165\linewidth]{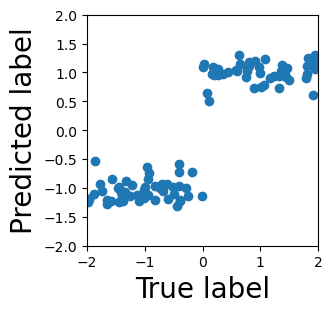} & 
\includegraphics[width = 0.165\linewidth]{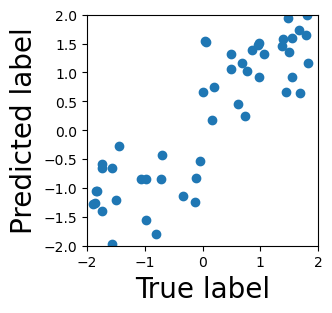} & 
\includegraphics[width = 0.165\linewidth]{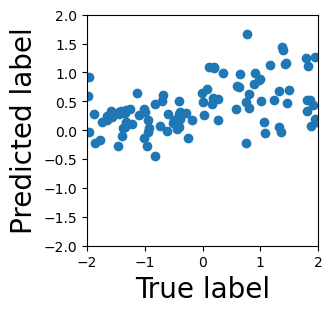} & 
\includegraphics[width = 0.165\linewidth]{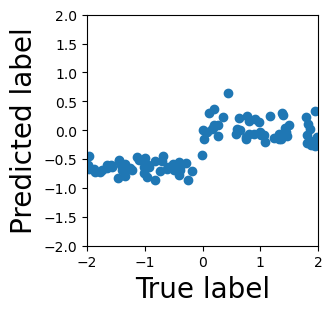} \\
MSE = 0.34 & MSE = 0.29 & MSE = 0.18 & MSE = 0.66 & MSE = 0.43 \\
\end{tabular}
\caption{Persistent homology can detect curvature.}
\label{fig_curvature_results}
\end{figure}

\section{Convexity}
\label{section_convexity}

In this section, we consider the binary classification task that consists of detecting whether a point cloud is sampled from a convex set. Convexity is a fundamental concept in geometry \cite{crombez2019efficient}, which plays an important role in learning, optimization \cite{berman2019testing}, numerical analysis, statistics, information theory, and economics \cite{ronse1989bibliography}. Furthermore, points of convexities and concavities have been demonstrated as crucial for human perception of shapes across many experiments \cite{schmidtmann2015shape}.

To the best of our knowledge, prior to our work PH has not been employed to analyze convexity, and it is a task for which PH's effectiveness might seem surprising. In the first decade after the introduction of PH, it was seen primarily as the descriptor of global topology. Recently, there have been many discussions and greater understanding that PH also captures local geometry  \cite{adams2021topology}. However, it is still suggested that the long persistence intervals capture topology (as was the case with the detection of holes in Section~\ref{section_holes}), and many---even too many for the human eye to count---short persistence intervals capture geometrical properties (as was the case with curvature prediction in Section~\ref{section_curvature}). However, as we show in Theorem \ref{thm_convexity} (proof in Appendix~\ref{app_thm_convexity}) and as our experiments suggest, it is a single, and the second-longest persistence interval that enables us to detect concavity. A crucial ingredient in our result is the introduction of tubular filtrations (Definition~\ref{def_tubular}), which, to the best of our knowledge, are a novel contribution to the TDA literature (details in Appendix~\ref{app_thm_convexity}).

\begin{definition}
\label{def_tubular}
Given a line $\alpha\subset \mathbb{R}^d$, we define the {\bf tubular function with respect to $\alpha$} as follows:
\begin{alignat}{3}
\tau_\alpha \colon & \notag \mathbb{R}^d && \notag \to \mathbb{R} \\ 
& x && \notag \mapsto \dist(x, \alpha) \, ,
\end{alignat}
where $\dist(x, \alpha)$ is the distance of the point $x$ from the line $\alpha$. Given $X \subset \mathbb{R}^d$ and a line $\alpha,$ we are interested in studying the sublevel sets of $\tau_\alpha,$ i.e., the subsets of $X$ consisting of points within a specific distance from the line. We define
\[
X_{\tau_\alpha, r} =\{x\in X  \mid \tau_\alpha(x) \leq r\}\ =\{x\in X  \mid \dist(x,\alpha)\leq r\}\, .
\]
We call $\{X_{\tau_\alpha,r}\}_{r\in \mathbb{R}_{\geq 0}}$ the {\bf tubular filtration with respect to $\alpha$}.
\end{definition}

\begin{theorem} 
\label{thm_convexity} 
Let $X\subset \mathbb{R}^d$ be triangulizable. We have that $X$ is convex if and only if for every line $\alpha$ in $\mathbb{R}^d$ the persistence diagram in degree $0$ with respect to the tubular filtration $\{X_{\tau_\alpha,r}\}_{r\in \mathbb{R}_{\geq 0}}$ contains exactly one interval. 
\end{theorem}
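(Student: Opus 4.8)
The plan is to prove the two directions of the biconditional separately, exploiting the fact that the degree-$0$ persistence diagram of a filtration counts connected components as they appear and merge. Recall that the number of intervals in the degree-$0$ persistence diagram of $\{X_{\tau_\alpha,r}\}_{r\geq 0}$ that are alive at scale $r$ equals $\beta_0(X_{\tau_\alpha,r})$, the number of path-components of the sublevel set $X_{\tau_\alpha,r}$; moreover the total number of intervals in the diagram equals the number of path-components of $X = \lim_{r\to\infty} X_{\tau_\alpha, r}$ plus the total number of ``merge'' events that occur along the way. So the diagram has exactly one interval if and only if $X$ is connected \emph{and} for every $r$ the sublevel set $X_{\tau_\alpha,r}$ is connected (equivalently, $\beta_0(X_{\tau_\alpha,r}) \leq 1$ for all $r$, with equality eventually). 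Triangulizability is used here to guarantee that the filtration is tame, so the persistence diagram is well-defined and these combinatorial statements are rigorous.

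For the forward direction, suppose $X$ is convex. First, $X$ is connected, so the diagram is nonempty with exactly one interval that never dies. It remains to show each sublevel set $X_{\tau_\alpha, r}$ is connected. The key geometric observation is that the sublevel set $X_{\tau_\alpha,r} = \{x \in X \mid \dist(x,\alpha) \leq r\}$ is the intersection of $X$ with the closed ``slab'' (tube) $S_{\alpha,r} = \{x \in \mathbb{R}^d \mid \dist(x,\alpha)\leq r\}$, and a slab around a line is convex. The intersection of two convex sets is convex, hence connected. Therefore $\beta_0(X_{\tau_\alpha,r}) \leq 1$ for all $r$ and all lines $\alpha$, so no merges occur and the diagram contains exactly one interval.

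For the converse, I would argue by contrapositive: assume $X$ is triangulizable but not convex, and produce a line $\alpha$ whose tubular filtration has a degree-$0$ diagram with at least two intervals, i.e., for which some sublevel set $X_{\tau_\alpha,r}$ is disconnected (while being nonempty for large $r$, so a genuine merge occurs, creating a second finite-length interval). Since $X$ is not convex, there exist points $p, q \in X$ and a point $z$ on the segment $[p,q]$ with $z \notin X$. The natural candidate is to take $\alpha$ to be the line through $p$ and $q$ (or a line parallel and very close to it). Then $p$ and $q$ lie on $\alpha$ itself, so they enter the sublevel set at $r = 0$; I want to choose $\alpha$ (a small perturbation or translation of the line $pq$) and a radius $r$ small enough that the slab $S_{\alpha,r}$ still misses a ``gap'' region separating $p$ from $q$ inside $X$, yet $p$ and $q$ are both in $X_{\tau_\alpha,r}$. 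Concretely: because $z \notin X$ and $X$ is closed (being triangulizable, hence a compact polyhedron, or at least a closed triangulable subspace — here I would invoke that triangulizable subsets of $\mathbb{R}^d$ are closed), there is an open ball $B(z,\varepsilon)$ disjoint from $X$; choosing the tube thin enough that $S_{\alpha,r} \cap X$ lies ``to the left'' and ``to the right'' of this ball with no path between them forces $X_{\tau_\alpha,r}$ to be disconnected. Making this separation argument airtight — ruling out that the two pieces reconnect through some far-away part of $X$ within the thin slab — is the main obstacle, and is where I expect to need triangulizability most seriously: one can pass to the finite simplicial structure, note that only finitely many simplices meet a sufficiently thin slab through $z$'s neighborhood, and check the separation combinatorially. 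An alternative, possibly cleaner route for the converse is to use a supporting/separating hyperplane characterization of convexity directly: if $X$ is not convex, its convex hull strictly contains it, one finds a ``dent,'' and a line chosen through the dent witnesses a disconnected sublevel set. Either way, the crux is the careful choice of the witnessing line $\alpha$ and the verification that the thin tube around it genuinely splits $X$ into two components.
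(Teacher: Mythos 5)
Your forward direction is correct and is essentially the paper's argument: the tube $S_{\alpha,r}=\{x\in\mathbb{R}^d\mid \dist(x,\alpha)\leq r\}$ is convex (the paper verifies this by similarity of triangles, you by citing convexity of the slab directly), so $X_{\tau_\alpha,r}=X\cap S_{\alpha,r}$ is an intersection of convex sets, hence convex, hence path-connected, and the degree-$0$ diagram has exactly one interval. Your bookkeeping relating the number of intervals to connectivity of all sublevel sets is also fine, and you correctly locate the role of triangulizability in guaranteeing tameness.

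The converse, however, contains a genuine gap, which you flag yourself: you never actually prove that some sublevel set $X_{\tau_\alpha,r}$ is disconnected, i.e., you do not rule out that $p$ and $q$ reconnect through a distant part of $X$ lying inside the thin tube. The combinatorial route you gesture at (only finitely many simplices meet a thin slab; check separation combinatorially) does not close this: finiteness of the simplices meeting the tube does not by itself preclude a chain of them inside the tube joining the two sides while avoiding $B(z,\varepsilon)$. The missing idea --- the crux of the paper's proof --- is a separation argument by an orthogonal hyperplane. Take $\alpha$ to be exactly the line through $p$ and $q$ (no perturbation is wanted: you need $p,q\in\alpha$ so that they lie in $X_{\tau_\alpha,r}$ already for $r=0$), and let $\beta$ be the hyperplane through the missing point $z$ orthogonal to $\alpha$. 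Since $\beta\perp\alpha$ and $\beta\cap\alpha=\{z\}$, every $w\in\beta$ satisfies $\dist(w,\alpha)=\dist(w,z)$; hence the cross-section of the radius-$r$ tube by $\beta$ is contained in $B(z,\varepsilon)$ for $r\leq\varepsilon$, and $B(z,\varepsilon)$ is disjoint from $X$. Any path in $X_{\tau_\alpha,r}$ from $p$ to $q$ must meet $\beta$, because $\mathbb{R}^d\setminus\beta$ has two components, one containing each endpoint; the crossing point would then lie in $X\cap B(z,\varepsilon)=\emptyset$, a contradiction. This one observation makes the reconnection worry evaporate, with no need for the simplicial structure beyond tameness and the closedness of $X$ used to produce $\varepsilon$. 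Your alternative supporting-hyperplane route is likewise only a sketch and is not needed once this is in place.
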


\paragraph{Data} We construct a balanced data set by sampling $5\,000$ points from convex and concave (nonconvex) shapes in $\mathbb{R}^2$. First, we consider the ``regular" convex shapes of triangle, square, pentagon and circle, and their concave variants, sampling $60$ point clouds of each of the eight shapes, $480$ point clouds in total. Next, we build $480$ ``random" convex and concave shapes, in order to be able to investigate if an algorithm is actually detecting convexity, or only the different basic shapes. A few examples are shown in Figure~\ref{fig_convexity_data}. To construct a random convex shape, we generate $10$ points at random, and then build their convex hull using the quickhull algorithm \cite{virtanen2020scipy}. We construct random concave shapes in a similar way, but instead of the convex hull, we build the alpha shape \cite{edelsbrunner1983shape, edelsbrunner1994three} with the optimized alpha parameter, which gives a finer approximation of a shape from a given set of points. If the alpha shape is convex (i.e., if the alpha shape and its convex hull are the same), we reconstruct the concave shape from scratch. A point cloud has label $1$ if it is sampled from a convex shape, and $0$ otherwise.

\begin{figure}[h]
\centering

\begin{tabular}{lcc}
\toprule

shapes &
\includegraphics[width = 0.09\linewidth]{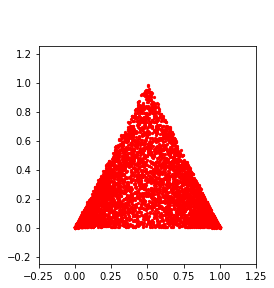} 
\includegraphics[width = 0.09\linewidth]{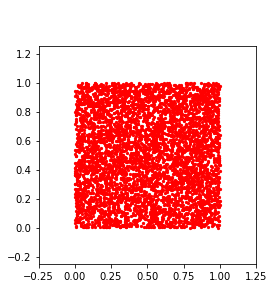} 
\includegraphics[width = 0.09\linewidth]{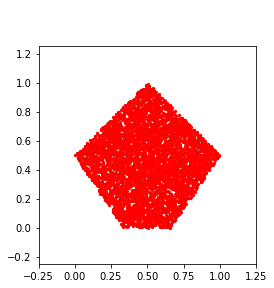}   
\includegraphics[width = 0.09\linewidth]{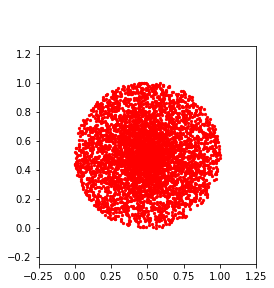} & 
\includegraphics[width = 0.09\linewidth]{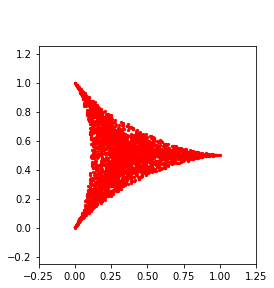}  
\includegraphics[width = 0.09\linewidth]{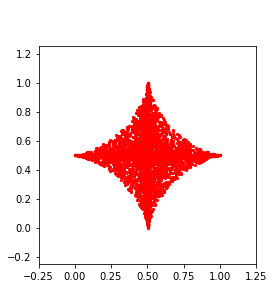}   
\includegraphics[width = 0.09\linewidth]{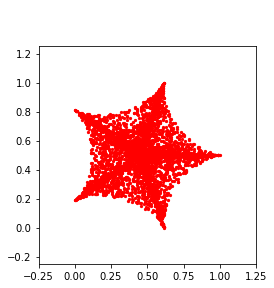}   
\includegraphics[width = 0.09\linewidth]{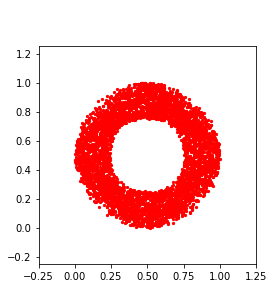} \\

& 
\includegraphics[width = 0.09\linewidth]{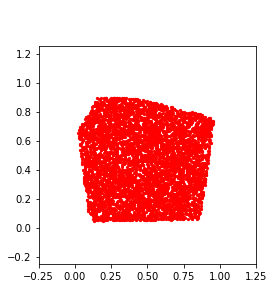} 
\includegraphics[width = 0.09\linewidth]{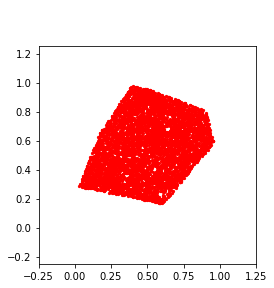}  
\includegraphics[width = 0.09\linewidth]{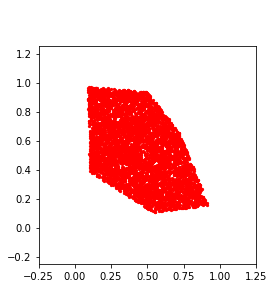}  
\includegraphics[width = 0.09\linewidth]{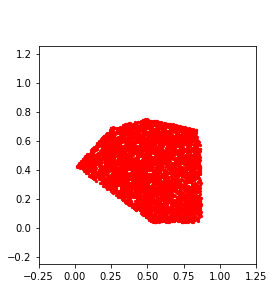} & 
\includegraphics[width = 0.09\linewidth]{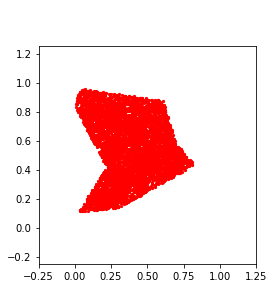}   
\includegraphics[width = 0.09\linewidth]{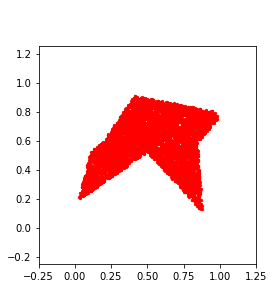}   
\includegraphics[width = 0.09\linewidth]{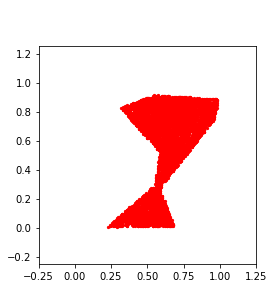} 
\includegraphics[width = 0.09\linewidth]{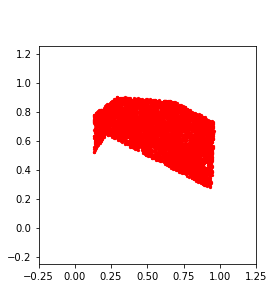} \\

\midrule
convexity & 1 & 0 \\

\bottomrule
\end{tabular}

\caption{Convexity data set.}
\label{fig_convexity_data}
\end{figure}

\paragraph{PH pipeline} To build a filtration, we consider cubical complexes filtered by tubular functions that measure the distance of points from a certain line, see Definition \ref{def_tubular} for a precise definition. For a good choice of line, multiple  components would be seen in the filtration of a point cloud sampled from a concave shape, at least for some values $r \in \mathbb{R}$ (see also the illustrations in Appendix~\ref{app_thm_convexity}). For this reason we consider the cubical complex, rather than the standard Vietoris-Rips simplicial complex wherein these separate components could be connected with an edge (for details, see Appendix~\ref{app_convexity_pipeline}). To build an image from the point cloud, we construct a $20 \times 20$ grid and define a pixel as black if it contains any point-cloud points, and white otherwise.

Since sources of concavity can lie anywhere on the point cloud, we consider nine different lines for the tubular filtration function (for a visualization of the pipeline, see Appendix~\ref{app_convexity_pipeline}). For each of the nine lines, we extract 0-dimensional PD, as it captures information about the components. If the point cloud is thus sampled from a convex shape, its PD will only see a single component for any line, whereas there will be multiple components at least for some lines for points clouds sampled from concave shapes. For this reason, for each of the nine lines, we focus our attention only on the lifespan of the second most persisting cycle. We can consider this 9-dimensional vector as our PH signature, but in our experiment choose an even simpler summary: the maximum of these lifespans, since we only care if there are multiple  components \emph{for at least one line}. This scalar could even be used as some measure of a level of concavity of a shape.

\paragraph{Results} As already indicated, to gain some insights into how well the different approaches discriminate convexity from concavity rather than differentiating between the different basic shapes, we look at the classification accuracies under different conditions (Figure~\ref{fig_convexity_results}, with detailed results across multiple runs in Appendix~\ref{app_convexity_performance}). We start with the easiest case, where both the train and test data consist of the simple regular convex and concave shapes (Figure~\ref{fig_convexity_data}, first row), and then proceed to the scenario where both train and test data are random shapes (Figure~\ref{fig_convexity_data}, second row). Next we proceed to out-of-distribution test data, where we train on the regular and test on random shapes, or vice versa. In every case, we train on $400$ and test on $80$ point clouds. The results show that PH is able to detect convexity, surpassing other methods significantly in all scenarios except for PointNet in the scenario on the data set of regular shapes which performs on par. Results reported in Appendix~\ref{app_convexity_comp} show PH is also computationally efficient.

\begin{figure}[h]
\centering
\includegraphics[width = 0.8 \linewidth]{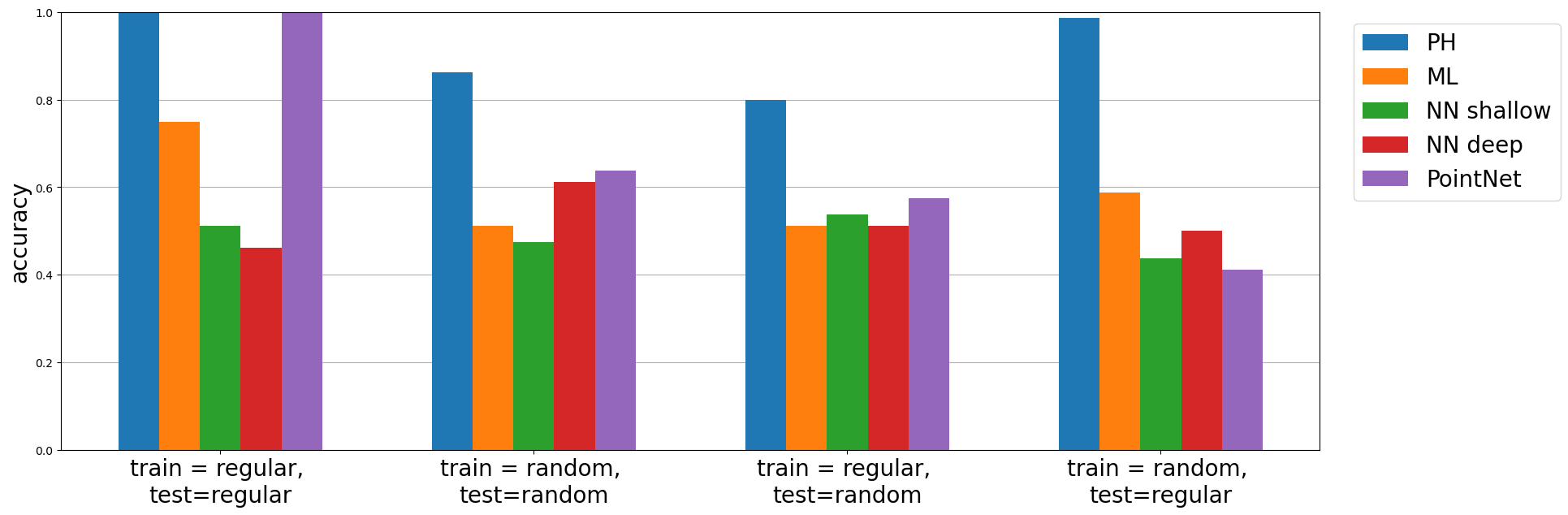}
\caption{Persistent homology can detect convexity.}
\label{fig_convexity_results}
\end{figure}

The PH pipeline above makes a wrong prediction when concavity is barely pronounced, or if it is missed by the selected tubular filtration lines (for details, see Appendix~\ref{app_convexity_wrong}). However, the accuracy of PH can easily be improved simply by considering a finer resolution for the cubical complexes and/or additional tubular filtration lines. The particular PH pipeline summarized in this section would also make a wrong prediction if the data set would include shapes that have small or non-central holes, e.g., a square with a hole in the top left corner. In this case, the accuracy could also be improved by considering a finer cubical complex resolution and by considering additional non-central tubular filtration lines within shapes, or by adding (the maximum lifespan of the) 1-dimensional PH which captures holes. The pipeline is not limited to polygons, or connected shapes, and it can be generalized to surfaces in higher dimensions (Theorem~\ref{thm_convexity}). In Appendix~\ref{app_real_data}, we also consider the real-world data set FLAVIA for which we demonstrate that a PH pipeline is effective in detecting a continuous measure of convexity.

\section{Implications for PH interpretation: topology vs.\ geometry}
\label{section_discussion}

Here we discuss how our results contribute to the important and ongoing discussion about the interpretation of long versus short persistence intervals. When PH was first introduced in the literature, the long intervals were commonly considered as important or ``signal'', and short intervals as irrelevant or ``noise'' \cite{cohen2007stability}. Subsequently the discussion has refined when it was shown that short and medium-length persistence intervals have the most distinguishing power for specific types of applications \cite{bendich2016persistent, stolz2017persistent}. The current understanding is roughly that long intervals reflect the topological signal, and (many) short intervals can help in detecting geometric features \cite{adams2021topology}. We believe that our work brings new insight into this discussion. We give a summary of the implications of our work in this section, and we provide a more detailed discussion in Appendix~\ref{app_guidelines}.

\paragraph{Topology and long persistence} Stability results guarantee that a number of longest persistence intervals reflect the topological signal, i.e., the number of cycles \cite{chazal2008towards}. These theorems give information about the threshold that differentiates between long and short persistence intervals. In Section~\ref{section_holes}, where we focus on the topology of underlying shapes, the experiments demonstrate that this threshold can be learned with simple machine learning techniques. However, it is important to highlight that the distinction between long and short persistence is vague in practice. Indeed, seemingly short persistence intervals capture the topology in Section~\ref{section_holes}, but the second-longest interval is topological noise in Section~\ref{section_convexity}, since every shape in the data set has only a single component (although this second-longest interval captures important \emph{geometric} information, what enabled us to discriminate between convex and concave shapes). These two problems also clearly indicate how the long intervals that encode topology might or might not be irrelevant, depending on the signal of the particular application domain.

\paragraph{Geometry and short persistence} The current understanding is that (many) short persistence intervals detect geometry. Section~\ref{section_curvature} confirms that this indeed can be the case. However, we highlight that \emph{all} cycles can encode geometric information, such as the information about their size (with respect to the Vietoris-Rips and related filtrations, as in Sections~\ref{section_holes} and \ref{section_curvature}) or their position (with respect to the height or tubular filtration, as in Section~\ref{section_convexity}). This further implies that, depending on the application, \emph{any number} of short or intervals of \emph{any persistence} can be important, which was clearly demonstrated in Section~\ref{section_convexity}, where we show that a single interval detects convexity.

\section{Conclusions}
\label{section_conclusions}

\paragraph{Main contribution} The goal of this work is to gain a better understanding of the topological and geometric features that can be captured with persistent homology. We focus on the detection of number of holes (Section \ref{section_holes}), curvature (Section \ref{section_curvature}), and convexity (Section \ref{section_convexity}). Theoretical evidence for the first two classes of problems has been established in the literature, and we prove a new result that guarantees that PH can detect convexity (Theorem~\ref{thm_convexity}). We also experimentally demonstrate that PH can solve all three problems for synthetic point clouds in $\mathbb{R}^2$ and $\mathbb{R}^3,$ outperforming a few baselines. This is true even when there is limited training data and computational resources, and for noisy or out of distribution test data. For convexity detection, we also show the effectiveness of PH in a real world plant morphology application.

\paragraph{Relevance} Firstly, the findings point the way to further advances in utilizing the potential of PH in applications: we can expect PH to be successful for classification or regression problems where the data classes differ with respect to number of holes, curvature and/or convexity. Detailed guidelines are discussed in Appendix~\ref{app_guidelines}. Due to the crucial role of shape classification in understanding and recognizing physical structures and objects, image processing and computer vision \cite{lim2010shape}, our results demonstrate that PH can---to borrow the words from Wigner \cite{wigner1960unreasonable}---``remain valid in future research, and extend, to our pleasure", and lesser bafflement, to a variety of applications. Secondly, the results advance the discussion about the importance of long and short persistence intervals, and their relationship to topology and geometry (Section~\ref{section_discussion}). Topology is captured by the long intervals, geometry is encoded in all persistence intervals, and any interval can encode the signal in the particular application domain.

\paragraph{Limitations} The results focus on three selected problems and data sets, and it would therefore be interesting to consider other tasks. In addition, we do not have an extensive comparison of the state-of-the-art for the given problems. Our work seeks to understand if PH is successful for a selected set of tasks by benchmarking it against some well-performing methods.

\paragraph{Future research} An in-depth analysis of the hypothetical applications discussed in this paper (Appendix~\ref{app_guidelines}) and selected success stories of PH from the literature could further improve our understanding of the topological and geometric information encoded in PH, and the interpretation of persistence intervals of different lengths. Alternative approaches for detection of convexity with PH (relying on higher homological dimensions, or multiparameter persistence) are particularly interesting avenues for further work. Furthermore, even though our results imply that PH features are recommended over baseline models for the three selected classes of problems, they also provide inspiration on how to improve existing learning architectures. Further work could investigate deep learning  models on PH (and standard) features or kernels \cite{hofer2017deep, bruel2019topology, rathore2019autism, yan2021link}, an additional network layer for topological signatures, or PH-based priors, regularization or loss functions \cite{bruel2019topology, chen2019topological, clough2019explicit, hu2019topology, clough2020topological, zhao2020persistence, wong2021persistent}.

\paragraph{Potential negative societal impact} While we recognize that the applications of shape analysis can take many different directions, we do not foresee a direct path of this research to negative societal impacts.

\paragraph{Funding transparency statement} This research was conducted while RT was a Fulbright Visiting Researcher at UCLA. GM acknowledges support from ERC grant 757983, DFG grant 464109215, and NSF-CAREER grant DMS-2145630. NO acknowledges support from the Royal Society, under grant RGS\textbackslash R2\textbackslash 212169.

\bibliographystyle{plain}
\bibliography{tda}

\newpage 
\appendix
\setcounter{theorem}{0}
\setcounter{definition}{0}

\section*{Appendix}

The appendix is organized into the following sections. 
\begin{itemize}[leftmargin=*]
\item Appendix~\ref{app_thms} Theoretical results
\item Appendix~\ref{app_exp}: Experimental details; pipelines, training and hyperparameter tuning 
\item Appendix~\ref{app_holes}: Additional experimental details for number of holes 
\item Appendix~\ref{app_curvature}: Additional experimental details for curvature 
\item Appendix~\ref{app_convexity}: Additional experimental details for convexity 
\item Appendix~\ref{app_guidelines}: Guidelines for persistent homology in applications; long and short persistence intervals \item Appendix~\ref{app_real_data}: Persistent homology detects convexity in FLAVIA data set
\end{itemize}

\section{Theoretical results}
\label{app_thms}

In this section, we provide the proof of our Theorem~\ref{thm_convexity} that guarantees that PH can be used to detect convexity. We then formulate Theorem~\ref{thm_holes} and Theorem~\ref{thm_curvature} from the literature, that summarize known theoretical guarantees that PH can detect the number of holes and curvature. At the end of the section, we also discuss some known results about the theoretical computational complexity of PH.

\subsection{Convexity}
\label{app_thm_convexity}

In our pipeline for the detection of convexity we consider tubular filtrations, akin to  the concept of tubular neighborhoods in differential topology. Given a subspace $X$ of $\mathbb{R}^d$, and a line $\alpha\subset \mathbb{R}^d,$ we consider all the points in $X$ that are within a specific distance $r$ from the line. By varying $r$, we then obtain a filtration of $X$. We note that while a  tubular neighborhood is defined with respect to any curve, here instead we focus on the special case of (closed) neighborhoods with respect to a line.

\begin{definition}[Tubular filtration]
Given a line $\alpha \subset \mathbb{R}^d$, we define the {\bf tubular function with respect to $\alpha$} as follows:
\begin{alignat}{3}
\tau_\alpha \colon & \notag \mathbb{R}^d && \notag\to \mathbb{R} \\ 
& x && \notag \mapsto \dist(x,\alpha) \, ,
\end{alignat}
where $\dist(x,\alpha)$ is the distance of the point $x$ from the line $\alpha$. Given $X \subset \mathbb{R}^d$ and a line $\alpha \subset \mathbb{R}^d$, we are interested in studying the sublevel sets of $\tau_\alpha,$ i.e., the subsets of $X$ consisting of points within a specific distance from the line. We define
\[
X_{\tau_\alpha, r} =\{x\in X  \mid \tau_\alpha(x) \leq r\}\ =\{x\in X  \mid \dist(x,\alpha)\leq r\}\, .
\]
We call $\{X_{\tau_\alpha,r}\}_{r\in \mathbb{R}_{\geq 0}}$ the {\bf tubular filtration with respect to $\alpha$}.
\end{definition}

We first formulate and prove our main theorem below, and then discuss the need for tubular filtration in Remark~\ref{remark_tubular}.

\begin{remark}[Different notions of components]
In the proof of Theorem \ref{thm_convexity}, we need to work with path-connected components. We note that while in the main part of this manuscript we always only use the term ``components'', more precisely one would need to distinguish between ``connected components'' and ``path-connected components''. For the purposes of the majority of the spaces that we consider in this work, the two notions are equivalent. Thus, we often simply refer to these as ``components''. 
\end{remark}

\begin{theorem}[Convexity] Let $X\subset \mathbb{R}^d$ be triangulizable\footnote{Namely, there exists a simplicial complex $K$ and a homeomorphism $h\colon |K|\to X$ from the geometric realization of $K$ to X.}. We have that $X$ is convex if and only if for every line $\alpha$ in $\mathbb{R}^d$ the persistence diagram in degree $0$ with respect to the tubular filtration $\{X_{\tau_\alpha,r}\}_{r\in \mathbb{R}_{\geq 0}}$ contains exactly one interval. \end{theorem}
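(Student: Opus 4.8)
The plan is to prove the two implications separately, using the characterization of convexity via the behavior of sublevel sets of the tubular function. For the forward direction, suppose $X$ is convex. Fix a line $\alpha \subset \mathbb{R}^d$ and a threshold $r \geq 0$; I claim $X_{\tau_\alpha, r}$ is convex, hence path-connected (or empty, which cannot happen for $r$ large enough, and for small $r$ we will see why there is still only one interval). The set $\{x \in \mathbb{R}^d \mid \dist(x,\alpha) \leq r\}$ is a ``slab'' (the $r$-neighborhood of a line), and it is convex: the distance-to-a-line function is convex in $x$, so its sublevel sets are convex. Then $X_{\tau_\alpha, r} = X \cap \{x \mid \dist(x,\alpha)\leq r\}$ is an intersection of two convex sets, hence convex, hence path-connected whenever nonempty. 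As $r$ increases from $0$, the family $\{X_{\tau_\alpha,r}\}$ is a nested family of path-connected (or empty) spaces, so exactly one connected component is ever born and it never dies; thus the degree-$0$ persistence diagram has exactly one interval. (One should be slightly careful about whether $X_{\tau_\alpha,0} = X \cap \alpha$ is empty or a point/segment; in all cases the first nonempty sublevel set is connected and no new components appear, so the conclusion holds. Triangulizability ensures the persistent homology is well-defined and tame.)

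For the converse, I would argue by contrapositive: if $X$ is not convex, I must exhibit a line $\alpha$ whose tubular filtration produces a degree-$0$ persistence diagram with at least two intervals. Since $X$ is not convex, there exist points $p, q \in X$ such that the segment $[p,q]$ is not contained in $X$; pick a point $z \in [p,q] \setminus X$ in the (relative) interior of the segment. The natural candidate is to take $\alpha$ to be the line through $p$ and $q$ (so $z \in \alpha$). Then for $r$ slightly larger than $0$, the slab $\{x \mid \dist(x,\alpha)\leq r\}$ is a thin tube around $\alpha$; $p$ and $q$ lie in $X_{\tau_\alpha,r}$, but $z$ does not lie in $X$ at all, and by continuity a whole neighborhood of $z$ in the tube is missing from $X$ (since $X$ is closed — triangulizable subsets of $\mathbb{R}^d$ are closed). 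The key point is to show that for small enough $r$, the points $p$ and $q$ lie in \emph{different} path-components of $X_{\tau_\alpha,r}$: any path in $X_{\tau_\alpha,r}$ from $p$ to $q$ stays within distance $r$ of $\alpha$, and one wants to conclude it must pass near $z$ and hence leave $X$. This is where I expect the main obstacle: a path in the thin tube from $p$ to $q$ need not pass through $z$ — it could go ``around'' within the tube. So I would instead choose $\alpha$ more cleverly, or slice further: the right move is probably to additionally use the freedom in choosing $\alpha$ among all lines, and to consider the two ``sides'' of the missing region. Concretely, one expects that a suitable line $\alpha$ (e.g.\ perturbing the $pq$ line, or taking a line such that the orthogonal projection of the hole $[p,q]\setminus X$ onto the slab direction genuinely separates $p$ from $q$) forces any connecting path to exit $X$.

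To make the separation argument rigorous I would use a compactness/covering argument: because $X$ is triangulizable it is closed and locally nice, so $[p,q]\setminus X$ contains an open interval $(z_1, z_2)$ with $z_1, z_2$ possibly in $X$; there is an $\varepsilon > 0$ such that the $\varepsilon$-tube around the sub-segment $[z_1', z_2']$ (a slightly shrunk interval) is disjoint from $X$. Taking $r < \varepsilon$ and $\alpha$ the line through $p,q$, the set $X_{\tau_\alpha, r}$ is contained in the tube and is disjoint from a ``full cross-sectional disk'' of that tube at the midpoint of $[z_1,z_2]$ — this disk being a separating hypersurface of the tube means $X_{\tau_\alpha,r}$ is split into the part on the $p$-side and the part on the $q$-side, with $p$ and $q$ in different pieces. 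Hence at least two components are born, i.e.\ the degree-$0$ persistence diagram has at least two intervals. (If the tube geometry still allows going around, one reduces $d$-dimensional tube to the relevant $2$-plane containing $\alpha$, or invokes that distance-to-line sublevel sets are themselves convex ``cylinders'' $D^{d-1}\times\mathbb{R}$ whose cross-sections are connected, forcing any path to cross the separating disk.) Combining both directions gives the equivalence. The delicate part throughout is handling the geometry of path-components inside the tube and ensuring the ``hole'' in $X$ genuinely disconnects a sublevel set rather than merely creating a higher-dimensional cycle; I would spend most of the write-up on that step, and I suspect the cleanest route is to fix the $2$-plane spanned by $\alpha$ and a direction transverse to the missing segment and argue there, then lift back to $\mathbb{R}^d$.
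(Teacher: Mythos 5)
Your proposal is correct and takes essentially the same route as the paper's proof: the forward direction shows each sublevel set is the intersection of $X$ with a convex slab (the paper checks the slab's convexity via similar triangles, you via convexity of $\dist(\cdot,\alpha)$), and your converse --- take $\alpha$ through $p$ and $q$, take $r$ smaller than the radius of a ball around the missing point $z$ that avoids the closed set $X$, and separate with the cross-sectional disk at $z$ --- is exactly the paper's argument, which phrases the separator as the hyperplane through $z$ orthogonal to $\alpha$. The ``path could go around'' worry you raise is already fully resolved by that separation step, so the extra hedging about perturbing $\alpha$ or reducing to a $2$-plane is unnecessary.
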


\begin{proof}

We first note that the persistence diagram in degree $0$ of $\{X_{\tau_\alpha,r}\}_{r\in \mathbb{R}_{\geq 0}}$ exists, since the  singular homology in degree $0$ (and with coefficients in a field) of $X_{\tau_\alpha,r}$ is finite-dimensional for any $r\in \mathbb{R}_{\geq 0}$; the existence of the persistence diagram then follows from \cite[Theorem 2.8]{chazal2016structure}\footnote{We note that while in this proof we need to consider singular homology, when computing PH in practice one works with either simplicial or cubical homology (Remark~\ref{remark_convexity_practice}). For the types of spaces that we consider in our work, all homology theories are equivalent. See \cite{kaczynski2004computational} for a discussion of the equivalence between simplicial and cubical homology, and \cite[Chapter 2]{hatcher2005algebraic} for the equivalence of singular and simplicial homology.}.

Assume that $X$ is convex. By definition, we have that for all $p_1, p_2\in X$ the straight-line segment between $p_1$ and $p_2$ is contained in $X$. Let $\alpha$ be any line in $\mathbb{R}^d$ and $r \in \mathbb{R}$. By elementary properties of Euclidean spaces (similarity of triangles, see Figure~\ref{fig_convexity_thm_dist_to_line}), we have that if $\dist(p_1,\alpha)\leq r$ and $\dist(p_2,\alpha)\leq r$, then also $\dist(q,\alpha)\leq r$ for any point $q$ on the line segment between $p_1$ and $p_2$. By the definition of tubular function, this means that  $p_1, p_2\in X_{\tau_\alpha,r}$ implies that $q\in X_{\tau_\alpha,r}$. Therefore the straight-line segment between $p_1$ and $p_2$ is contained in $X_{\tau_\alpha,r}$, which means that $X_{\tau_\alpha,r}$ is convex, and thus path-connected. We therefore have that for any line $\alpha$, the persistence diagram in degree $0$ of $\{X_{\tau_\alpha,r}\}_{r\in \mathbb{R}}$ contains a single interval.

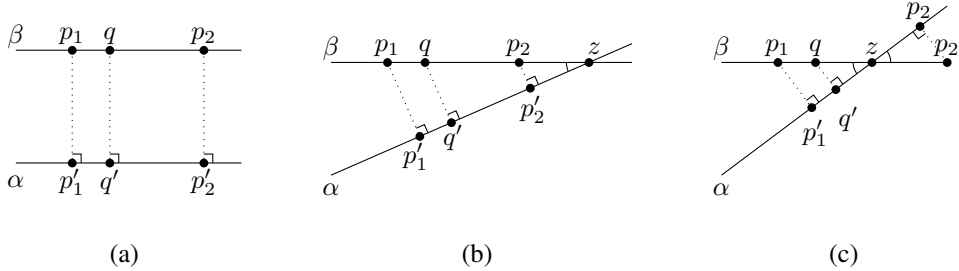
\begin{figure}[h]
\centering
\begin{tabular}{ccccc}

\begin{tikzpicture}[scale=0.25]
\node [scale = 3] (p1) at (3, 8) {.};
\node [scale = 3] (p2) at (10, 8) {.};
\node [scale = 3] (q) at (5, 8) {.};
\draw (0, 8) -- (12, 8);
\node at (3, 8.75) {$p_1$};
\node at (10, 8.75) {$p_2$};
\node at (5, 8.75) {$q$};
\node at (0, 8.75) {$\beta$};

\node (l1) at (0, 2) {};
\node (l2) at (12, 2) {};
\draw (l1.center) -- (l2.center);
\node at (0, 1) {$\alpha$};

\node [scale = 3] (p1') at ($(l1)!(p1)!(l2)$) {.};
\node [scale = 3] (p2') at ($(l1)!(p2)!(l2)$) {.};
\node [scale = 3] (q') at ($(l1)!(q)!(l2)$) {.};
\node at (3, 1) {$p_1'$};
\node at (10, 1) {$p_2'$};
\node at (5, 1) {$q'$};

\draw [dotted] (p1.center) -- (p1'.center);
\draw [dotted] (p2.center) -- (p2'.center);
\draw [dotted] (q.center) -- (q'.center);

\draw [right angle symbol = {l1}{l2}{p1}];
\draw [right angle symbol = {l1}{l2}{p2}];
\draw [right angle symbol = {l1}{l2}{q}];
\end{tikzpicture} &&

\begin{tikzpicture}[scale=0.25]
\node [scale = 3] (p1) at (3, 8) {.};
\node [scale = 3] (p2) at (10, 8) {.};
\node [scale = 3] (q) at (5, 8) {.};
\draw (0, 8) -- (16, 8);
\node at (3, 8.75) {$p_1$};
\node at (10, 8.75) {$p_2$};
\node at (5, 8.75) {$q$};
\node at (0, 8.75) {$\beta$};

\node (l1) at (0, 2) {};
\node (l2) at (16, 9) {};
\draw (l1.center) -- (l2.center);
\node at (0, 1.25) {$\alpha$};

\node [scale = 3] (z) at (intersection of p1--p2 and l1--l2){.};
\node at (14, 8.75) {$z$};

\node [scale = 3] (p1') at ($(l1)!(p1)!(l2)$) {.};
\node [scale = 3] (p2') at ($(l1)!(p2)!(l2)$) {.};
\node [scale = 3] (q') at ($(l1)!(q)!(l2)$) {.};
\node at (4.5, 3) {$p_1'$};
\node at (10.75, 5.5) {$p_2'$};
\node at (6.5, 4) {$q'$};

\draw [dotted] (p1.center) -- (p1'.center);
\draw [dotted] (p2.center) -- (p2'.center);
\draw [dotted] (q.center) -- (q'.center);

\pic [draw, angle radius = 0.3cm] {angle = p1--z--p1'};
\draw [right angle symbol = {l1}{l2}{p1}];
\draw [right angle symbol = {l1}{l2}{p2}];
\draw [right angle symbol = {l1}{l2}{q}];
\end{tikzpicture} &&

\begin{tikzpicture}[scale=0.25]
\node [scale = 3] (p1) at (3, 8) {.};
\node [scale = 3] (p2) at (12, 8) {.};
\node [scale = 3] (q) at (5, 8) {.};
\draw (0, 8) -- (12, 8);
\node at (3, 8.75) {$p_1$};
\node at (12, 8.75) {$p_2$};
\node at (5, 8.75) {$q$};
\node at (0, 8.75) {$\beta$};

\node (l1) at (0, 2) {};
\node (l2) at (12, 11) {};
\draw (l1.center) -- (l2.center);
\node at (0, 1.25) {$\alpha$};

\node [scale = 3] (z) at (intersection of p1--p2 and l1--l2){.};
\node at (8, 8.75) {$z$};

\node [scale = 3] (p1') at ($(l1)!(p1)!(l2)$) {.};
\node [scale = 3] (p2') at ($(l1)!(p2)!(l2)$) {.};
\node [scale = 3] (q') at ($(l1)!(q)!(l2)$) {.};
\node at (5, 4.25) {$p_1'$};
\node at (10.5, 10.75) {$p_2$};
\node at (6.75, 5) {$q'$};

\draw [dotted] (p1.center) -- (p1'.center);
\draw [dotted] (p2.center) -- (p2'.center);
\draw [dotted] (q.center) -- (q'.center);

\pic [draw, angle radius = 0.25cm] {angle = p1--z--p1'};
\pic [draw, angle radius = 0.25cm] {angle = p2--z--p2'};
\draw [right angle symbol = {l1}{l2}{p1}];
\draw [right angle symbol = {l2}{l1}{p2}];
\draw [right angle symbol = {l1}{l2}{q}];
\end{tikzpicture} \\
\\
(a) && (b) && (c) \\

\end{tabular}
\caption{The distance $\dist(p, \alpha)$ from a point $p$ to a line $\alpha$ is defined as the distance between $p$ and the projection $p'$ of $p$ to line $\alpha$ (denoted with dotted lines in the figure). For any two points $p_1, p_2 \in \mathbb{R}^d$, $q \in \mathbb{R}^d$ on a line segment between $p_1$ and $p_2,$ and a line $\alpha$ in $\mathbb{R}^d$, we have that the line $\beta$ passing through $p_1$ and  $p_2$ is either (a) parallel, or (b)-(c) intersects the line $\alpha$ in a point $z = \beta \cap \alpha$. In the former case, per definition of parallel lines, we have that $\dist(p_1, \alpha) = \dist(q, \alpha) = \dist(p_2, \alpha) = \dist(\beta, \alpha).$ In the latter cases, we have a similarity of triangles $\triangle p_1 p_1' z$, $\triangle q q' z$ and $\triangle p_2 p_2' z$, since they all have a right angle, and share the angle $\angle (\beta, \alpha).$ Since $\dist(q, z)$ lies between $\dist(p_1, z)$ and $\dist(p_2, z)$, the triangle similarity implies that $\dist(q, \alpha)$ lies between $\dist(p_1, \alpha)$ and $\dist(p_2, \alpha).$}
\label{fig_convexity_thm_dist_to_line}
\end{figure}

Assume now that $X$ is concave. Then by definition there exist $p_1, p_2\in X$ and a point $q$ on the straight-line segment between $p_1$ and $p_2$ such that $q\notin X$. Since $X$ is closed, we have that there exists $\epsilon>0$ such that $B(q,\epsilon)\subset \mathbb{R}^d\setminus X$ (Figure~\ref{fig_convexity_thm_concave}). Let $\alpha$ be the line passing through $p_1$ and $p_2,$ and let $0 \leq r\leq \epsilon$. We then have that $\dist(p_1, \alpha) = \dist(p_2, \alpha) = 0 \leq r,$ so that $p_1, p_2 \in X_{\tau_\alpha, r}.$ We claim that the subset $X_{\tau_\alpha,r}$ is not path-connected.

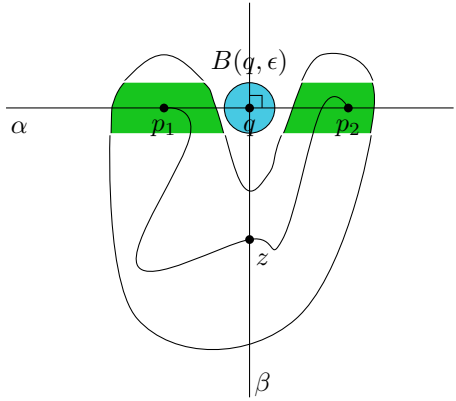
\begin{figure}[h]
\centering
\begin{tikzpicture}[scale=0.35]

\draw [name path = shape, thick, fill = nina_green]  plot[smooth, tension=.7] coordinates {(1, 8) (1.5, 9) (3, 10) (4.5, 9) (5, 8) (6, 5) (7, 5.5) (7.5, 7) (9, 10) (11, 8) (8, 0) (2, 0) (1, 8)};
\node at (4, 1) {$X$};
\draw [name path = rectangle, thick, white]  plot coordinates {(-1, 9) (12, 9) (12, 7) (-1, 7) (-1, 9)};
\tikzfillbetween[of=shape and rectangle]{white}

\node at (6.25, 8)[shape=circle, draw, inner sep=6.75pt, fill=nina_blue] {};
\node at (6.25, 9.75) {$B(q, \epsilon)$};

\node [scale = 3] (p1) at (3, 8) {.};
\node [scale = 3] (p2) at (10, 8) {.};
\node [scale = 3] (q) at (6.25, 8) {.};
\draw (-3, 8) -- (14, 8);
\node at (3, 7.25) {$p_1$};
\node at (10, 7.25) {$p_2$};
\node at (6.25, 7.25) {$q$};
\node at (-2.5, 7.25) {$\alpha$};

\draw [name path = shape]  plot[smooth, tension=.7] coordinates {(3, 8) (4, 7) (2, 2) (6.25, 3) (7.5, 3) (9, 8) (10, 8)};

\draw (6.25, 12) -- (6.25, -3);
\node at (6.75, -2.5) {$\beta$};
\node (alpha1) at (6.25, 12) {};
\draw [right angle symbol = {p1}{p2}{alpha1}];

\node [scale = 3] (z) at (6.25, 3) {.};
\node at (6.75, 2.25) {$z$};
\end{tikzpicture}
\caption{For a concave shape $X$, there exists a tubular filtration line $\alpha$ so that the resulting 0-dimensional PD sees multiple path-connected components (in green). Note that the path in the figure can exist, but it cannot lie completely in the particular sublevel set (in green).}
\label{fig_convexity_thm_concave}
\end{figure}

Let us assume otherwise, i.e., that that $X_{\tau_\alpha,r}$ is path-connected. Equivalently, there is a path connecting $p_1$ and $p_2$ and which is contained in $X_{\tau_\alpha,r}$. Then such a path would have to intersect the hyperplane $\beta$ passing through $q$ and orthogonal to line $\alpha$. To see why, we first note that the complement $\mathbb{R}^d \setminus \beta$ of the hyperplane is disconnected with two connected components, each containing one of the two points $p_1$ and $p_2$. If the path would not intersect the hyperplane, it would be contained in the complement of the hyperplane, but not entirely contained in one of the connected components, which yields a contradiction to the path being connected. By construction, this point of intersection $z \in X$ lies on the path between $p_1$ and $p_2$, and therefore $z \in X_{\tau_\alpha, r}$, or equivalently, $\tau_\alpha(z) = \dist(z, \alpha) \leq r$. Since $z$ is also contained in the hyperplane $\beta$ orthogonal to the line $\alpha$ and passing through $q$, we have that $\dist(z, q) = \dist(z, \alpha) \leq r,$ i.e., $z \in B(q,r)\subset \mathbb{R}^d\setminus X,$ which is a contradiction to $z \in X.$ Therefore, for any $0 \leq r\leq \epsilon$, the set $X_{\tau_\alpha,r}$ is not path-connected, so that the 0-dimensional PD on the tubular filtration with respect to $\alpha$ contains at least two intervals.

\end{proof}

\begin{remark}[Relationship with the height filtration]
\label{remark_tubular}

To illustrate the need for the tubular filtration, we discuss how it compares to the height filtration that is well-established in the literature. For a given shape $X \in \mathbb{R}^d$ and a unit vector $v \in S^{d-1}$, the height function $\eta_v: \mathbb{R}^d \rightarrow \mathbb{R}$ is defined via the scalar product, $\eta_v(x) = x \cdot v.$ If we consider the hyperplane that is orthogonal to the vector $v$ and passes through the origin $(0, 0, \dots, 0) \in \mathbb{R}^d$, the sublevel set $X_{\eta_v, r}$ corresponds to the area in $X$ above or in the hyperplane and below or at height $r \in \mathbb{R}$, and the complete area of $X$ below the hyperplane (where the scalar product is negative) (Figure~\ref{fig_tubular}, column 1). Note that it is possible to recenter the shape at the origin, so that the hyperplane does not need to pass through the origin.

We note that 0-dimensional PH with respect to the height filtration can help us detect \emph{some} concavities in $\mathbb{R}^d,$ what is the case for panel (a) in Figure~\ref{fig_tubular}, where we clearly see multiple path-connected components in green. However, for shapes in $\mathbb{R}^2$ where a source of concavity is a hole within the shape, such as the annulus-like shape in row 2 of Figure~\ref{fig_tubular}, the sublevel sets with respect to the height filtration will only see a single path-connected component, see panel (d) in Figure~\ref{fig_tubular}. Indeed, there is no unit vector $v$ for which the sublevel set $X_{\eta_v, r}=\{x \in X \mid \eta_v(x) = x \cdot v \leq r\}$ contains more than one path-connected component. If we adjust the definition of the filtration function and let $\eta'_v(x) = |x \cdot v|,$ the sublevel set $X_{\eta'_v, r}$ corresponds to the area in $X$ above and below the hyperplane, and within height $r \in \mathbb{R}$ in both directions. In other words, $X_{\eta'_v, r}$ is the area of $X$ within a given distance from the hyperplane. 0-dimensional PH with respect to the absolute height function $\eta'_v$ enables us to detect any concavity in $\mathbb{R}^2,$ see (b) and (e) in Figure~\ref{fig_tubular}, since the sublevel set consists of multiple path-connected components (in green). In $\mathbb{R}^2,$ the tubular filtration corresponds to the absolute height filtration. However, neither the height nor the absolute height filtration can detect concavity in higher dimensions. Indeed, consider a sphere-like shape as an example concave shape in $\mathbb{R}^3$ (Figure~\ref{fig_tubular}, row 3). The sublevel sets $X_{\eta_v, r}$ and $X_{\eta'_v, r}$ will result in a single path-connected component, see the green areas respectively in panels (g) and (h) in Figure~\ref{fig_tubular}. On the other hand, the area within a distance from some \emph{line} (points in $X$ that are within a given tube) can result in two (disconnected) disks on polar opposites on the sphere, see panel (i) in Figure~\ref{fig_tubular}.

However, we note that there are likely alternative approaches that can rely on PH with respect to the (absolute) height filtration to detect concavity. One possibility would be to also consider homology in higher dimensions (although, we note that $0$-dimensional PH is faster to compute, see Appendix~\ref{app_thm_computational}). For shapes in $\mathbb{R}^2,$ it is sufficient to consider the $0$- and $1$-dimensional PH with respect to the height filtration. Indeed, although the annulus-like shape (Figure~\ref{fig_tubular}, row $2$) does not see multiple path components with respect to any height filtration function, there is a $1$-dimensional hole which points to a concavity. This, however, does not generalize to higher dimensions: Indeed, a sublevel set of a sphere with respect to any height filtration will result in a spherical cap which has trivial homology in dimensions $0, 1$ and $2$, see panel (g) in Figure~\ref{fig_tubular}. On the other hand, the sublevel set of the absolute height function on the sphere will always consist of a single path-connected component, but we can capture $1$-dimensional holes, see panel (h) in Figure~\ref{fig_tubular}. Another interesting example of a concave surface in $\mathbb{R}^3$ is a ball with a dent on the north pole, i.e., a crater. This concavity cannot be detected with 0-dimensional PH with respect to any (absolute) height filtration. For the example surface, an interesting direction would be looking at the surface "from the top" (horizontal hyperplane), but PH would start by seeing a circle, and then the crater itself --- always a single path-connected component. However, $1$-dimensional PH with respect to this filtration would capture a hole, implying that the surface is concave.

Another possibility could be to study (the computable) multiparameter $0$-dimensional PH by scanning shapes from multiple directions \emph{simultaneously}. For the sphere, $0$-dimensional PH would capture the two path-connected components with the bi-filtration that looks at the shape with respect to the horizontal hyperplane denoted in the panel (h) in Figure~\ref{fig_tubular}, and the orthogonal hyperplane passing through the shape. We note that while the theory and computations for multiparameter persistence are hard, there have been some recent advances, see, e.g., \cite{rivet}. This is similar to slicing the shape with a hyperplane, and then studying the single-parameter $0$-dimensional PH of the slice. The alternative approaches that we briefly discuss here are an interesting avenue for further work.

\begin{figure}[h]
\begin{tabular}{lll}
\toprule
height & absolute height & tubular \\
\midrule
scalar product & distance from hyperplane & distance from line \\
$\eta_v(x)=x \cdot v$ & $\eta'_v(x)=|x \cdot v|=\dist(x, \alpha)$ & $\tau_\alpha(x)=\dist(x, \alpha) $ \\
\midrule

(a) \includegraphics[width = 3.4cm]{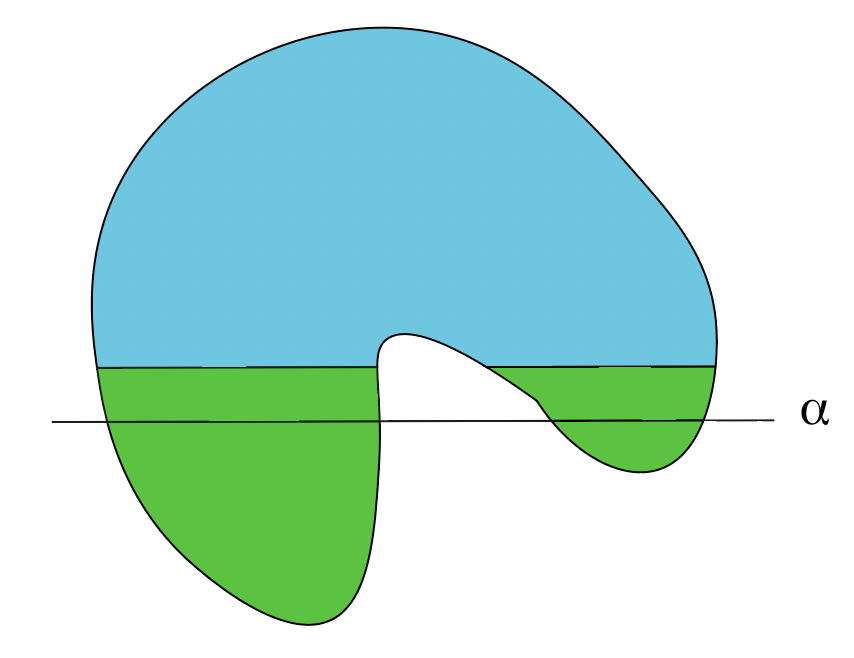} &
(b) \includegraphics[width = 3.1cm]{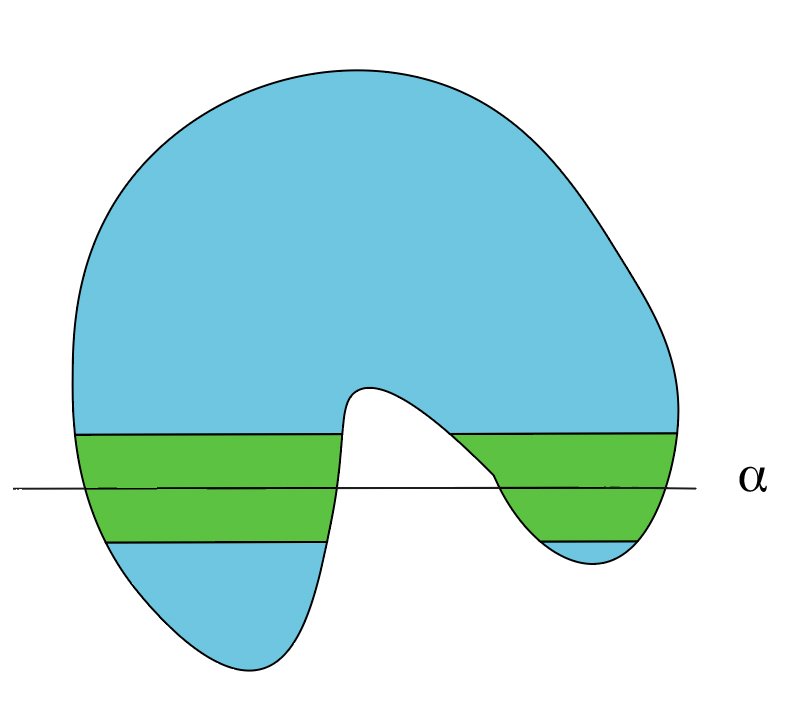} &
(c) \includegraphics[width = 3.1cm]{figures/convexity/shapes/fig9_b_c_n.png} \\

(d) \includegraphics[width = 3.75cm]{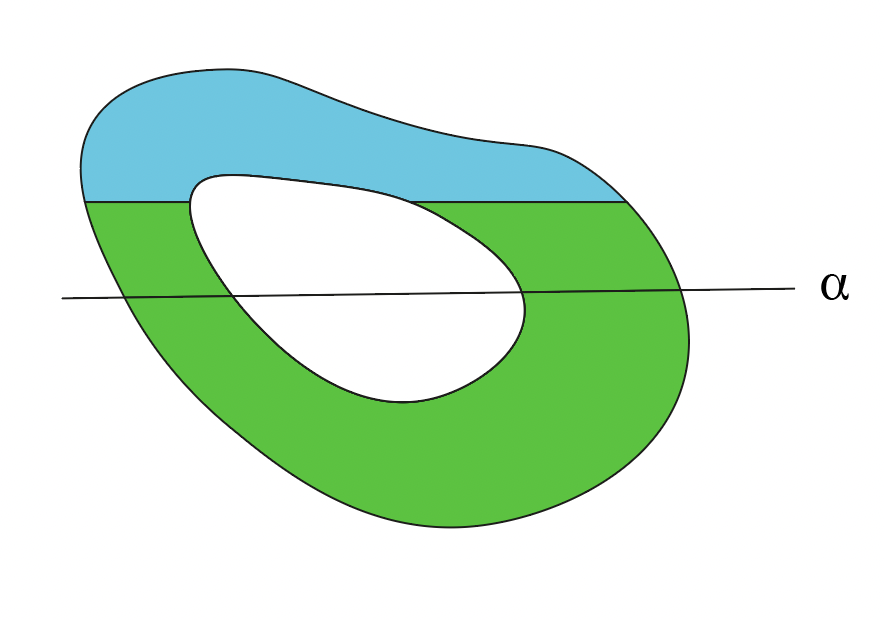} &
(e) \includegraphics[width = 3.75cm]{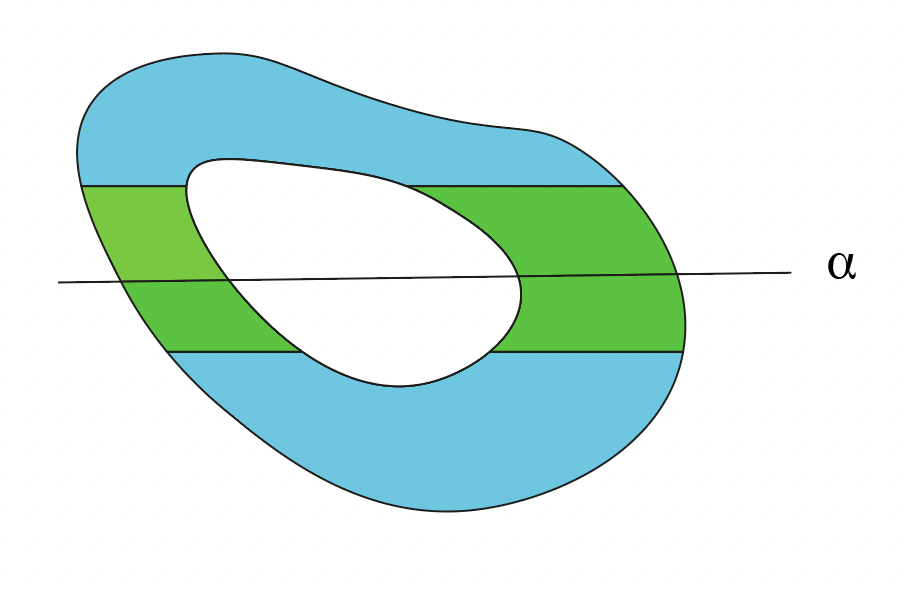} &
(f) \includegraphics[width = 3.75cm]{figures/convexity/shapes/fig9_e_f.png} \\

(g) \includegraphics[width = 3.75cm]{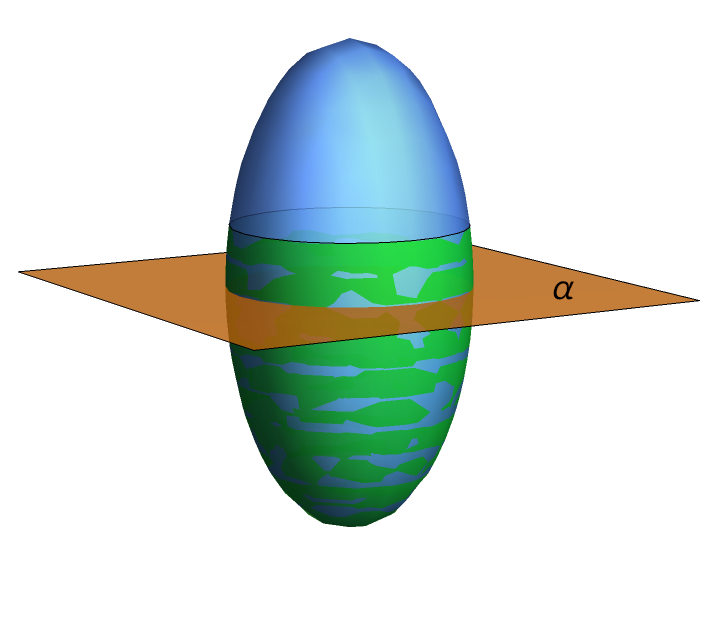} &
(h) \includegraphics[width = 3.75cm]{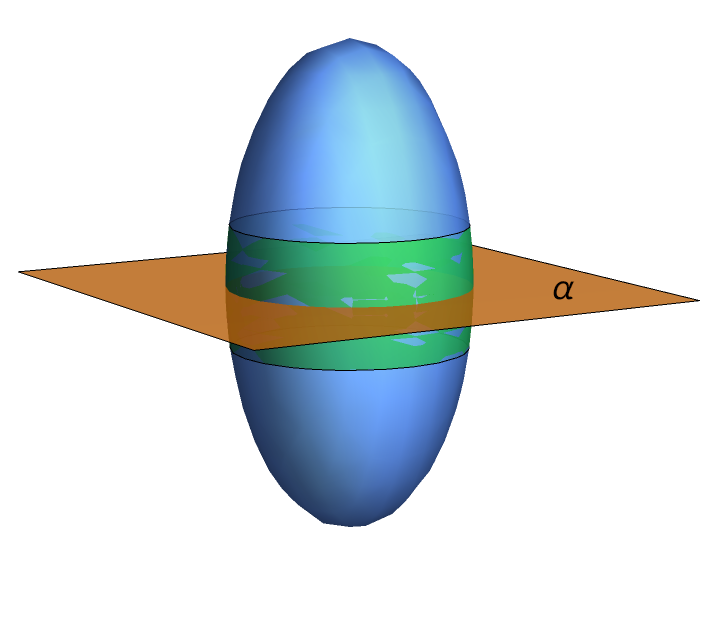} &
(i) \includegraphics[width = 3.75cm]{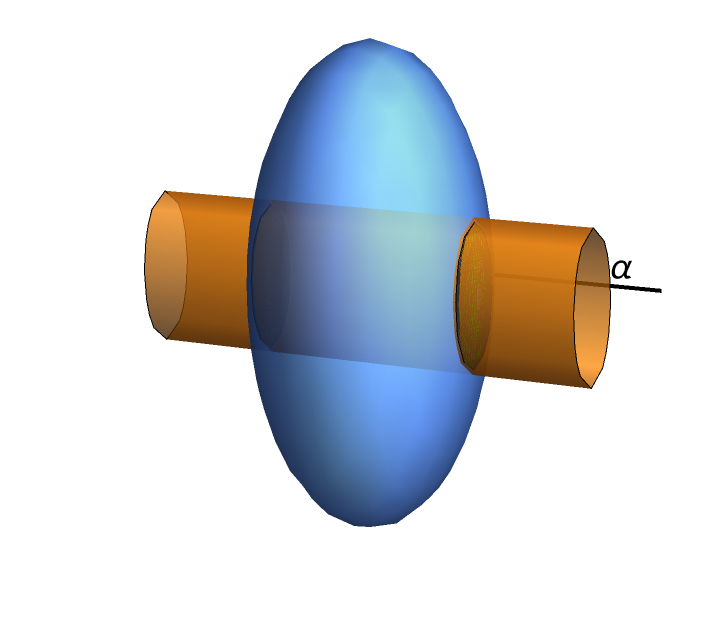} \\

\bottomrule
\end{tabular}
\caption{Sublevel sets of the height (column 1), absolute height (column 2) and tubular function (column 3) for three example concave shapes in $\mathbb{R}^2$ and $\mathbb{R}^3.$  Concavity is detected with multiple 0-dimensional persistence intervals, which reflect the multiple path-connected components in the filtration.
The sublevel set (in green) has multiple path-connected components for the height function only for the shape in row 1, for the absolute height only for shapes in rows 1 and 2, whereas for the tubular function this is the case for any concave shape in $\mathbb{R}^d$.}
\label{fig_tubular}
\end{figure}

\end{remark}

\begin{remark}[Relationship with the Persistent Homology Transform]
\label{remark_pth} 

There is a lot of work done on studying the so-called Persistent Homology Transform (PHT), which is given by considering the PH on the height filtration with respect to all unit vectors \cite{turner2014persistent, curry2018many, ghrist2018persistent}. Such a topological summary that has been shown to be a sufficient statistic for probability densities on the space of triangulizable subspaces of $\mathbb{R}^2$ and $\mathbb{R}^3$ \cite{turner2014persistent}.

For practical purposes it would not be feasible to have to consider all unit vectors in the PHT. Luckily, there are known results on the sufficient number of directions \cite{curry2018many}. In computational experiments in \cite{turner2014persistent} on the MPEG-7 silhouette database of simulated shapes in $\mathbb{R}^2$ \cite{sikora2001mpeg, latecki2000shape} and point clouds in $\mathbb{R}^3$ obtained from micro-CT scans of heel bones \cite{boyer2015new}, the PHT is approximated by looking respectively at $64$ evenly spaced directions and $162$ directions constructed by subdividing an icosahedron. Furthermore, $0$- and/or $1$-dimensional PH with respect to the height and/or related radial filtration has also been used as a $3$-dimensional shape descriptor in \cite{carriere2015stable}, for analysis of brain artery trees \cite{bendich2016persistent}, or classification of MNIST images of handwritten digits \cite{garin2019topological}.

We note that the theoretical results related to the Persistent Homology Transform focus on a complete description of shapes, whereas here we are interested in investigating to what extent PH can detect convexity and concavity. 

\end{remark}

\begin{remark}[Detection of convexity with PH in practice] 
\label{remark_convexity_practice} 

To calculate PH in practice, the sublevel sets $K_r$ need to be approximated with simplicial or cubical complexes (see Section~\ref{section_ph_subsection_approx}). 
The PH pipeline in our computational experiments for convexity detection (Section~\ref{section_convexity}) relies on cubical complexes, but it is possible to do so also with the Vietoris-Rips complex relying on geodesic distances. For details, see Appendix~\ref{app_convexity_pipeline}, where we conclude that it is important to ensure that the singular homology of each of the sublevel sets is properly reflected with the homology of the complex. For convexity detection, this means that the complexes of concave shapes should also see multiple connected components (that do not connect with a simplex).

Furthermore, we note that, to simply differentiate between convex and concave shapes (binary classification problem), it would be sufficient to only consider 0-dimensional \emph{homology} of the intersection of $X$ with the line $\alpha,$ i.e.,
$$X_{\tau_\alpha, 0} = \{ x \in X \mid \tau_\alpha(x) = \dist(x, \alpha) \leq 0 \} = X \cap \alpha,$$
which is easier to compute than the multi-scale PH. This intersection is a line segment for convex shapes, and for concave shapes it is a union of disconnected segments on a line. Therefore, convex shapes will have $\beta_0(X \cap \alpha)=1$, and concave shapes will have $\beta_0(X \cap \alpha) > 0$ (one could think of this as persistence intervals $[0, +\infty)$ that all have the same lifespan). However, in practical applications, it is often useful to capture a more detailed information about concavity. For example, for the plant morphology application we consider in Appendix~\ref{app_real_data}, the goal is to capture a continuous measure of concavity (regression problem). Then, the (different) lifespans of the second (or also third, fourth, ...) most-persisting connected component can provide important additional information. Moreover, in applications $X$ is typically finite, e.g., a point cloud, so that one would still need to approximate $X \cap \alpha$ (points on a line segment) with a complex in order to calculate the homology (see paragraph above). In other words, we would need to choose an appropriate scale $r \in \mathbb{R}$ that would ensure that the complex faithfully reflects the homology of $X \cap \alpha$, which is a non-trivial task.

\end{remark}

\subsection{Number of holes}
\label{app_thm_holes}

In our pipeline to detect the number of holes, we use the alpha complex, for which several theoretical guarantees have been proven.

The Nerve Lemma (see, e.g., \cite{edelsbrunner2010computational}) guarantees that the alpha complex of a set of points has the same homology-type as the space obtained by taking unions of balls of a certain radius centered around the points. Whether this union of balls has the same homology-type as the space from which the points are sampled depends on properties of the sample. If the sample is dense enough, then it has been shown that, for a suitable value of the scaling parameter, the alpha complex has the same number of holes as the original space, for instance under the assumption on the space being a smooth manifold \cite{niyogi2008finding}. For ease of reference, we reproduce here the result from \cite{niyogi2008finding}.

\begin{theorem}[Number of holes]
\label{thm_holes}
Let $M$ be a compact smooth manifold, and $X$ a set of points sampled uniformly at random from $M.$ Then there exists $r \in \mathbb{R}$ such that the homology of the alpha simplicial complex $\alpha(X, r)$ is isomorphic to the singular homology of the underlying manifold $M.$
\end{theorem}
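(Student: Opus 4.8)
The plan is to reproduce the classical sampling-theoretic argument of Niyogi--Smale--Weinberger, combined with the Nerve Lemma. The key quantity is the \emph{reach} (or \emph{condition number}) $\tau$ of the compact smooth submanifold $M \subset \mathbb{R}^N$, which is positive because $M$ is smooth and compact. The strategy has three layers: (i) for a suitable radius $r$, the union of balls $U_r = \bigcup_{x \in X} B(x, r)$ deformation retracts onto $M$, hence has the singular homology of $M$; (ii) by the Nerve Lemma, the \v{C}ech complex $\check{C}(X, r)$ is homotopy equivalent to $U_r$; and (iii) the alpha complex $\alpha(X, r)$ is homotopy equivalent to the \v{C}ech complex $U_r$ for the same radius (indeed the alpha complex is the nerve of the \emph{restricted} balls $B(x,r) \cap \mathrm{Vor}(x)$, which still form a good cover of $U_r$). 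Chaining these equivalences gives $H_\ast(\alpha(X,r)) \cong H_\ast(M)$.

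First I would fix notation: let $\tau > 0$ be the reach of $M$, and choose $r$ with, say, $r < \tau/2$ (the precise constant is the one from \cite{niyogi2008finding}, something like $\sqrt{3/5}\,\tau$ for the deformation retract and a compatible bound for the covering condition). The heart of step (i) is the geometric fact that if $X$ is an $\epsilon$-dense sample of $M$ (every point of $M$ is within $\epsilon$ of some $x \in X$) with $\epsilon$ small relative to $\tau$, then $U_r$ deformation retracts onto $M$ via the nearest-point projection $\pi_M \colon U_r \to M$, which is well-defined and continuous on the $\tau$-tubular neighborhood of $M$. This is exactly \cite[Proposition 3.1 and the surrounding lemmas]{niyogi2008finding}. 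So I would invoke that directly rather than re-derive the curvature estimates.

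Next I would handle the probabilistic part: since $X$ is sampled uniformly at random from $M$, with probability one (in fact with high probability once $|X|$ is large enough, but the statement as phrased only needs existence of a good sample, or one can read it as ``almost surely there exists $r$'') the sample is $\epsilon$-dense for any prescribed $\epsilon > 0$ — compactness of $M$ gives a finite cover by $\epsilon/2$-balls, each of positive measure, so eventually every such ball is hit. With $\epsilon$ chosen below the threshold of \cite{niyogi2008finding}, steps (i)–(iii) apply with that same $r$, and we conclude $H_\ast(\alpha(X, r)) \cong H_\ast^{\mathrm{sing}}(M)$.

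The main obstacle — or rather the one place requiring care — is step (iii): making precise that the alpha complex, not merely the \v{C}ech complex, inherits the right homotopy type. The clean way is to note that $\{B(x, r) \cap \mathrm{Vor}(x)\}_{x \in X}$ is a cover of $U_r$ by convex (hence contractible) sets with convex (hence contractible or empty) intersections, so the Nerve Lemma applies and its nerve is precisely $\alpha(X, r)$; this is standard (see, e.g., \cite{edelsbrunner2010computational}) and is already alluded to in the paragraph preceding the theorem. A secondary subtlety is that \cite{niyogi2008finding} is often stated for the \v{C}ech or Vietoris–Rips complex, so I would explicitly remark that the alpha complex version follows because $\alpha(X,r) \simeq \check{C}(X,r)$ share the homotopy type of $U_r$ for the same $r$. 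With these points made, the proof is a short assembly of cited results rather than new work.
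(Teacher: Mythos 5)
Your proposal is correct and follows essentially the same route as the paper's proof: invoke Niyogi--Smale--Weinberger to identify the homology of the union of balls $\cup_{x\in X}B(x,r)$ with that of $M$, then apply the Nerve Lemma to the cover by restricted balls whose nerve is the alpha complex. You supply more detail than the paper does (the reach, the $\epsilon$-density of a uniform sample, and the good-cover justification for the alpha complex), but the argument is the same assembly of cited results.
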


\begin{proof} 
\cite[Theorem 3.1]{niyogi2008finding} implies that there exists $r \in \mathbb{R}$ such that the singular homology of the $\cup_{x \in X} B(x, r)$ is isomorphic to the singular homology of $M.$ By the Nerve Lemma we then know that the simplicial homology of the alpha complex $\alpha(X, r)$ is isomorphic to the singular homology of  $\cup_{x \in X} B(x, r)$.
\end{proof}

The alpha complex is known to approximate the Vietoris--Rips complex, in the sense that the respective persistence modules are interleaved, see, e.g. \cite{edelsbrunner2010computational}.

\subsection{Curvature}
\label{app_thm_curvature}

Here we reproduce the theoretical guarantee provided in \cite{bubenik2020persistent}.

\begin{theorem}[Curvature]
\label{thm_curvature}
Let $M$ be a manifold with constant curvature $\kappa$, and $D_\kappa$ be a unit disk on $M.$ Let further $X$ be a point cloud sampled from $X,$ according to the probability measure proportional to the surface area measure. Then, PH of $X$ recovers $\kappa.$
\end{theorem}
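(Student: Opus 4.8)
The plan is to show that the \emph{law} of the random persistence diagram of $X$ is completely determined by $\kappa$, and, conversely, that $\kappa$ can be read back off from it, so that a suitable summary of the persistent homology of $X$ (averaged over resamplings, or taken in the large-sample limit) recovers $\kappa$. The starting observation is that the Vietoris--Rips filtration, and hence every persistence diagram we extract, is a function only of the matrix of pairwise geodesic distances between the points of $X$. Since the unit disk $D_\kappa$ is, up to isometry, completely determined by $\kappa$, the distribution of this distance matrix --- and therefore the distribution of $\mathrm{PH}(X)$ --- depends on $\kappa$ alone. The actual content is the converse direction: distinct values of $\kappa$ must yield distributions that differ in a way we can exploit.

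First I would compute the distribution of the geodesic distance $\rho$ between two points drawn independently and uniformly (with respect to the area measure) from $D_\kappa$. On a surface of constant curvature $\kappa$ the area of a geodesic ball of radius $t$ is the explicit elementary function $A_\kappa(t)$ equal to $\pi t^2$ for $\kappa = 0$, to $\tfrac{2\pi}{\kappa}\bigl(1-\cos(\sqrt{\kappa}\,t)\bigr)$ for $\kappa>0$, and to $\tfrac{2\pi}{|\kappa|}\bigl(\cosh(\sqrt{|\kappa|}\,t)-1\bigr)$ for $\kappa<0$. Integrating $A_\kappa$ against the normalized area measure of $D_\kappa$, with the appropriate correction for balls that meet the boundary of the unit disk, gives the cumulative distribution function $F_\kappa(t)=\Pr[\rho\le t]$. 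The qualitative fact I would extract, by differentiating in $\kappa$, is that $A_\kappa(t)$, and hence $F_\kappa(t)$, is strictly decreasing in $\kappa$ for each fixed $t\in(0,2)$: positive curvature concentrates area near a point while negative curvature disperses it. Thus for each fixed scale $r$ the map $\kappa\mapsto F_\kappa(r)$ is injective.

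The next step is to connect this to persistent homology proper. In degree $0$ the number of bars of $\mathrm{PH}(X)$ alive at scale $r$ equals the number of connected components of the Vietoris--Rips graph at scale $r$, and the multiset of death times equals the multiset of edge weights of a minimum spanning tree of the complete graph on $X$ weighted by geodesic distances; both are genuine functionals of $\mathrm{PH}(X)$. The expectation of such a functional --- for instance the expected total $0$-dimensional persistence, or the expected number of components at a fixed scale, which is essentially governed by $\binom{n}{2}F_\kappa(r)$ --- is a continuous function of $\kappa$ and inherits strict monotonicity from that of $F_\kappa$. Inverting this monotone function recovers $\kappa$ from (an estimate of) the expected persistence summary, which is the assertion of the theorem; the same strategy applies verbatim to degree-$1$ summaries such as the expected persistence landscape, which is the route taken in \cite{bubenik2020persistent} and which moreover exhibits the finer dependence on $\kappa$ visible in the experiments. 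One may also note, as a sanity check, that with enough points the empirical metric space converges to $D_\kappa$ in the Gromov--Hausdorff sense, and the curvature of a smooth surface is determined by its Riemannian metric, so recovery of $\kappa$ is to be expected in principle --- but the point of the theorem is that the coarser information retained in PH already suffices.

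The hard part will be twofold. First, one must make ``recovers $\kappa$'' precise: a single finite point cloud produces a \emph{random} diagram, so the statement is really one about the law of $\mathrm{PH}(X)$ --- either about its expectation over resampling, or an asymptotic statement as $n\to\infty$ together with a consistency or stability argument guaranteeing that the empirical summary concentrates on its mean. Second, the boundary effects of the unit disk must be handled with care, since geodesic balls centered near $\partial D_\kappa$ are truncated and the clean formula for $A_\kappa$ holds only in the interior; the monotonicity in $\kappa$ must be shown to survive this truncation --- it does, because the truncation correction is itself monotone in $\kappa$, but this needs a short argument rather than a one-line appeal to $A_\kappa$.
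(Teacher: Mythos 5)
Your overall strategy --- exhibit a functional of the law of the persistence diagram of $X$ that is a continuous, strictly monotone, hence invertible, function of $\kappa$ --- is the same strategy the paper uses, but via a different functional: the paper simply cites \cite[Theorem 14]{bubenik2020persistent}, where the functional is the degree-$1$ persistence ratio $p=d/b$ of a random \emph{triangle} ($3$-point sample), for which an explicit analytic expression in $\kappa$ is available and shown to be continuous and increasing. Your substitute, built from the pairwise geodesic distance distribution and degree-$0$ PH, is a legitimate alternative in principle, but the key monotonicity step is wrong as written. From the fact that the geodesic ball area $A_\kappa(t)$ is decreasing in $\kappa$ you conclude ``hence $F_\kappa(t)$ is strictly decreasing in $\kappa$''; this ignores that the sampling measure is \emph{normalized} by $\text{area}(D_\kappa)=A_\kappa(1)$, which is itself decreasing in $\kappa$. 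To leading order (ignoring the boundary truncation you flag) one has $F_\kappa(t)\approx A_\kappa(t)/A_\kappa(1)=\sin^2\bigl(\sqrt{\kappa}\,t/2\bigr)/\sin^2\bigl(\sqrt{\kappa}/2\bigr)$, and since $x\cot x$ is decreasing on $(0,\pi)$ this ratio is strictly \emph{increasing} in $\kappa$ for each fixed $t\in(0,1)$ (for instance $F_2(0.5)\approx 0.28 > 0.25 = F_0(0.5)$): positive curvature shrinks the whole disk even more than it shrinks its sub-balls, so two random points are relatively \emph{closer}. Injectivity of $\kappa\mapsto F_\kappa(t)$ survives, but the inference ``$A_\kappa$ monotone in $\kappa$ implies $F_\kappa$ monotone in $\kappa$ in the same direction'' is a non sequitur; you must actually compute the normalized ratio and check that the boundary correction preserves its monotonicity.

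The second gap is the transfer from the marginal distance distribution to an actual PH functional of an $n$-point cloud. The expected number of connected components of the Vietoris--Rips graph at scale $r$ is \emph{not} ``essentially governed by $\binom{n}{2}F_\kappa(r)$'' --- that is the expected number of edges, and the component count depends on the full joint law of all $\binom{n}{2}$ distances, so ``inherits strict monotonicity from that of $F_\kappa$'' is asserted rather than proved; the same objection applies to the expected total $0$-dimensional persistence (a sum of minimum-spanning-tree weights, which are order statistics of correlated distances). The clean repair is exactly the reduction the paper leans on: work with a $k$-point sample for small fixed $k$. For $k=2$ the single finite degree-$0$ bar dies exactly at the pairwise distance, so its expected death time is $\int_0^\infty\bigl(1-F_\kappa(t)\bigr)\,dt$ and stochastic monotonicity of $F_\kappa$ transfers immediately; for $k=3$ one gets the degree-$1$ persistence of triangles, which is precisely Bubenik's Theorem 14. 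Your closing observation that the statement ``PH of $X$ recovers $\kappa$'' needs to be made precise (it is a statement about the law, or an expectation over resampling) is well taken --- the paper's own one-line proof is no more careful on this point.
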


\begin{proof} 
Given $\kappa$, \cite[Theorem 14]{bubenik2020persistent} establishes an analytic expression for the persistence ($p = d/b$) of triangles to the curvature $\kappa$ of the underlying manifold. This function is continuous and increasing, so that persistence recovers curvature.
\end{proof}

\subsection{Computational complexity}
\label{app_thm_computational}

In this section, we discuss how our pipelines are affected by the size $n$ of the point cloud and the dimension $d$ of the embedding space (which are also related, since typically exponentially more points are needed to properly sample a shape in higher dimensions).

There exist several efficient algorithms for the computation of PH, many coming with heuristic guarantees on speed-ups for the computation (see survey \cite{otter2017roadmap} for an overview, and references therein). For the purposes of this discussion, we will focus on the standard algorithm, which has a computational complexity which is cubical in the number $N$ of simplices contained in the filtered complex \cite{zomorodian2005computing}, i.e., the computational complexity is $\mathcal{O}(N^3).$ Thus, to better understand how our pipelines generalize to higher-dimensional point clouds, in the following we explain how the sizes $N$ of the different types of complexes that we consider are affected by the size $n$ and dimension $d$ of the point cloud.

\paragraph{Number of holes} For the detection of the number of holes, in our experiments we relied on alpha simplicial complex. In the worst case, the size $N$ of the alpha complex is $\mathcal{O}\left(n^{\lceil d/2 \rceil}\right)$.

\paragraph{Curvature} We used the Vietoris-Rips simplicial complex for curvature detection. The size $N$ of the Vietoris--Rips complex is $\mathcal{O}\left ( n^{k+2}\right )$, where $k$ is the maximum PH degree that we are interested in computing.

In general, the choice between the alpha and Vietoris-Rips simplicial complex therefore depends on the point cloud size $n$ and dimension $d$, and the highest homological dimension $k$ that is of interest for the given problem at hand. For point clouds with a given number $n$ of points, the alpha complex is better suited in lower dimensions ($d=2,3$), and provided that the point cloud is embedded in Euclidean space.

\paragraph{Convexity} To detect convexity, we relied on cubical complexes. For data sets that have an inherently cubical structure, using cubical complexes may yield
significant improvements in both memory and runtime efficiency \cite{wagner2012efficient}. This is particularly true for high dimensional data, since the ratio between the size of the Vietoris-Rips simplicial complex compared to a cubical complex is exponential in dimension $d$ \cite{chen2011persistent}.

In our construction, given a point cloud in $\mathbb{R}^d$ and a fixed $c \in \mathbb{N}$, we bin the points into $c^d$ cubes of the same volume. In the worst case, the size $N$ of the cubical complex of the resulting structure is $\mathcal{O}(3^d c^d)$ (and thus does not depend on the number $n$ of point cloud points).

In addition, it is important to note that our convexity detection pipelines only uses the $0$-dimensional persistent homology, which has a reduced complexity since one only needs to construct the complex up to dimension $1$. It is fairly easy to compute 0-dimensional PH in near-linear time with respect to the number $N$ of simplices by using union-find data structures \cite{edelsbrunner2010computational, wagner2012efficient, dlotko2014simplification}. For this reason, it is an important advantage of our pipeline that it only relies on $0$-dimensional PH, without needing to calculate PH in higher homological dimensions.

Detecting convexity, however, poses additional challenges. Testing convexity is fundamentally a hard problem in high dimensions, related to the hardness of computing convex hulls in high dimensions, and unfortunately we cannot hope for free lunch. In our PH convexity detection pipeline, unlike for the detection of the number of holes or curvature, we calculate PH across multiple tubular-filtration lines, whose number also grows with the dimension $d$ since sufficiently many filtrations need to be considered (and the same would be the case - we would have to consider multiple tubular directions, if we considered simplicial instead of cubical complexes, see Figure~\ref{fig_convexity_simplicial_cubical}). This could be circumvented by considering (quasi-)random lines. To conclude, specifying a desired computation budget and number of filtrations in advance (leading to a corresponding accuracy tradeoff), our PH pipeline can be used to obtain fast estimates of convexity. It can also be used to compute a continuous measure of convexity (as we  demonstrate on the real-world FLAVIA data set of leaf images in Appendix~\ref{app_real_data}), or convexity at a given resolution, depending on the resolution of the filtration, which in some cases may be more useful than the binary label (convex or concave).

\section{Experimental details}
\label{app_exp}

\subsection{Reproducibility and computer infrastructure}
\label{app_exp_rep}

The data and code developed for this research are made publicly available at \url{https://github.com/renata-turkes/turkevs2022on}. All our computations were conducted using a 2.7Ghz vCPU core from a DGX-1 + DGX-2 station.

\subsection{Hyperparameter tuning and training procedure for the individual pipelines}
\label{app_exp_pipelines}

In this section, we provide more details about the pipelines that were compared in the computational experiments: 
\begin{itemize}[leftmargin=*]
\item SVM on persistent homology features (PH), 
\item simple machine learning (ML) baseline - SVM on distance matrices, 
\item fully connected neural network (NN) on distance matrices, and
\item PointNet (PointNet) on raw point clouds. 
\end{itemize}

For each pipeline, we list the hyperparameters that were tuned. To ensure a fair comparison of the different approaches, we used the same train and test data across all the pipelines. We used \texttt{sklearn GridSearchCV} based on cross validation with 3 folds and random splits, and returned the hyperparameter values that resulted in the highest accuracy for classification problems (Section~\ref{section_holes} and Section~\ref{section_convexity}), or the lowest mean squared error for regression problems (Section~\ref{section_curvature}). We also list relevant software and licenses.

\paragraph{PH} The general steps to extract PH features are visualized in Figure~\ref{fig_ph_pipeline}. To calculate PH in Section~\ref{section_holes} and Section~\ref{section_convexity} we use \texttt{GUDHI} \cite{gudhi:urm}, and in Section~\ref{section_curvature} we use \texttt{Ripser} \cite{bauer2021ripser, tralie2018ripser}, which are persistent homology libraries in Python, available under the MIT (GPL v3) license. For the DTM filtration (Section~\ref{section_ph_subsection_filtration}) in Section~\ref{section_holes}, we choose $m = 0.03,$ so that $0.03 \times 1\,000 = 30$ nearest neighbors are used to calculate the filtration function. Grid search is performed to choose the best persistence signature and classifier or regressor as described below.

\begin{itemize}
\item In Section~\ref{section_holes} and Section~\ref{section_curvature}, we select between:
\begin{itemize}[leftmargin=*]
\item simple signature of 10 longest lifespans, 
\item persistence images with resolution $10 \times 10$, 
bandwidth $\sigma \in \{0.1, 0.5, 1, 10\}$, 
and weight function $\omega(x, y) \in \{1, y, y^2\}$, and 
\item persistence landscapes with resolution of $100$, and considering the longest 1, 10 or all persistence intervals. 
\end{itemize}

\item We use SVM (\texttt{sklearn} \texttt{SVC} and \texttt{SVR} for classification and regression respectively) on the PH signature, with the regularization parameter $C \in \{0.001, 1, 100\}$. The latter tunes the trade off between correct classification of training data and maximization of the decision function's margin. 
\end{itemize}

\begin{figure}[h]
\centering

\fbox{
\begin{tikzpicture}[auto, >=latex'] 
\tikzstyle{block}=[draw, shape=rectangle, rounded corners=0.5em, fill = blue!15!white];

\node [block] (pc) {point cloud};
\node [block, right = 7em of pc] (filtration) {filtration};
\node [block, right = 7em of filtration] (pd) {$0$ and $1$-dim PD};
\node [block, right = 1em of pd] (ph) {0- and 1-dim PH};

\node [above = -0em of pc] {\includegraphics[height = 0.095\linewidth]{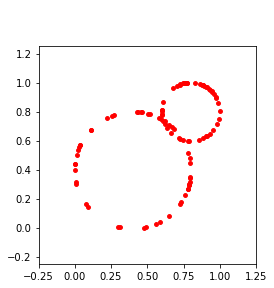}};
\node [above = -0em of filtration] {\includegraphics[height = 0.095\linewidth]{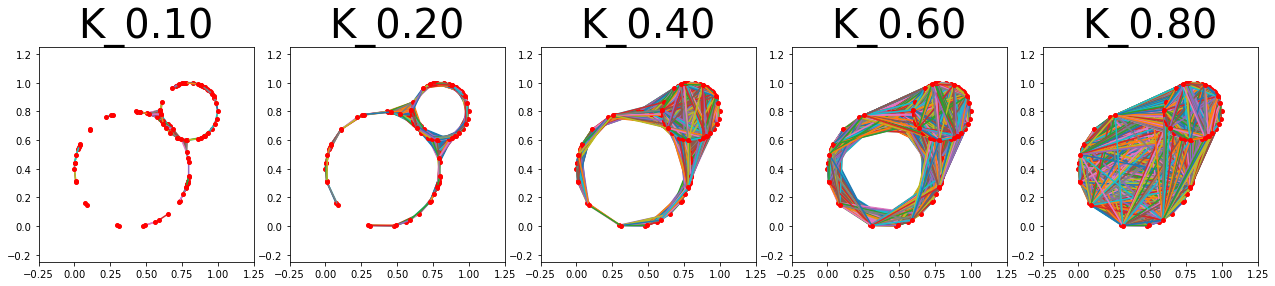}};
\node [above = -0em of pd] {\includegraphics[height = 0.095\linewidth]{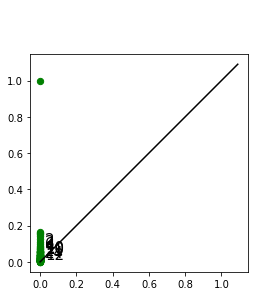} \includegraphics[height = 0.095\linewidth]{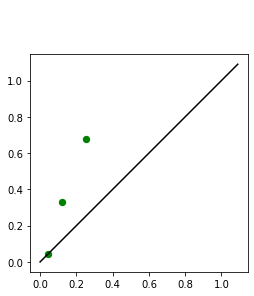}};
\node [above = -0em of ph] {\includegraphics[height = 0.085\linewidth]{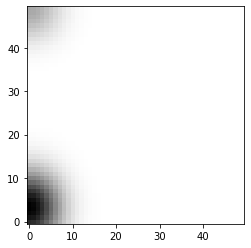} \includegraphics[height = 0.085\linewidth]{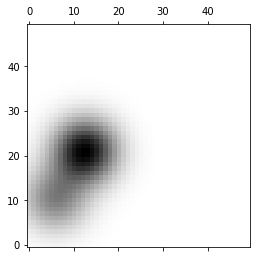}};

\path[draw,->] (pc) edge (filtration)
(filtration) edge (pd)
(pd) edge (ph);
\end{tikzpicture}
}

\caption{Persistent homology features. To calculate PH for the given point cloud in $\mathbb{R}^2,$ we first construct a filtration $\{ K_r \}_{r \in \mathbb{R}}$ which approximates $X$ at different scales $r \in \mathbb{R},$, where $K_r$ is the Vietoris-Rips simplicial complex. $0$-dimensional PD has one persistent cycle, reflecting the single  component, and a number of short cycles that correspond to the individual point-cloud points that are connected to other ones early in the filtration. $1$-dimensional PD summarizes the two holes, whose birth and death values respectively reflect the sparsity along the hole and the size of the hole, as these are the scales $r \in \mathbb{R}$ at which the hole appears and when it is filled in within the filtration. PDs are then represented by PIs, which are vector summaries that can be used in statistical learning frameworks, but many other signatures (denoted, in general, with PH) can be used.}
\label{fig_ph_pipeline}
\end{figure}

\paragraph{ML} In our experiment, the input for the simple machine learning (ML) pipeline is the normalized matrix of pairwise distances between point-cloud points. For a given point cloud $X=\{x_1, \ldots, x_n \}\subset \mathbb{R}^d$, the corresponding distance matrix is the $n \times n$ matrix $D \in \mathbb{R}^{n \times n},$ with entries $D_{ij}=\dist(x_i, x_j)$ corresponding to the Euclidean distance for the detection of the number of holes (Section~\ref{section_holes}) or convexity (Section~\ref{section_convexity}), and hyperbolic, Euclidean or spherical distance for curvature detection (Section~\ref{section_curvature}). We take the entries above the diagonal flattened into a vector. Since the dimension of distance matrices scales with the square of the number of points, we work with subsamples of $100$ distinct random points from each point cloud. Similarly as above, we use cross validation to choose the SVM regularization parameter among $C \in \{0.001, 1, 100\}$.

We note that while a distance matrix can be taken as input to a classifier, it depends on the particular and arbitrary labeling of the points in the point cloud and hence it does not account for the label symmetry of point clouds.

\paragraph{NN} The normalized distance matrices are also fed to the multi-layer perceptrons (MLPs). We consider the following hyperparameters:
\begin{itemize}
\item depth in $\{1, 2, 3, 4, 5\}$ (only for NN deep),
\item layer widths in $\{64, 256, 1\,024, 4\,096\}$,
\item learning rate in $\{0.01, 0.001\}$,
\end{itemize}
that are selected through a grid search, with each parameter setting trained for 2 epochs. We use a soft-max activation function, and cross entropy and mean squared error as loss functions for classification (Section~\ref{section_holes} and Section~\ref{section_convexity}) and regression (Section~\ref{section_curvature}) problems, respectively. Batch normalization (with zero momentum) and a drop out (with a rate of 0.5) are applied after every (input or hidden) layer.

\paragraph{PointNet} PointNet \cite{qi2017pointnet} is a neural network that takes point clouds as inputs, and is inspired by the invariance of point clouds to permutations and transformations. It incorporates fully-connected MLPs to approximate classification functions, and convolutional layers to capture geometric relationships between features. 

In our experiments, we rely on the PointNet model from \texttt{keras} \cite{griffiths2020point} under Apache License 2.0. This model implements the architecture from the original PointNet paper \cite{qi2017pointnet}, which is supplemented with a publicly available code \cite{qi2017pointnet_code}, licensed under MIT. We use grid search to tune: 
\begin{itemize}
\item number of filters in $\{32, 64\}$, 
\item learning rate in $\{0.01, 0.001\}$. 
\end{itemize}
For each of the problems we consider, the model is trained from scratch using the training set described in the corresponding section. Unlike in the original paper, we do not augment the data during training by randomly rotating the object or jittering position of each point by a Gaussian noise, in order to ensure a fair comparison with the other pipelines.

\FloatBarrier 
\section{Additional experimental details for number of holes}
\label{app_holes}

\subsection{Data transformations}
\label{app_holes_transformations}

To test the noise robustness of the different pipelines, in Section~\ref{section_holes} we consider the test data consisting of the original point clouds, or point clouds under different transformations (Figure~\ref{fig_holes_results}). A detailed description of the data transformations is given in Table~\ref{tab_transformations}, and the transformations are visualized on an example point cloud in Figure~\ref{fig_transformations}. To define reasonable values for the data transformations, we took inspiration from the affNIST\footnote{\url{https://www.cs.toronto.edu/~tijmen/affNIST/}} data set of MNIST images under affine transformations.

\begin{table}[h]
\caption{Data transformations.}
\label{tab_transformations}
\centering
\begin{tabular}{p{0.18\linewidth}p{0.75\linewidth}} 
\toprule
\textbf{Transformation} & \textbf{Definition of transformation} \\
\midrule
\noalign{\smallskip}
rotation & Clockwise rotation by an angle chosen uniformly from $[-20, 20]$ degrees clockwise. \\ \midrule
translation & Translation by random numbers chosen from $[-1, 1]$ for each direction. \\ \midrule
stretch & Scale by a factor chosen uniformly from $[0.8, 1.2]$ in the $x$-direction. The other coordinates remain unchanged, so that the point cloud is stretched. Stretching factor of $0.8$ results in shrinking the point cloud by $20\%,$ and the factor of $1.2$ makes it $20\%$ larger. \\ \midrule
shear & Shear by a factor chosen uniformly from $[-0.2, 0.2].$ A shearing factor of 1 means that a horizontal line turns into a line at $45$ degrees. \\ \midrule
Gaussian noise & Random noise drawn from normal distribution $\mathcal{N}(0, \sigma)$ with the standard deviation $\sigma$ uniformly chosen from $[0, 0.1]$ is added to the point cloud.  \\ \midrule
outliers & A percentage, chosen uniformly from $[0, 0.1]$, of point cloud points are replaced with points sampled from a uniform distribution within the range of the point cloud. \\ 
\bottomrule
\end{tabular}
\end{table}

\begin{figure}[h]
\centering
\begin{tabular}{cccccccc}
original && translation & rotation & stretch & shear & Gaussian & outliers \\
\includegraphics[width = 0.11\linewidth]{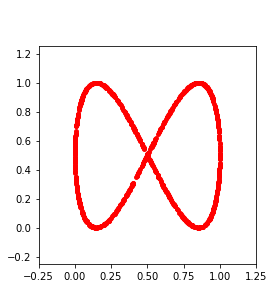} &&
\includegraphics[width = 0.11\linewidth]{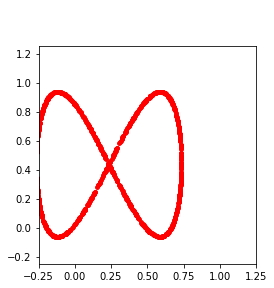} &
\includegraphics[width = 0.11\linewidth]{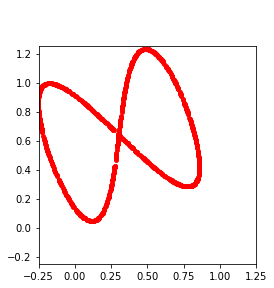} &
\includegraphics[width = 0.11\linewidth]{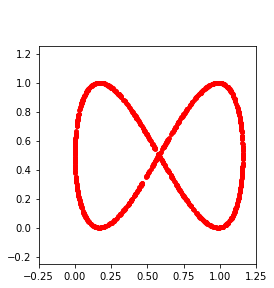} &
\includegraphics[width = 0.11\linewidth]{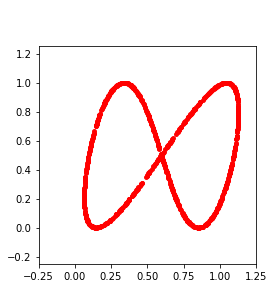} &
\includegraphics[width = 0.11\linewidth]{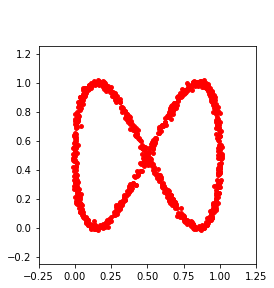} &
\includegraphics[width = 0.11\linewidth]{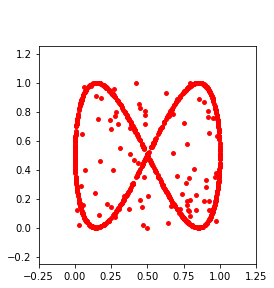} \\
\end{tabular}
\caption{An example point cloud under the considered transformations.}
\label{fig_transformations}
\end{figure}

\subsection{Pipeline}
\label{app_holes_pipeline}

Figure~\ref{fig_holes_pipeline} visualizes the PH pipeline. To reduce the computation times, we approximate point clouds at each scale with the alpha simplicial complex (discussed in Section~\ref{section_ph}, and in particular, in Section~\ref{section_ph_subsection_approx}). The DTM filtration on the point-cloud points is defined as the average distance from a number of nearest neighbors. Therefore, outliers appear only late in the filtration, so that their influence is smoothed out to a great extent. For the example point cloud in the figure, the 1-dimensional PD consists of four persistence intervals with non-negligible persistence or lifespan (PD points far from diagonal) which correspond to the four big holes, and many short persistence intervals that correspond to holes that are seen at some scales due to noise. This is clearly reflected in the vector of sorted lifespans of the 10 most persisting cycles.

\begin{figure}[h]
\centering
\fbox{
\begin{tikzpicture}[auto, >=latex']
\tikzstyle{block}=[draw, shape=rectangle, rounded corners=0.5em, fill = blue!15!white];
\node [block] (pc) {point cloud};
\node [block, right = 10em of pc] (filtration) {DTM filtration};
\node [block, right = 15em of filtration, anchor = east] (pd) {1-dim PD};
\node [block, below = of pd.east, anchor = east] (ph) {1-dim lifespans, PI or PL};
\node [block, below = of pc.west, anchor = west] (ml) {SVM};
\node [above = -0.25em of pc] {\includegraphics[height = 0.125\linewidth]{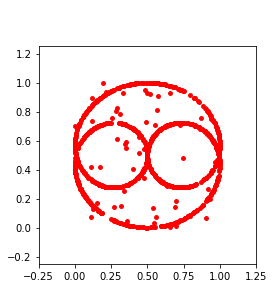}};
\node [above = -0.25em of filtration] {\includegraphics[height = 0.125\linewidth]{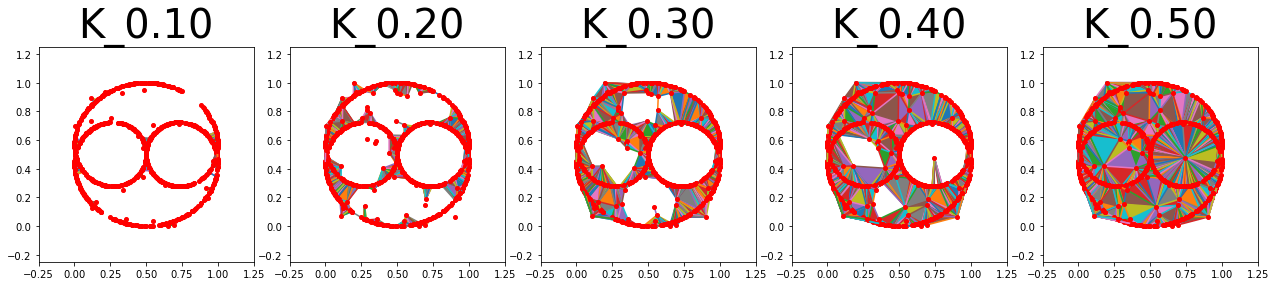}};
\node [above = -0.25em of pd] {\includegraphics[height = 0.125\linewidth]{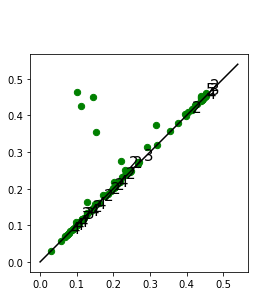}};
\node [below left = 0.3em and -11em of ph] {(0.36, 0.35, 0.33, 0.25, 0.05, 0.05, 0.02, 0.01, 0.01, 0.01)};
\path[draw,->] (pc) edge (filtration)
(filtration) edge (pd)
(pd) edge ([xshift=3.1em, yshift=0.9em]ph)
(ph) edge (ml);
\end{tikzpicture}
}
\caption{Persistent homology pipeline to detect the number of holes.}
\label{fig_holes_pipeline}
\end{figure}

\subsection{Performance across multiple runs}
\label{app_holes_performance}

Table~\ref{tab_holes_accs} provides a detailed overview of the results for the detection of the number of holes, when the experiment is repeated multiple times. The accuracy for PointNet varies for different runs, but in any case, we can clearly see that PH performs the best for each individual run. Note also that the performance of ML, NN shallow and NN deep does not drop under affine transformations, since they take the normalized distance matrices as input.

\begin{table}[h]
\caption{Accuracy across multiple runs for the detection of the number of holes.}
\label{tab_holes_accs}
\centering
\begin{tabular}{lllllllll} 
\toprule

\textbf{Transformation} & \textbf{Run} & \textbf{PH simple} & \textbf{PH} & \textbf{ML} & \textbf{NN shallow} & \textbf{NN deep} & \textbf{PointNet} \\
\midrule

\multirow{5}{*}{original}
& run 1 & 0.94 & \textbf{1.00} & 0.67 & 0.52 & 0.50 & \textbf{1.00} \\
& run 2 & 0.94 & \textbf{1.00} & 0.67 & 0.51 & 0.50 & \textbf{1.00} \\
& run 3 & 0.94 & \textbf{1.00} & 0.67 & 0.56 & 0.50 & \textbf{1.00} \\
\cline{2-8}
& mean & 0.94 & \textbf{1.00} & 0.67 & 0.53 & 0.50 & \textbf{1.00} \\
& std dev & 0.00 & 0.00 & 0.00 & 0.03 & 0.00 & 0.00 \\
\midrule

\multirow{5}{*}{translation}
& run 1 & 0.94 & \textbf{1.00} & 0.67 & 0.52 & 0.50 & 0.23 \\
& run 2 & 0.94 & \textbf{1.00} & 0.67 & 0.51 & 0.50 & 0.17 \\
& run 3 & 0.94 & \textbf{1.00} & 0.67 & 0.56 & 0.50 & 0.21 \\
\cline{2-8}
& mean & 0.94 & \textbf{1.00} & 0.67 & 0.53 & 0.50 & 0.20 \\
& std dev & 0.00 & 0.00 & 0.00 & 0.03 & 0.00 & 0.03 \\
\midrule

\multirow{5}{*}{rotation}
& run 1 & 0.94 & \textbf{1.00} & 0.67 & 0.52 & 0.50 & 0.86 \\
& run 2 & 0.94 & \textbf{1.00} & 0.67 & 0.51 & 0.50 & 0.57 \\
& run 3 & 0.94 & \textbf{1.00} & 0.67 & 0.56 & 0.50 & 0.78 \\
\cline{2-8}
& mean & 0.94 & \textbf{1.00} & 0.67 & 0.53 & 0.50 & 0.74 \\
& std dev & 0.00 & 0.00 & 0.00 & 0.03 & 0.00 & 0.15 \\
\midrule

\multirow{5}{*}{stretch}
& run 1 & 0.97 & \textbf{0.98} & 0.64 & 0.49 & 0.47 & 0.85 \\
& run 2 & 0.97 & \textbf{0.98} & 0.64 & 0.48 & 0.45 & 0.70 \\
& run 3 & 0.97 & \textbf{0.98} & 0.64 & 0.52 & 0.51 & \textbf{0.98} \\
\cline{2-8}
& mean & 0.94 & \textbf{0.98} & 0.64 & 0.50 & 0.48 & 0.84 \\
& std dev & 0.00 & 0.00 & 0.00 & 0.02 & 0.03 & 0.14 \\
\midrule

\multirow{5}{*}{shear}
& run 1 & 0.95 & \textbf{1.00} & 0.66 & 0.54 & 0.51 & 0.94 \\
& run 2 & 0.95 & \textbf{1.00} & 0.66 & 0.51 & 0.50 & 0.72 \\
& run 3 & 0.95 & \textbf{1.00} & 0.66 & 0.56 & 0.51 & 0.96 \\
\cline{2-8}
& mean & 0.95 & \textbf{1.00} & 0.66 & 0.54 & 0.51 & 0.87 \\
& std dev & 0.00 & 0.00 & 0.00 & 0.02 & 0.01 & 0.13 \\
\midrule

\multirow{5}{*}{Gaussian}
& run 1 & 0.94 & \textbf{1.00} & 0.68 & 0.54 & 0.50 & \textbf{1.00} \\
& run 2 & 0.94 & \textbf{1.00} & 0.68 & 0.51 & 0.50 & \textbf{1.00} \\
& run 3 & 0.94 & \textbf{1.00} & 0.68 & 0.56 & 0.51 & \textbf{1.00} \\
\cline{2-8}
& mean & 0.94 & \textbf{1.00} & 0.68 & 0.54 & 0.50 & \textbf{1.00} \\
& std dev & 0.00 & 0.00 & 0.00 & 0.02 & 0.01 & 0.00 \\
\midrule

\multirow{5}{*}{outliers}
& run 1 & 0.82 & \textbf{0.93} & 0.62 & 0.55 & 0.49 & 0.70 \\
& run 2 & 0.82 & \textbf{0.93} & 0.62 & 0.51 & 0.50 & 0.51 \\
& run 3 & 0.82 & \textbf{0.93} & 0.62 & 0.54 & 0.41 & 0.44 \\
\cline{2-8}
& mean & 0.82 & \textbf{0.93} & 0.62 & 0.53 & 0.47 & 0.55 \\
& std dev & 0.00 & 0.00 & 0.00 & 0.02 & 0.05 & 0.13 \\

\bottomrule
\end{tabular}
\end{table}

\subsection{Training curves}
\label{app_holes_training_curves}

Figure~\ref{fig_holes_training_curves} shows the training curves for the NN and PointNet pipelines. The training set performance of MLPs (shallow and deep) continues improving over epochs, but the validation set performance quickly saturates and stops improving after a few epochs. PointNet performs well on this task, already after a short number of training epochs. We do not include training curves for the PH and ML pipelines, as these are based on SVMs.

\begin{figure}[h]
\centering
\begin{tabular}{ccc}
NN shallow & NN deep & PointNet  \\ 
\includegraphics[width = 0.3 \linewidth]{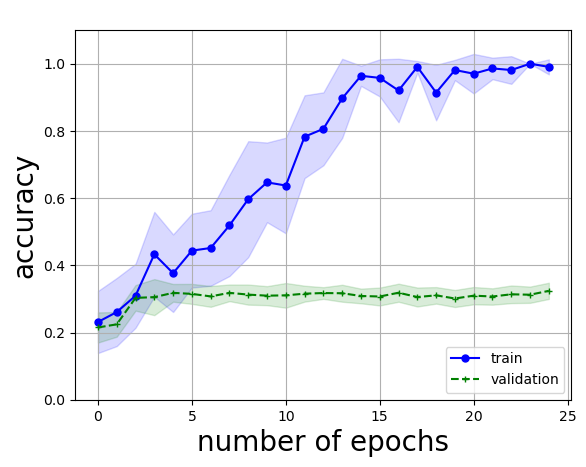} &
\includegraphics[width = 0.3 \linewidth]{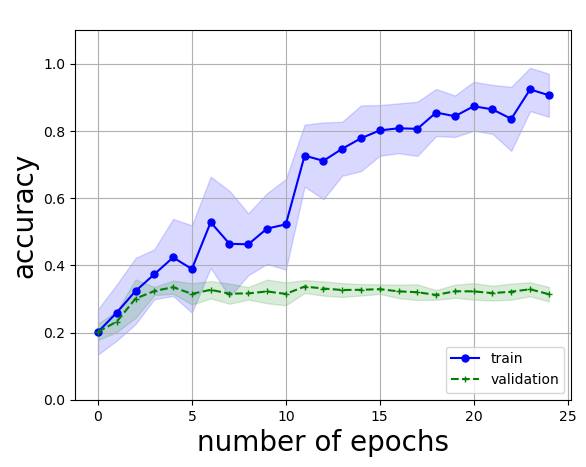} &
\includegraphics[width = 0.3 \linewidth]{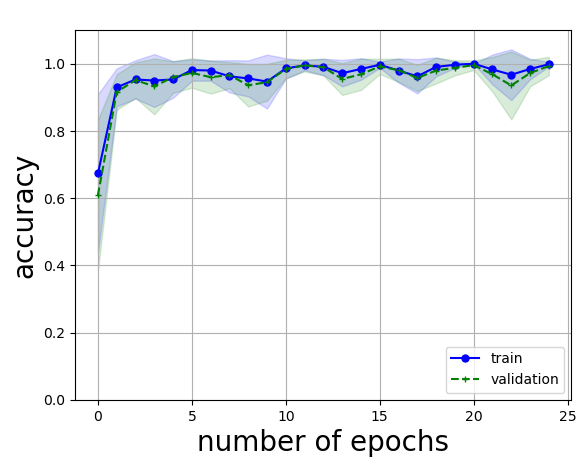} \\
\end{tabular}
\caption{Training curves for the detection of the number of holes.}
\label{fig_holes_training_curves}
\end{figure}

\subsection{Learning curves}
\label{app_holes_learning_curves}

Figure~\ref{fig_holes_learning_curves} shows the learning curves for every pipeline; i.e., the test accuracy of the trained pipelines depending on the total amount of training data. This serves to evaluate the data efficiency of the different methods. 
The PH-approaches perform well even for a small number of training point clouds. PointNet also has good performance, although it requires more training data. The other approaches (NN shallow, NN deep, and ML) have poor performance, which does not improve when more training data is available. An explanation for this is that these methods are based on distance matrices and hence cannot directly take advantage of the permutation symmetry of point clouds.

\begin{figure}[h]
\centering
\begin{tabular}{cccccc}
PH simple & PH & ML \\ 
\includegraphics[width = 0.3 \linewidth]{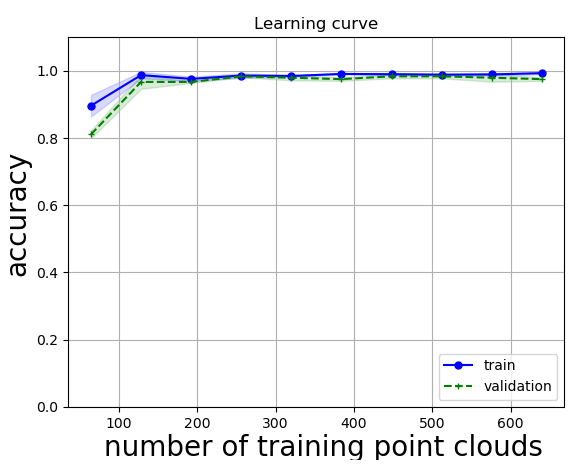} &
\includegraphics[width = 0.3 \linewidth]{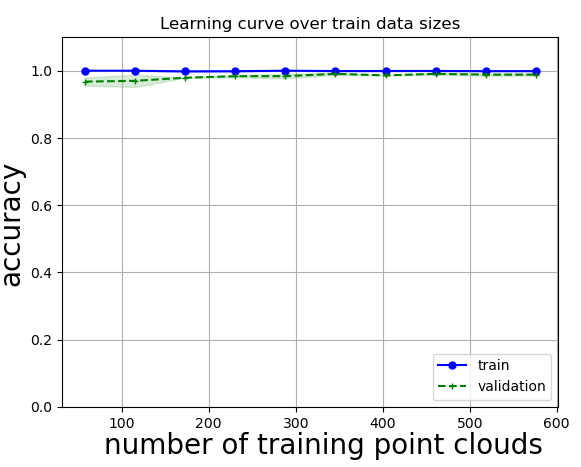} &
\includegraphics[width = 0.3 \linewidth]{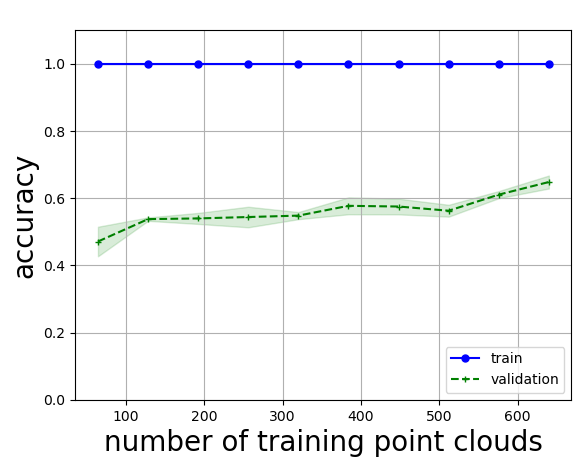} \\
\\
NN shallow & NN deep & PointNet  \\ 
\includegraphics[width = 0.3 \linewidth]{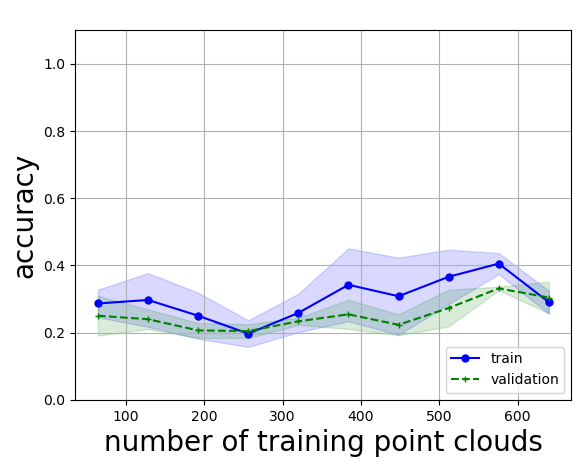} &
\includegraphics[width = 0.3 \linewidth]{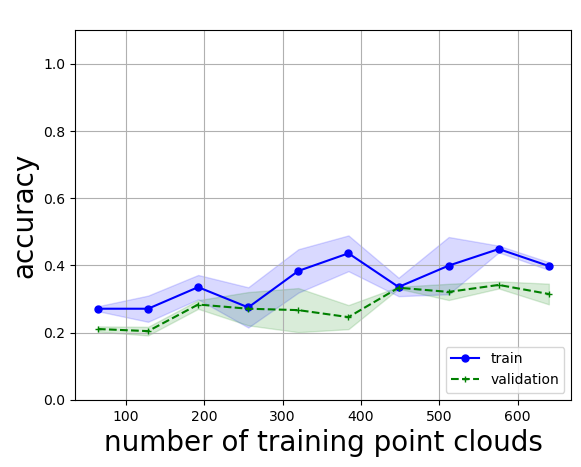} &
\includegraphics[width = 0.3 \linewidth]{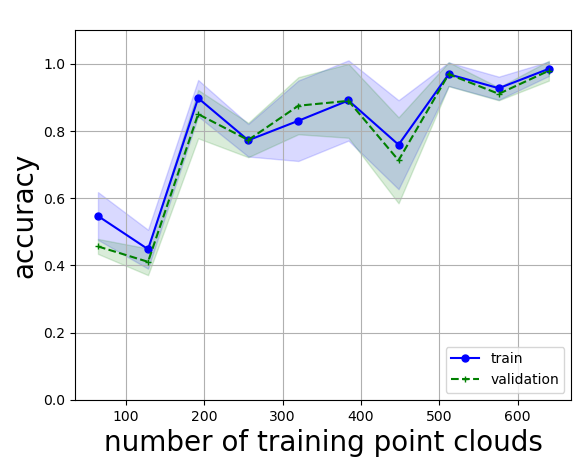} \\
\end{tabular}
\caption{Learning curves for the detection of the number of holes.}
\label{fig_holes_learning_curves}
\end{figure}

\subsection{Computational resources}
\label{app_holes_comp}

Figure~\ref{fig_holes_comp} visualizes the computational efficiency and memory usage. We see that PH pipeline also performs better with respect to these criteria in comparison to the other methods. The hyperparameter tuning of the PH pipeline does take time (as we consider a wide range of parameters for the different persistence signatures), but Figure~\ref{fig_holes_results} shows that even PH simple, where the SVM is used directly on the lifespans of the 10 most persisting cycles (without any tuning of PH-related parameters) performs well.

We note that the difference in the memory usage for data comes from the different types of input that are considered by different pipelines: PDs (lists of persistence intervals) for PH simple and PH, $100 \times 100$ distance matrices for ML and NNs, and $1000 \times 3$ point clouds for PointNet (Appendix~\ref{app_exp_pipelines}).

\begin{figure}[h]
\centering
\begin{tabular}{cc}
\includegraphics[height = 0.285 \linewidth]{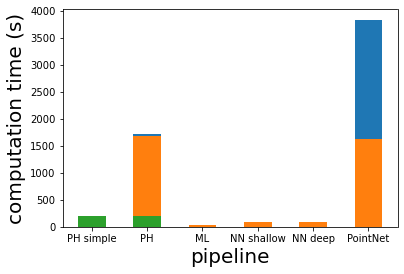} &
\includegraphics[height = 0.285 \linewidth]{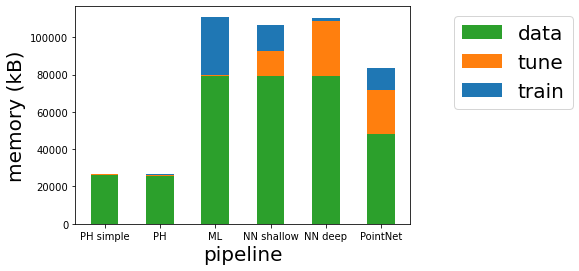} \\
\end{tabular}
\caption{Computational resources for the detection of the number of holes.}
\label{fig_holes_comp}
\end{figure}

\FloatBarrier 
\section{Additional experimental details for curvature}
\label{app_curvature}

\subsection{Pipeline}
\label{app_curvature_pipeline}

Before we visualize the PH pipeline, we give an illustrative figure that provides some intuition on why PH can detect curvature (Figure~\ref{fig_curvature_intuition}).

\begin{figure}[h]
\centering
\fbox{
\begin{tikzpicture}

\coordinate (A) at (0, 0);
\coordinate (B) at (2, 0);
\coordinate (C) at (1, 1.5);
\path [red, bend left] (A) edge (B);
\path [red, bend left] (B) edge (C);
\path [red, bend left] (C) edge (A);
\node [circle] at (A) {}; 
\node [circle] at (B) {};
\node [circle] at (C) {};

\coordinate (D) at (3, 0);
\coordinate (E) at (5, 0);
\coordinate (F) at (4, 1.5);
\path [blue] (D) edge (E);
\path [blue] (E) edge (F);
\path [blue] (F) edge (D);
\node [circle] at (D) {}; 
\node [circle] at (E) {};
\node [circle] at (F) {};

\coordinate (G) at (6, 0);
\coordinate (H) at (8, 0);
\coordinate (I) at (7, 1.5);
\path [green!50!black, bend right] (G) edge (H);
\path [green!50!black, bend right] (H) edge (I);
\path [green!50!black, bend right] (I) edge (G);
\node [circle] at (G) {}; 
\node [circle] at (H) {};
\node [circle] at (I) {};

\end{tikzpicture}
}
\caption{Intuition behind curvature detection with PH. For triangles embedded on a manifold with constant negative (left, in red), zero (middle, in blue) and positive (right, in green) curvature, the length of triangle edges clearly reflect the underlying curvature. Since persistence captures the length of these edges (when the triangle vertices merge into a  component), PH can be used to detect curvature.}
\label{fig_curvature_intuition} 
\end{figure}
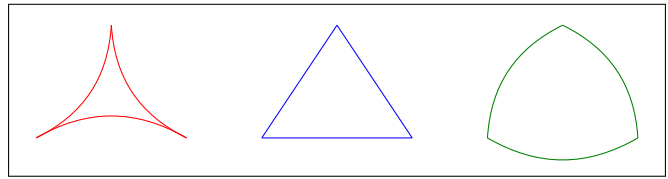

The PH pipeline to detect curvature (Section~\ref{section_curvature}) is visualized in Figure~\ref{fig_curvature_pipeline}. The example point cloud shown in the figure is in the Euclidean plane. We start by calculating the Euclidean distance matrix, and then construct the Vietoris-Rips filtration from these distances, which approximates the point cloud at different scales. 0-dimensional PD registers one persisting cycle reflecting the single component of the disk (which we ignore, since it is shared by every disk in the data and thus does not contribute to the classification), and many other  components which have a short lifespan as they get connected to other point-cloud points early in the filtration. There are no persistent 1-dimensional holes since disks are contractible, but there are many holes with short persistence. PDs are then transformed to a vector summary such as a PI.

\begin{figure}[h]
\centering
\fbox{
\begin{tikzpicture}[auto, >=latex'] 
\tikzstyle{block}=[draw, shape=rectangle, rounded corners=0.5em, fill = blue!15!white];
\node [block] (pc) {point cloud};
\node [block, right = 1.5em of pc] (distance) {Euclidean, spheric or hyperbolic distance matrix};
\node [block, right = 1.5em of distance] (filtration) {Vietoris-Rips filtration};
\node [block, below = of filtration.east, anchor = east] (pd) {0- and 1-dim PD};
\node [block, below = 1.25em of distance] (ph) {0- and 1-dim lifespans, PI or PL};
\node [block, below = of pc.west, anchor = west] (ml) {SVM};
\node [above = 0.25em of pc] {\includegraphics[height = 0.125\linewidth]{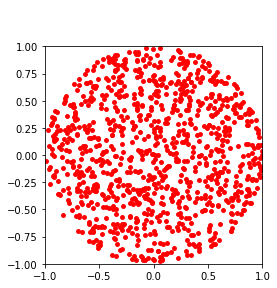}};
\node [above left = 0.25em and -7.5em of distance] {\includegraphics[height = 0.125\linewidth]{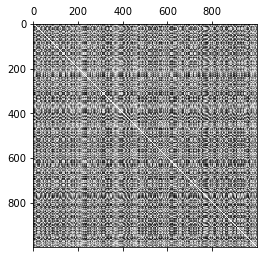}};
\node [above left = 0.25em and -10.25em of filtration] {\includegraphics[height = 0.125\linewidth]{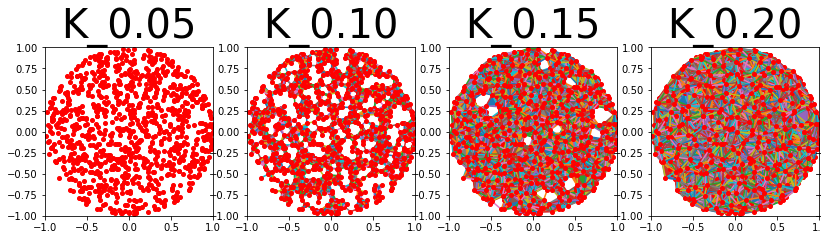}};
\node [below = -0.25em of pd] {\includegraphics[height = 0.125\linewidth]{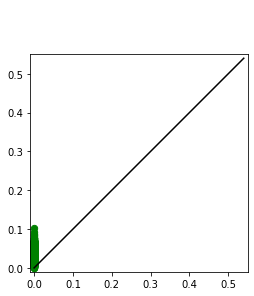} \includegraphics[height = 0.125\linewidth]{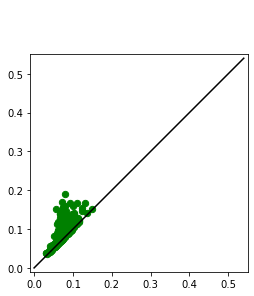}};
\node [below = 0em of ph] {\includegraphics[height = 0.115\linewidth]{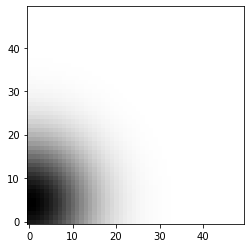} \includegraphics[height = 0.115\linewidth]{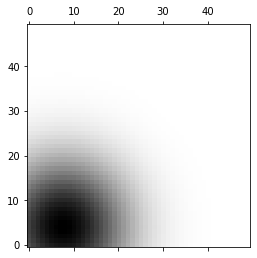}};
\path[draw,->] (pc) edge (distance)
(distance) edge (filtration)
(filtration) edge ([xshift=-1.1em, yshift=0.7em]pd)
(pd) edge (ph) 
(ph) edge (ml);
\end{tikzpicture}
}
\caption{Persistent homology pipeline to detect curvature.}
\label{fig_curvature_pipeline} 
\end{figure}

\subsection{Performance across multiple runs}
\label{app_curvature_performance}

Table~\ref{tab_curvature_accs} shows that (0-dimensional) PH outperforms the other machine- and deep-learning approaches for curvature detection, across multiple experimental runs. The poor performance of PointNet is not surprising, as it takes raw point clouds as input, i.e., their coordinates and Euclidean distances, and is therefore the only pipeline that does not exploit the spherical and hyperbolic distances for point clouds lying on manifolds with positive or negative curvature (for details, see Appendix~\ref{app_exp_pipelines}).

\begin{table}
\caption{Mean squared error across multiple runs for curvature detection.}
\label{tab_curvature_accs}
\centering
\begin{tabular}{lrrrrrrrrrr} 
\toprule

\textbf{Run} & \rotatebox{90}{\textbf{0-dim PH simple}} & \rotatebox{90}{\textbf{0-dim PH simple 10}} & \rotatebox{90}{\textbf{0-dim PH}} & \rotatebox{90}{\textbf{1-dim PH simple}} & \rotatebox{90}{\textbf{1-dim PH simple 10}} & \rotatebox{90}{\textbf{1-dim PH}} & \rotatebox{90}{\textbf{ML}} & \rotatebox{90}{\textbf{NN shallow}} & \rotatebox{90}{\textbf{NN deep}} & \rotatebox{90}{\textbf{PointNet}} \\
\midrule

run 1 & \textbf{0.06} & 0.21 & 0.08 & 0.34 & 0.29 & 0.18 & 0.34 & 0.42 & 0.46 & 12.28 \\
run 2 & \textbf{0.06} & 0.21 & 0.08 & 0.34 & 0.29 & 0.18 & 0.34 & 0.43 & 0.46 & 0.25 \\
run 3 & \textbf{0.06} & 0.21 & 0.08 & 0.34 & 0.29 & 0.18 & 0.34 & 0.66 & 0.43 & 578.28 \\
\cline{2-11}
mean & \textbf{0.06} & 0.21 & 0.08 & 0.34 & 0.29 & 0.18 & 0.34 & 0.50 & 0.45 & 196.94 \\
std dev & 0.00 & 0.00 & 0.00 & 0.00 & 0.00 & 0.00 & 0.00 & 0.14 & 0.02 & 330.31 \\

\bottomrule
\end{tabular}
\end{table}

\subsection{Computational resources}
\label{app_curvature_comp}

Figure~\ref{fig_curvature_comp} visualizes the computational time and memory usage of the different pipelines for this task. The superior performance of the PH pipelines in comparison to other methods (Figure~\ref{fig_curvature_results}) can come at a high cost with respect to the usage of computational resources. However, the simple 0-dim PH pipeline (that only focuses on the lifespans of the PH cycles), which achieves the best predictive power (Figure~\ref{fig_curvature_results}), is the most efficient.

\begin{figure}[h]
\centering
\begin{tabular}{cc}
\includegraphics[height = 0.35 \linewidth]{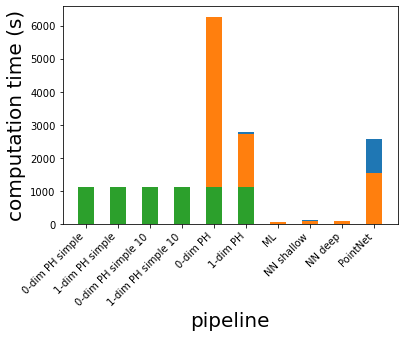} &
\includegraphics[height = 0.35 \linewidth]{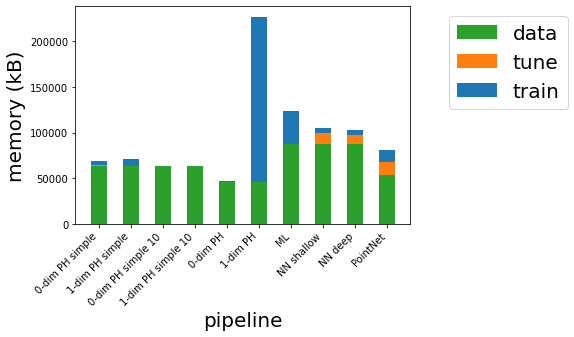} \\
\end{tabular}
\caption{Computational resources for the detection of curvature.}
\label{fig_curvature_comp}
\end{figure}

\FloatBarrier 
\section{Additional experimental details for convexity}
\label{app_convexity}

\subsection{Pipeline}
\label{app_convexity_pipeline}

A visual summary of the PH pipeline used for convexity detection (Section~\ref{section_convexity}) is given in Figure~\ref{fig_convexity_pipeline}. Every shape has at least one 0-dimensional cycle, i.e., connected component. For the given example point cloud, PD on the cubical complexes weighted by the tubular filtration from the line passing through  the bottom of the image will have a second persistent connected component. A positive persistence of the second most persisting cycle for at least some line indicates concavity.

\begin{figure}[h]
\centering
\fbox{
\begin{tikzpicture}[auto, >=latex']
\tikzstyle{block}=[draw, shape=rectangle, rounded corners=0.5em, fill = blue!15!white];
\node [block] (pc) {point cloud};
\node [block, right = 4em of pc] (image) {image};
\node [block, right = 13em of image] (filtration) {tubular filtration, for 9 lines};
\node [block, below = 4.35em of filtration.east, anchor = east] (pd) {0-dim PD, for 9 lines};
\node [block, text width = 5.5cm, above left = -1.9em and 2.75em of pd] (ph) {maximum lifespan of the 2nd most persisting cycle, across 9 lines};
\node [block, below = 4em of pc.west, anchor = west] (ml) {SVM};
\node [above = 0.25em of pc] {\includegraphics[height = 0.125\linewidth]{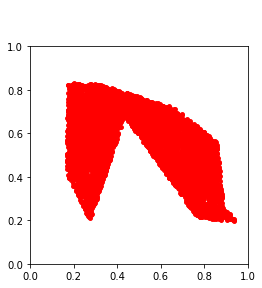}};
\node [above = 0.25em of image] {\includegraphics[height= 0.1\linewidth]{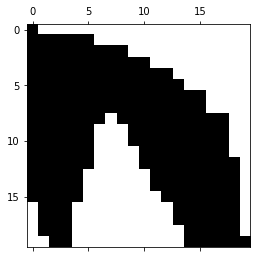}};
\node [text width = 5cm, above left = 2.25em and -7em of filtration] {\includegraphics[height = 0.2\linewidth]{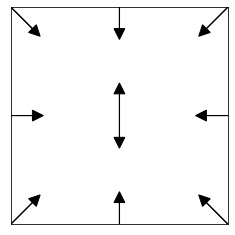}};

\node [text width = 5cm, above right = 6em and -16em of filtration] {\includegraphics[height = 0.25\linewidth]{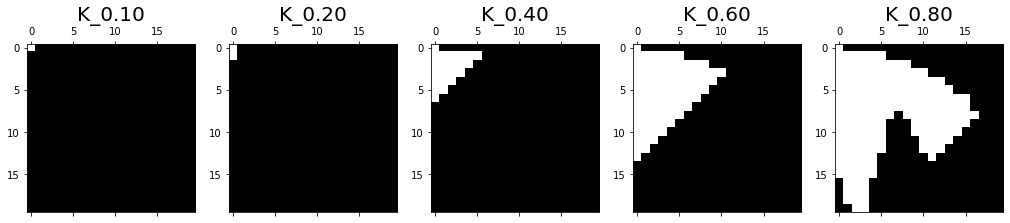} };
\node [text width = 5cm, above right = 4.5em and -16em of filtration] {\qquad \qquad \qquad \qquad \vdots};
\node [text width = 5cm, above right = 0.25em and -16em of filtration] {\includegraphics[height = 0.25\linewidth]{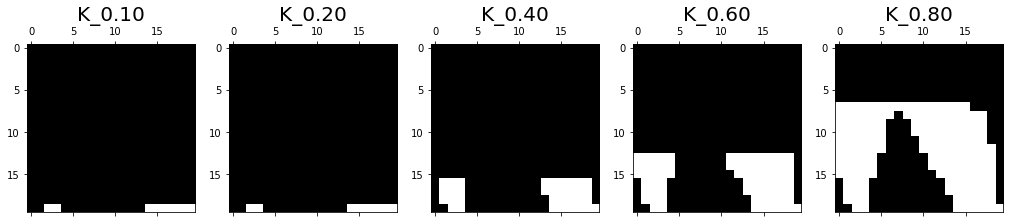}};

\node [below left = -0.25em and -3em of pd] {\includegraphics[height = 0.125\linewidth]{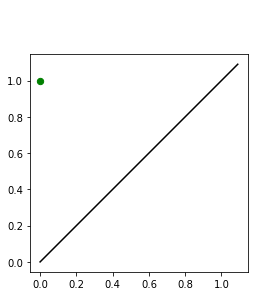}};
\node [below left = 2.5em and -5em of pd] {\dots};
\node [below left = -0.25em and -10em of pd] {\includegraphics[height = 0.125\linewidth]{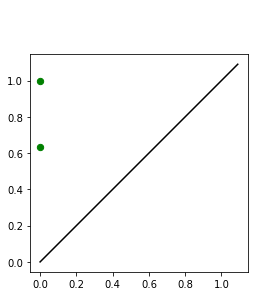}};

\node [below left = 0.5em and -17em of ph] {$0.63 = \max \{ 0.63, 0.3, 0.21, \dots, 0.00, 0.00 \}$};
\path[draw,->] (pc) edge (image)
(image) edge (filtration)
(filtration) edge ([xshift=-1.3em, yshift=0.9em]pd)
(pd) edge (ph) 
(ph) edge (ml);
\end{tikzpicture}
}
\caption{Persistent homology pipeline to detect convexity.}
\label{fig_convexity_pipeline}
\end{figure}

We note here that the convexity could also be detected with PH with respect to the Vietoris-Rips filtration, with some important adjustments. Indeed, \cite[Theorem 2]{chazal2011scalar} provides a guarantee that PH of any function $f$ and shape $M$ can be estimated using an algebraic construction based on Rips complexes from a point cloud $X$ which is a geodesic dense-enough sample of $M$ (and Theorems 3 and 4 in this paper obtain guarantees in scenarios where both function values and pairwise distances are approximate, i.e., defined on the point cloud). To do so to detect convexity, we cannot employ the standard (so-called vanilla) Vietoris-Rips simplicial complex that relies on the distance function, since all point cloud points show immediately at $b=0$ in the filtration (that all soon get connected into a single component), so that it never sees the two connected components in concave shapes, at any scale $r\in \mathbb{R},$ which are captured with cubical complexes. Filtering the point cloud points by their height (yielding a so called weighted Rips filtration) might capture the multiple connected components, but these components can get connected with an edge as soon as they are born, if they are close to each other with respect to Euclidean distance (Figure~\ref{fig_convexity_simplicial_cubical}). This can be resolved by considering the geodesic distance (the length of the shortest path along the manifold, or a graph), which will allow the multiple connected components to persist longer in the filtration. Indeed, the geodesic distance between points in the ``disconnected" regions in concave shapes (the clusters) is larger than the Euclidean distance, so that these only get connected later in the filtration.

\begin{figure}[h]
\centering
\includegraphics[width = \linewidth]{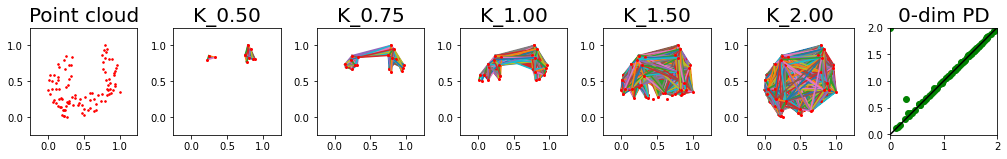} \\
\includegraphics[width = \linewidth]{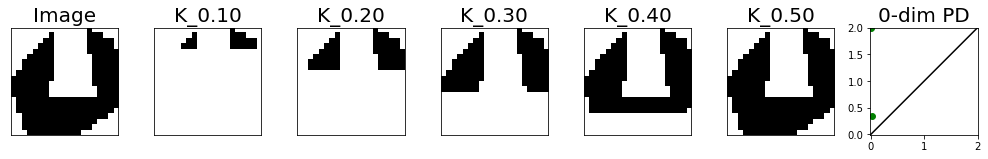} 
\caption{Convexity detection with PH on simplicial and cubical complexes. The concavity can be detected with the weighted Vietoris-Rips simplicial complex, with the tubular filtration function on the vertices (top row). In this figure, we consider the tubular function with respect to the horizontal line at the top of the point cloud. The filtration function on edges is defined according to the Euclidean distances, but in a way that ensures that an edge can only appear in the filtration after both vertices incident to this edge appear in the filtration (for details, see \cite{anai2020dtm}). However, these multiple connected components can still be connected with an edge, if they are close in the Euclidean space. This could be circumvented by considering the weighted Vietoris-Rips which relies on the geodesic distances (which are expensive to compute), or by considering cubical complexes instead (bottom row), where the connected components remain separate until they merge with the rest of the shape.}
\label{fig_convexity_simplicial_cubical}
\end{figure}

The important thing to keep in mind is to choose a filtration that will see disconnected components for concave shapes. We choose cubical complexes as they are more straightforward and do not involve the calculation of geodesic distances. Indeed, as the authors of \cite{chazal2011scalar} note, geodesic distances are not known in advance and have to be estimated through some neighborhood graph distance, and computing full pairwise geodesic distances is expensive \cite{mitchell1987discrete, kimmel2019processing} (e.g., there are deep learning efforts to estimate these geodesic distances on point clouds, such as \cite{he2019geonet, potamias2022graphwalks}).

Finally, we also note that the calculation of geodesic distances for curvature detection (Section~\ref{section_curvature}) was straightforward since the point clouds were sampled from unit disks from manifolds with constant curvature, which enabled us to directly rely on the analytical formulas for geodesic distance.

\subsection{Performance across multiple runs}
\label{app_convexity_performance}

For any experimental run, PH is better able to distinguish between convex and concave shapes than the other machine- and deep-learning pipelines (Table~\ref{tab_convexity_accs}).

\begin{table}
\caption{Accuracy across multiple runs for convexity detection.}
\label{tab_convexity_accs}
\centering
\begin{tabular}{llrrrrr} 
\toprule

\textbf{Experimental setting} & \textbf{Run} & \textbf{PH} & \textbf{ML} & \textbf{NN shallow} & \textbf{NN deep} & \textbf{PointNet} \\
\midrule

\multirow{5}{2cm}{train = regular test = regular}
& run 1 & \textbf{1.00} & 0.74 & 0.72 & 0.72 & \textbf{1.00} \\
& run 2 & \textbf{1.00} & 0.74 & 0.77 & 0.60 & \textbf{1.00} \\
& run 3 & \textbf{1.00} & 0.74 & 0.75 & 0.75 & \textbf{1.00} \\
\cline{2-7}
& mean & \textbf{1.00} & 0.74 & 0.75 & 0.69 & \textbf{1.00} \\
& std dev & 0.00 & 0.00 & 0.02 & 0.08 & 0.00 \\
\midrule

\multirow{5}{2.2cm}{train = random test = random}
& run 1 & \textbf{0.85} & 0.56 & 0.60 & 0.46 & 0.40 \\
& run 2 & \textbf{0.85} & 0.56 & 0.55 & 0.52 & 0.61 \\
& run 3 & \textbf{0.85} & 0.56 & 0.54 & 0.59 & 0.71 \\
\cline{2-7}
& mean & \textbf{0.85} & 0.56 & 0.56 & 0.52 & 0.57 \\
& std dev & 0.00 & 0.00 & 0.03 & 0.06 & 0.16 \\
\midrule

\multirow{5}{2cm}{train = regular test = random}
& run 1 & \textbf{0.78} & 0.59 & 0.59 & 0.59 & 0.51 \\
& run 2 & \textbf{0.78} & 0.59 & 0.56 & 0.60 & 0.47 \\
& run 3 & \textbf{0.78} & 0.59 & 0.57 & 0.57 & 0.57 \\
\cline{2-7}
& mean & \textbf{0.78} & 0.59 & 0.57 & 0.59 & 0.52 \\
& std dev & 0.00 & 0.00 & 0.02 & 0.08 & 0.05 \\
\midrule

\multirow{5}{2.2cm}{train = random test = regular}
& run 1 & \textbf{0.96} & 0.54 & 0.55 & 0.49 & 0.54 \\
& run 2 & \textbf{0.96} & 0.54 & 0.54 & 0.42 & 0.57 \\
& run 3 & \textbf{0.96} & 0.54 & 0.56 & 0.52 & 0.52 \\
\cline{2-7}
& mean & \textbf{0.96} & 0.54 & 0.55 & 0.48 & 0.54 \\
& std dev & 0.00 & 0.00 & 0.01 & 0.05 & 0.02 \\

\bottomrule
\end{tabular}
\end{table}

\subsection{Computational resources}
\label{app_convexity_comp}

Results related to the computational efficiency of the different approaches (trained on regular, and tested on regular shapes) are summarized in Figure~\ref{fig_convexity_comp}. In this case, the PH pipeline significantly outperforms the other methods, since it relies on a scalar summary of a point cloud (the maximum lifespan of the second most persisting connected component, across 9 tubular filtration function lines, see Section~\ref{section_convexity} and Figure~\ref{fig_convexity_pipeline}). On the other hand, PointNet relies on raw point clouds and therefore has a very high memory consumption, since point clouds have $5\,000$ points for this task (compared to $1\,000$ points for the detection of the number of holes, or $500$ points for curvature detection).

\begin{figure}[h]
\centering
\begin{tabular}{cc}
\includegraphics[height = 0.235 \linewidth]{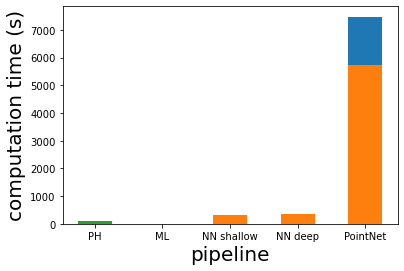} &
\includegraphics[height = 0.235 \linewidth]{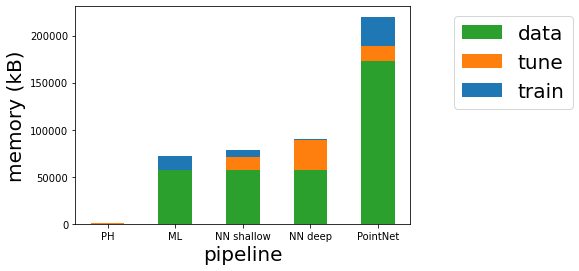} \\
\end{tabular}
\caption{Computational resources for the detection of convexity.}
\label{fig_convexity_comp}
\end{figure}

\subsection{Mislabeled point clouds}
\label{app_convexity_wrong}

In order to gain a better understanding of the performance and limitations of our PH pipeline, we look at some examples of mislabeled point clouds.  Figure~\ref{fig_convexity_wrong} shows a few point clouds sampled from concave shapes that are erroneously classified as convex by PH pipeline (trained on regular, and tested on random shapes). The figure also clearly suggests that considering additional lines for the tubular filtration function would resolve these issues.

\begin{figure}[h]
\centering
\begin{tabular}{ccccc}
\includegraphics[height = 0.16 \linewidth]{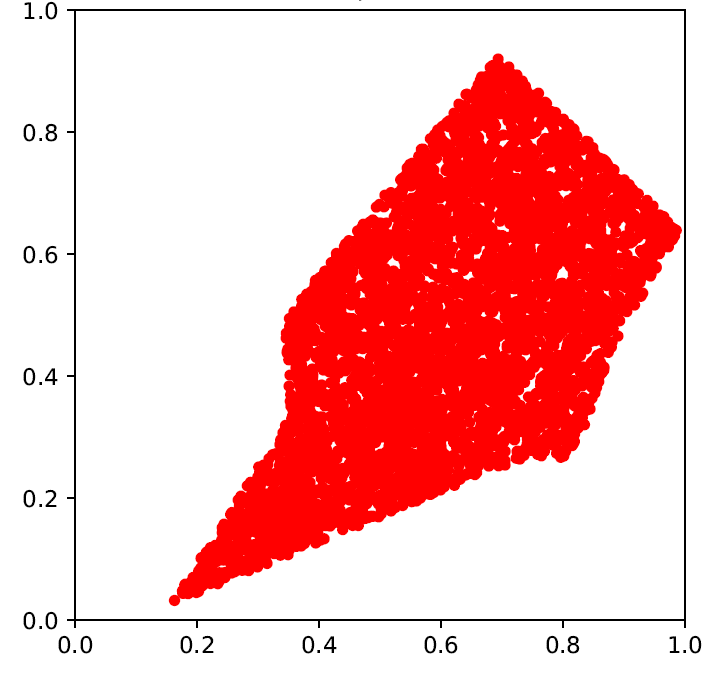} &
\includegraphics[height = 0.16 \linewidth]{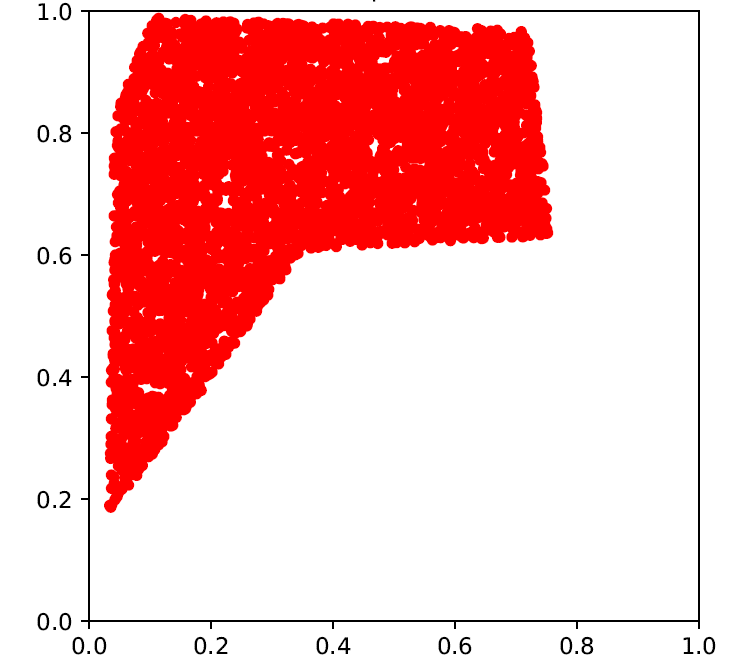} &
\includegraphics[height = 0.16 \linewidth]{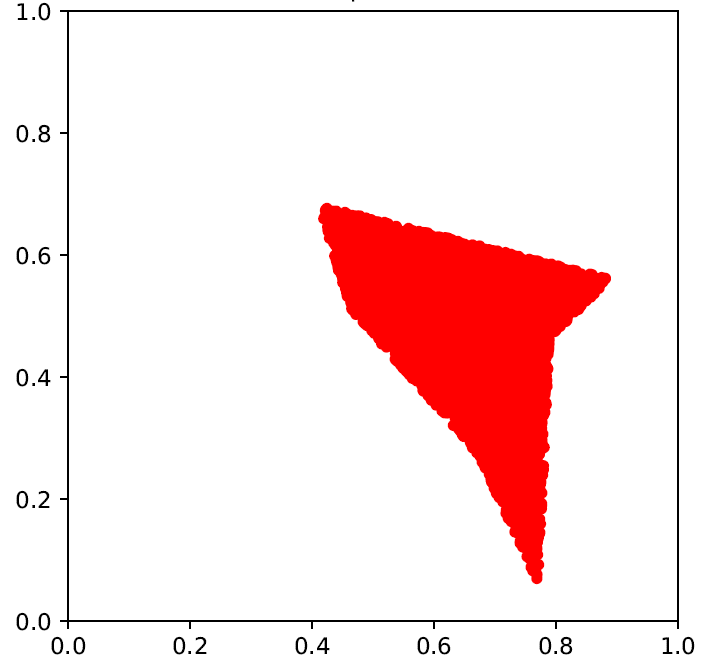} &
\includegraphics[height = 0.16 \linewidth]{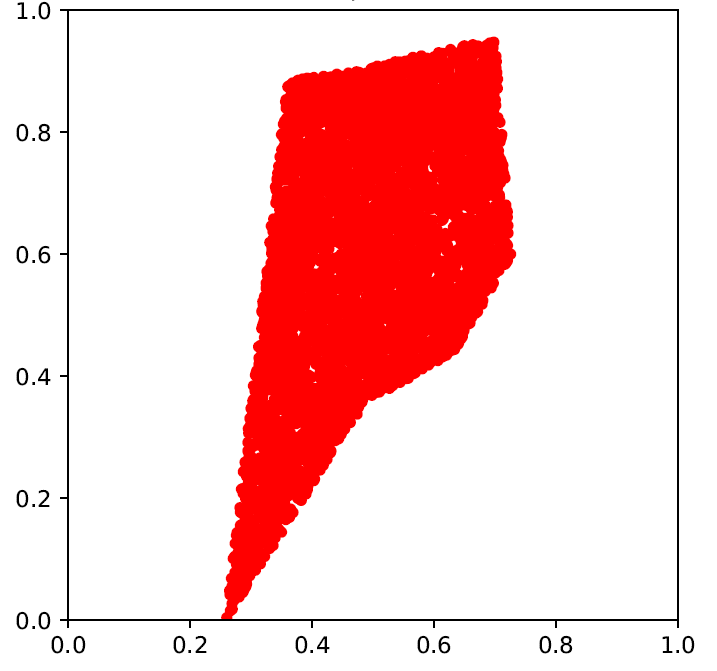} &
\includegraphics[height = 0.16 \linewidth]{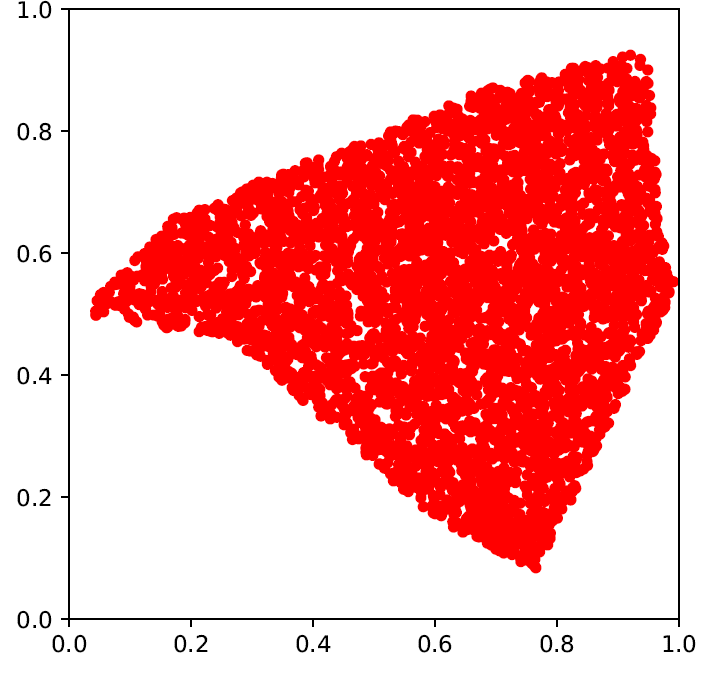} \\
\end{tabular}
\caption{Examples of point clouds from concave shapes incorrectly classified as convex.}
\label{fig_convexity_wrong}
\end{figure}

\FloatBarrier 
\section{Guidelines for persistent homology in applications}
\label{app_guidelines}

Our results demonstrate that PH can be successful in applications for which detecting the number of holes, curvature and convexity is important.  Based on our findings, we delineate guidelines for the choice of filtrations and signatures, the input and output of PH pipelines, and draw a better understanding of the topology and geometry properties that are captured by long and short persistence intervals (see Figure~\ref{fig_guidelines}).

We again note here that we use the alpha simplicial complex for the detection of number of holes in order to improve the computational efficiency, but that the same can be done with the standard Vietoris-Rips filtration. In addition, we discuss in Appendix~\ref{app_convexity_pipeline} that convexity can alternatively be detected with the weighted Vietoris-Rips filtration, filtered by the tubular function, and relying on geodesic distances.

\begin{figure}[h]
\centering

\fbox{
\begin{tikzpicture}[auto, >=latex', node distance=.75cm]
\tikzstyle{block}=[draw, shape = rectangle, rounded corners = 0.5em, fill = blue!15!white, text width=9em, align = center, minimum height=1.5em];
\tikzstyle{block vertical}=[draw, shape=rectangle, fill = white, text width = 2em, align = center];

\node [block] (holes) {(Betti $\beta_k$) number of $k$-dimensional cycles};
\node [block, right = of holes] (curvature) {curvature};
\node [block, right = of curvature] (convexity) {convexity};

\node [block, below = 4em of holes] (filtration-holes) {alpha simplicial complex, filtered by Euclidean distance};
\node [block, below = 4.5em of curvature] (filtration-curvature) {Rips simplicial complex, filtered by geodesic distance};
\node [block, below = 4.5em of convexity] (filtration-convexity) {cubical complex, filtered by tubular function};

\node [block, below = 11em of holes] (ph-holes) {longest $k$-dimensional cycles};
\node [block, right = of ph-holes] (ph-curvature) {many short $0$- and $1$-dimensional cycles};
\node [block, right= of ph-curvature] (ph-convexity) {second longest $0$-dimensional cycle};

\path[draw,->] (holes) edge (filtration-holes)
(filtration-holes) edge (ph-holes)
(curvature) edge (filtration-curvature)
(filtration-curvature) edge (ph-curvature)
(convexity) edge (filtration-convexity)
(filtration-convexity) edge (ph-convexity);

\node[block vertical, left = of holes] (signal) {\rotatebox{90}{SIGNAL}};
\node[block vertical, below = 1em of signal] (filtration) {\rotatebox{90}{FILTRATION}};
\node[block vertical, below = 1em of filtration] (ph) {\rotatebox{90}{SIGNATURE}};

\end{tikzpicture}
}

\caption{Persistent homology can be useful in applications where $k$-dimensional cycles, curvature or convexity are important features. The choice of filtration and persistence signature, including the focus on the long and/or short persistence intervals, depends on the signal of the particular application.}
\label{fig_guidelines}
\end{figure}
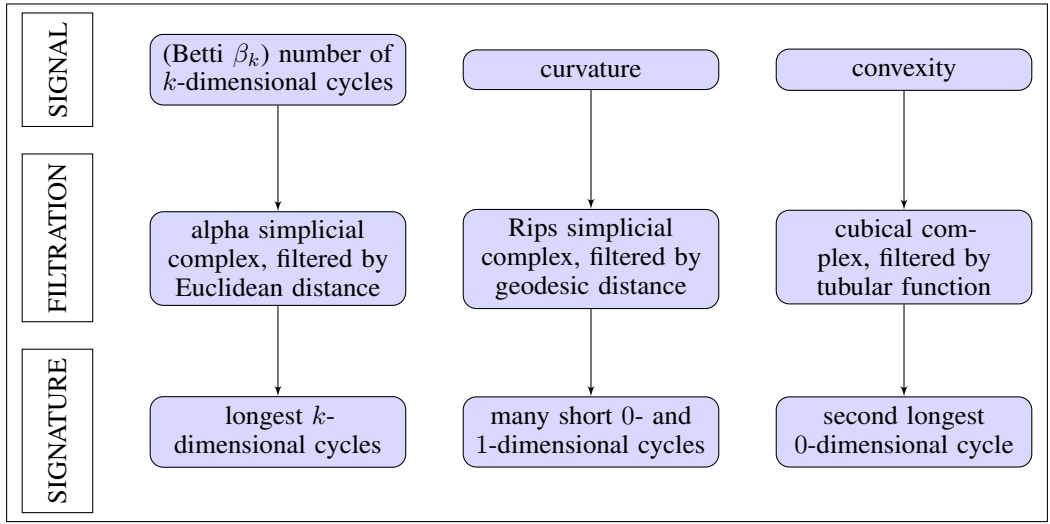

\subsection{Adjustments of PH pipeline for related applications}
\label{app_guidelines_adjustments}

Some obvious adjustments to the guidelines from Figure~\ref{fig_guidelines} can be made for applications related to the ones that we consider here. Some possible adjustments include the following.

\begin{itemize}[leftmargin=*]
\item If it is expected that the data set is noisy, the suggested filtration function should be weighted by density to achieve robustness to noise (as in Section~\ref{section_holes}).

\item In our experiments, we focused our attention on the PH information relevant to the individual problem at hand, but for other related applications, one might need to consider a different type of information given by PH. For example, if we do not only aim to distinguish between convex and concave shapes, but rather to capture more information about the possibly many concavities, we should not restrict our attention only to the second most persisting cycle, nor consider the maximum across filtration function directions. Instead, it would be useful to take all PH intervals into account for such an application.

\item If there are multiple sources of differences in the data, it can be a good idea to combine the different pipelines. For example, if two classes can be differentiated with some concavities, $0$-dimensional PH on the tubular filtration will be useful, but if it is also the shape curvature that can help make a distinction, this information can be concatenated with $0$- and $1$-dimensional PH on Vietoris-Rips filtration. 
\end{itemize}

\subsection{Discussion of PH pipeline for other applications} 
\label{app_guidelines_discussion}

\paragraph{Step 1: Signal} Figure~\ref{fig_guidelines} and the discussion above clearly indicate that, when faced with a new problem, it is essential to first try to identify the important information, the signal. To illustrate this more clearly, we list some examples of very different types of signal in Appendix~\ref{app_guidelines_ex_signal_top}, Appendix~\ref{app_guidelines_ex_signal_geo}, Appendix~\ref{app_guidelines_ex_signal_top_geo}. Once there is some understanding of the signal, the next steps are to choose the filtration and signature accordingly.

\paragraph{Step 2: Filtration} The aim is for the filtration to capture the signal. For instance, the Vietoris-Rips filtration encodes the size of cycles, while the height or tubular filtration encodes their position. The choice of filtration also influences which type of geometric properties will be captured by long or short persistence intervals. To illustrate the importance of the choice of filtration for the interpretation of long and short intervals, we consider the example point cloud in Figure~\ref{fig_ph_pipeline}. PH with respect to any meaningful filtration can detect the topology of the underlying shape, i.e., the two holes. However, for the height filtration function from the top of the image, the small circle would have a longer lifespan of the two (as it is born earlier in the filtration), and the large circle can have a seemingly very short lifespan (as it is only born at the bottom of the image). For the Vietoris-Rips filtration it is the opposite (a small cycle has short persistence), and PD on the height filtration from the bottom of the image would see cycles of comparable persistence.

\paragraph{Step 3: Signature} The choice of persistence signature and the corresponding metric further influences the emphasis on long or short persistence intervals. The  Wasserstein distances \cite{carlsson2014topological} between PDs place more importance on long persistence, and the same is true for $L_p$ or $l_p$ distances between other common choices of persistence signatures, with the standard choice of parameters. However, similarly to our discussion above in \ref{app_guidelines_adjustments}, one might want to focus only on short intervals, e.g., by considering only the intervals with lifespan below a certain threshold (so that the distances would be computed between this simplified PH information). Some alternative ways to prioritize short intervals, or intervals of any persistence, or even birth or death value, is via persistence images with the appropriate choice of the weighing function \cite{adams2017persistence}, or stable ranks with appropriate densities \cite{chacholski2020metrics}. Note, however, that the stability results also depend on the choice of filtration, persistence signature and metric \cite{turkevs2021noise}. Finally, we note that there has recently been a lot of effort in trying to train neural networks to learn what the best PH signature is for specific types of applications \cite{royer2019atol, carriere2020perslay, montufar2020can, de2022ripsnet}.

In the remainder of this section, we consider a few hypothetical applications to discuss the relevance of signal, filtration and signature that we hope will be useful for practitioners. In particular, the examples highlight that the importance of long and short persistence intervals depend on the particular application domain. In this context, it is sensible to try to understand the nature of information that is captured with PH (e.g., topological or geometric, any of which might or not be important). The examples thus help us to refine an ongoing discussion in the field about the information detected by intervals of a specific length in a PD:

\begin{itemize}
\item {\bf Long persistence intervals as signal.} Indeed, this is true in examples from Figure~\ref{fig_guidelines_ex1} (when long intervals capture important topology) or Figure~\ref{fig_guidelines_ex4} (where long intervals capture important geometry). However, an example in Figure~\ref{fig_guidelines_ex5} (together with results in Section~\ref{section_curvature} and Section~\ref{section_convexity}) highlights that important information can be encoded in short intervals. 

\item {\bf Short persistence intervals as noise.} Figure~\ref{fig_guidelines_ex5} is an example where important information is captured by short intervals (or in this case, a short interval). This can also be seen in the experimental results in Section~\ref{section_curvature} and Section~\ref{section_convexity}.

\item  {\bf Long intervals capture topology.} An example in  Figure~\ref{fig_guidelines_ex4} highlights that long intervals, next to topology, also capture geometric information.

\item {\bf Many short intervals capture geometry.} An example in Figure~\ref{fig_guidelines_ex5} (together with results in Section~\ref{section_convexity}) shows that even a single short interval can capture (important) geometry. 
\end{itemize}

\subsubsection{Topology is important, geometry is irrelevant}
\label{app_guidelines_ex_signal_top}

In some applications, it might be useful to make no distinctions between a circle, a circle with a dent, the circle under translation or scaling, or a square (Figure~\ref{fig_guidelines_ex1}). In this case, ``shape'' is understood through the lens of topology --- more precisely, what we are interested in is what is called  ``homology-type'' ---, where one object can be deformed into another by bending, shrinking and expanding, but not tearing or gluing. Indeed, to a topologist, a coffee mug and a donut have the same shape.

\begin{figure}[h]
\centering
\includegraphics[width = \linewidth]{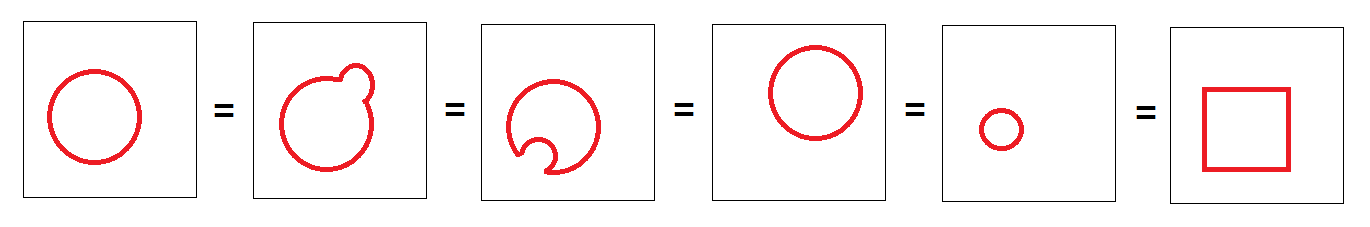}
\caption{An example of an application where topology is the signal. We consider all the shapes to be the same, i.e., to represent the same class of data, as they all have one connected component and one hole.}
\label{fig_guidelines_ex1}
\end{figure}

It is possible to obtain the same PH summaries for all of the shapes from  Figure~\ref{fig_guidelines_ex1}. Indeed, $1$-dimensional PDs with respect to the standard Vietoris-Rips filtration on a unit circle and a unit square sampled with same density (reflected in the birth values) could respectively be $\{(0.1, 1)\}$ and $\{(0.1, 1.41)\},$ since the death value reflects the size of the hole. However, we can focus on the cardinality $|\text{PD}|$ of PDs, that here only encodes topological information. Alternatively, we could rather consider PDs calculated on cubical complexes filtered by the binary or grayscale filtration.

Let us further assume that point clouds with multiple holes might be present in the data, but that the only relevant information is the presence of holes, and not their number (Figure~\ref{fig_guidelines_ex2}). An example of such application could be classification between chaotic and periodic (biological) time series, since the circularity of the so-called Taken's embedding point cloud reflect periodicity of the underlying time series \cite{perea2015sw1pers}. In this case, we can only focus on the maximum persistence $\max \{ l = d-b \mid (b, d) \in \text{PD} \}.$ Although PD captures a lot of topology and geometry, this choice of summary obviously ignores a lot of this information.

\begin{figure}[h]
\centering
\includegraphics[width = \linewidth]{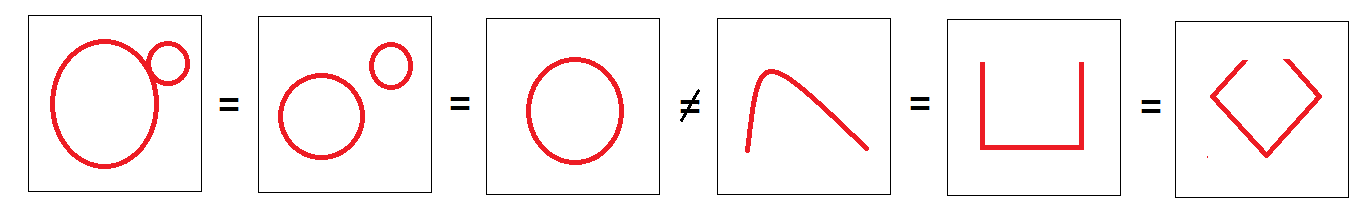}
\caption{An example of an application where the presence of holes is the signal. The first three shapes in the left part of the figure belong to the same class as there is at least one hole present, whereas the remaining three shapes belong to another class with no holes.}
\label{fig_guidelines_ex2}
\end{figure}

\subsubsection{Topology is irrelevant, geometry is important}
\label{app_guidelines_ex_signal_geo}

For other type of applications, the shapes from  Figure~\ref{fig_guidelines_ex1} might be representatives of different classes of objects (Figure~\ref{fig_guidelines_ex3}). Since they all have a single connected component and a single hole, the topological information has no use in discriminating between the classes. However, the geometric information about their size and position is useful.

\begin{figure}[h]
\centering
\includegraphics[width = \linewidth]{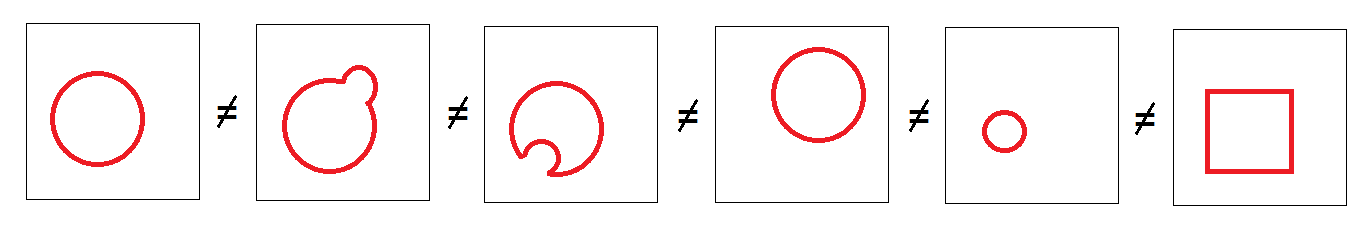}
\caption{An example of an application where geometry is the signal. In this case, every shape in the figure represents a different object, i.e., they all belong to different data classes.}
\label{fig_guidelines_ex3}
\end{figure}

\paragraph{Important geometry encoded in long intervals} Consider an example where every shape in the data set has only two holes (Figure~\ref{fig_guidelines_ex4}), and PH with respect to the Vietoris-Rips filtration. The two longest intervals reflect these two holes (topological information), but their lifespans reflect their size, and it is this geometric signal that can help discriminate between the shapes.

\begin{figure}[h]
\centering
\includegraphics[width = 0.45\linewidth]{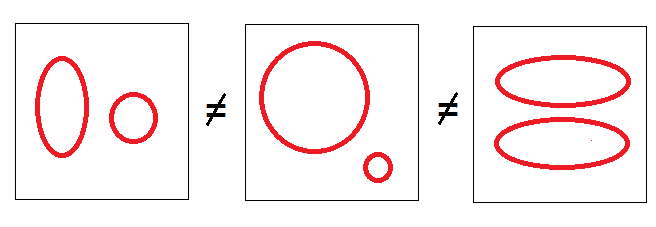}
\caption{An example of an application where the size of the holes is the signal. The three shapes all have two holes, but their size is meaningful for this application, so that they all belong to different data classes.}
\label{fig_guidelines_ex4}
\end{figure}

\paragraph{Important geometry encoded in a single interval with the shortest persistence} We consider a hypothetical cancer-detection application. Let us assume that the data set consists of medical images of some cells in the human body, which look like a certain number of holes (e.g., a grid-like structure). Now imagine that the only difference between the healthy and cancerous cells is the presence of a tiny hole somewhere in the image (which might correspond to some developing cancerous tissue) (Figure~\ref{fig_guidelines_ex5}). For PH with the Vietoris-Rips filtration, the lifespan of each cycle registers its size, but it is the very short persistence of the tiniest holes which would be the most important for this application, as it would be this local geometry signal that would allow to discriminate between the two classes of data, i.e., to detect the presence of cancer.

\begin{figure}[h]
\centering
\includegraphics[width = \linewidth]{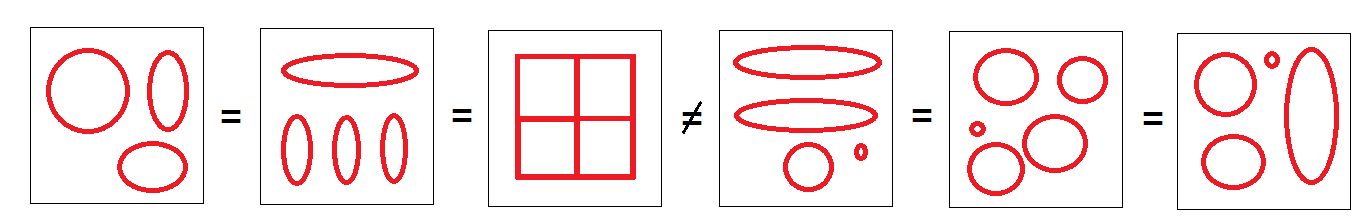}
\caption{An example of an application where the presence of a tiny hole is the signal. The first three shapes in the left part of the figure reflect the images of healthy cells, whereas the remaining three shapes indicate developing cancerous tissue.}
\label{fig_guidelines_ex5}
\end{figure}

For example, PH for healthy and cancerous cells can respectively have lifespans $(20, 20, 20, 20, 0)$ and $(20, 20, 20, 20, 0.05)$.  The stability theorems imply that the difference between the PH on the healthy and cancerous cells is ``small'' (or more precisely, it is limited by the difference in their filtrations), but this difference is important for this problem and hence not ``noise.''

PH can be successful for this task even if the number and size of holes varies across images of healthy cells. In this case, the lifespans for PH of healthy cells could, e.g., be $(20, 15, 12, 25, 0),$ $(13, 21, 15, 17, 0),$ and $(14, 15, 27, 20, 0.05),$ $(19, 21, 15, 17, 0.05),$ for cancerous cells. Here, the distance between the PH for healthy and cancerous cells is overwhelmed by the distance between the long cycles, that reflect irrelevant information for the problem. However, we could consider a PH signature that only focuses on short intervals, or choose PIs that give a greater weight to short intervals. Alternatively, if there is a number of labeled images available, the difference with respect to the short persistence interval can be learned.

In the same way, it might be the case that images can only be distinguished with a hole of medium persistence, and therefore the importance of different lifespans depends on the application, i.e., data set. If we know this a priori, we can use PIs and give the greatest weight to the intervals with the most distinctive persistence.

\subsubsection{Topology and geometry are important}
\label{app_guidelines_ex_signal_top_geo}

To conclude our guidelines, we consider an example of an application in which both topological and geometric information are important. Let us consider a classification problem where the shapes in Figure~\ref{fig_guidelines_ex6} represent different objects, i.e., different data classes. In this case, it is topology and geometry together that provide useful information. The standard choice of PH on the Vietoris-Rips filtration can help to distinguish between these objects. The PH signature should consider all persistence intervals, since, as discussed in Section~\ref{section_discussion}, geometry (reflecting the size of cycles) is captured in every persistence interval, while the topology is reflected in the number of long-enough intervals.

\begin{figure}[h]
\centering
\includegraphics[width = \linewidth]{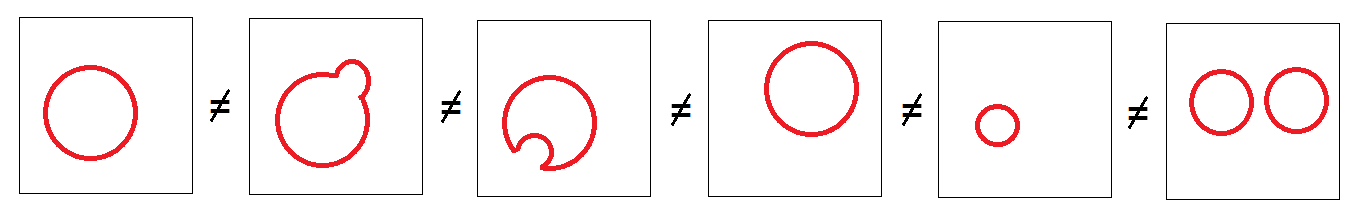}
\caption{An example of an application where topology and geometry are both signal.}
\label{fig_guidelines_ex6}
\end{figure}

\FloatBarrier 
\section{Persistent homology detects convexity in FLAVIA data set} 
\label{app_real_data}

In this section, we employ PH on the FLAVIA data set which consists of $1\,907$ $1200 \times 1600$ images of plant leaves \cite{wu2007leaf}. Figure~\ref{fig_flavia_data} shows a few examples of images in this data set. The goal of these experiments is to show that PH can be effective on real-world data, but also to illustrate the above guidelines about the appropriate choice of filtration and signature for a given application, and the importance of long and short intervals (Appendix~\ref{app_guidelines}). We focus on convexity detection, as this is the main contribution of our work.

\begin{figure}[h]
\centering
\begin{tabular}{p{2cm}p{2cm}p{2cm}p{2cm}p{2cm}p{3cm}}
& raw image & \hskip 0.75cm $X$ &\hskip 0.5cm $CH(X)$ & \hskip 0.25cm $CH(X) \setminus X$& \\
\end{tabular}
\includegraphics[width = 0.75 \linewidth]{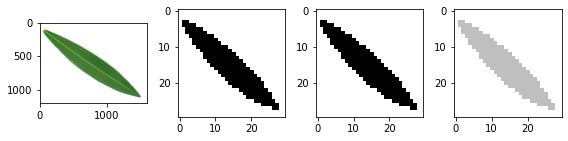}
\includegraphics[width = 0.75 \linewidth]{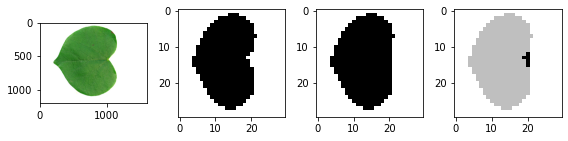}
\includegraphics[width = 0.75 \linewidth]{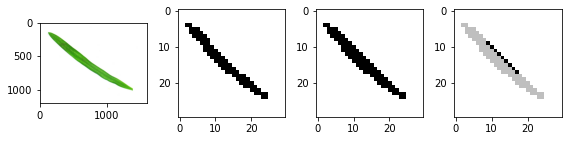}
\includegraphics[width = 0.75 \linewidth]{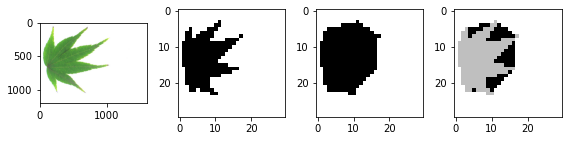}
\caption{A few example images from the FLAVIA leaf data set. The images are shown, from top to bottom, with a decreasing label, i.e., convexity measure $c(X),$ 1.00, 0.98, 0.89, 0.71. Note that the second image from the top is more convex than the third image, since the considered convexity measure $c(X)$ is given relative to the area of the leaf. The PH lifespans on the tubular filtration with respect to 9 different lines are respectively from top to bottom, [0.00, 0.00, 0.00, 0.00, 0.00, 0.00, 0.00, 0.00, 0.00], [0.00, 0.00, 0.00, 0.00, 0.00, 0.00, 0.00, 0.00, 0.00], [0.00, 0.00, 1.09, 0.00, 0.00,  0.00, 0.00, 0.00, 0.00], [0.00, 2.52, 10.08, 3.78, 0.00, 1.26, 0.00, 0.00, 0.00].}
\label{fig_flavia_data}
\end{figure}

We classify the leaves according to following measure of convexity: 

\begin{equation} 
\label{eq_conv_meas}
c(X) = \frac{\text{area}(X)}{\text{area} ( CH(X) ))},
\end{equation}

where $CH(X)$ is the convex hull of image $X.$ The convexity measure (\ref{eq_conv_meas}) is the most widely used in the literature \cite{zunic2004new}, and has been shown to be useful for plant species recognition \cite{kala2016leaf}. Note that we only use the above formula to properly label the data set (Figure~\ref{fig_flavia_data}), but that, deriving convexity information in such a way involves employing a convex hull algorithm.

In the simpler scenario of a binary classification between convex or concave shapes (i.e., signal is the simple: convex - yes or no), we could rely on the same pipeline as in Section~\ref{section_convexity}, where we consider the lifespan of the second most persisting connected component, and then store the maximum such value across all $9$ tubular filtration lines (Figure~\ref{fig_convexity_pipeline}, Figure~\ref{fig_flavia_ph}). This is sufficient information, since we are only interested in whether PH sees multiple connected components - source of a concavity, for \emph{at least one} line. However, the convexity measure (\ref{eq_conv_meas}), the signal in our application, provides a more detailed level of information (regression problem), so that we keep the lifespan of the second most persisting connected component \emph{for all} lines, in order to capture information about sources of concavities seen with respect to  any of the lines.

\begin{figure}[h]
\centering
\scalebox{0.86}{
\begin{tabular}{ccc}
\includegraphics[height = 0.125 \linewidth]{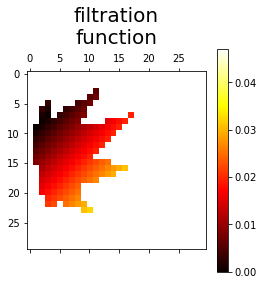} &
\includegraphics[height = 0.125 \linewidth]{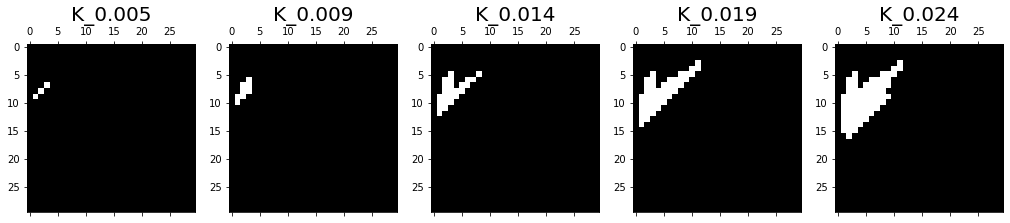} &
\includegraphics[height = 0.125 \linewidth]{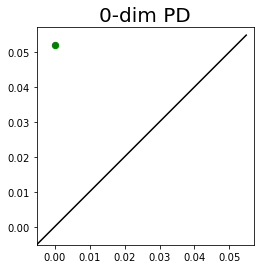} \\

\includegraphics[height = 0.125 \linewidth]{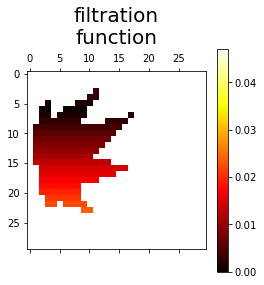} &
\includegraphics[height = 0.125 \linewidth]{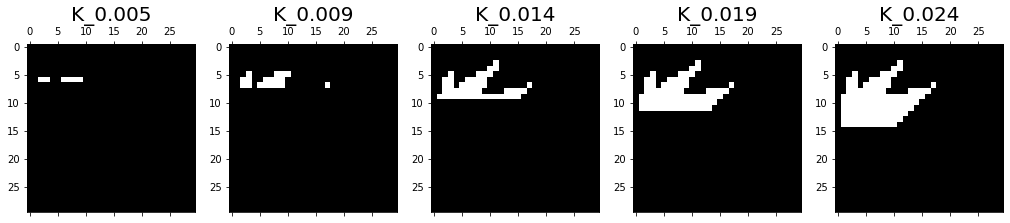} &
\includegraphics[height = 0.125 \linewidth]{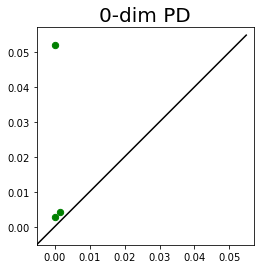} \\

\includegraphics[height = 0.125 \linewidth]{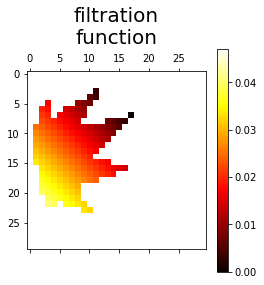} &
\includegraphics[height = 0.125 \linewidth]{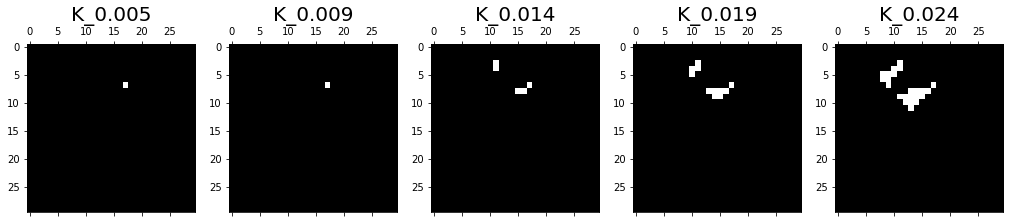} &
\includegraphics[height = 0.125 \linewidth]{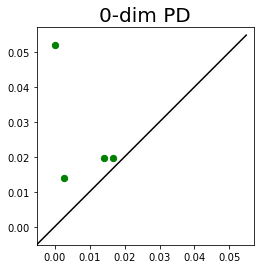} \\

\includegraphics[height = 0.125 \linewidth]{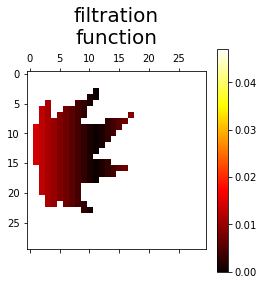} &
\includegraphics[height = 0.125 \linewidth]{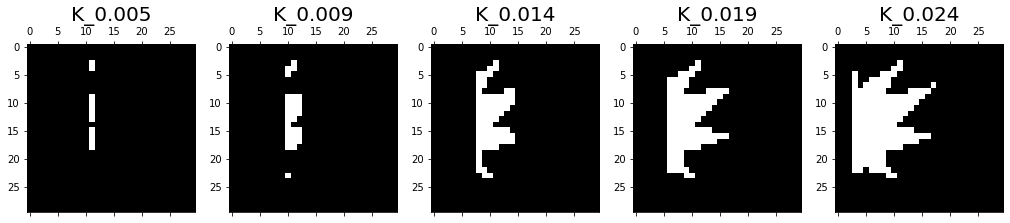} &
\includegraphics[height = 0.125 \linewidth]{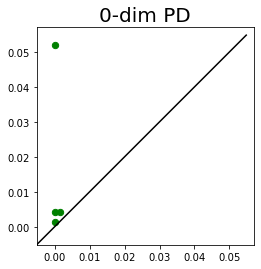} \\

\includegraphics[height = 0.125 \linewidth]{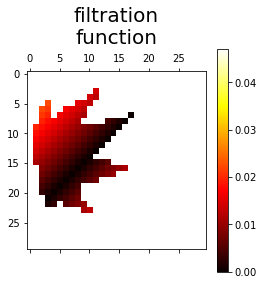} &
\includegraphics[height = 0.125 \linewidth]{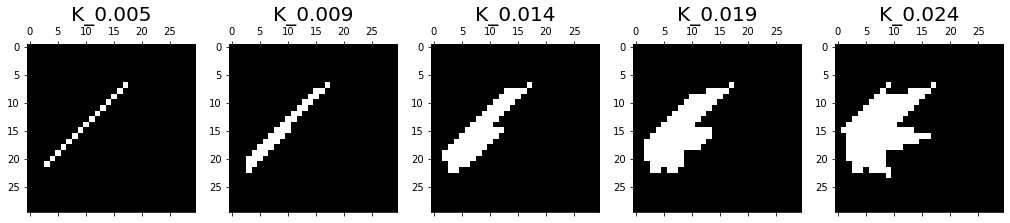} &
\includegraphics[height = 0.125 \linewidth]{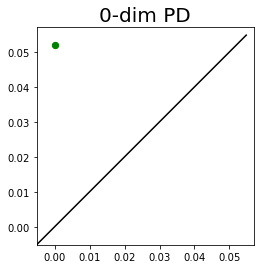} \\

\includegraphics[height = 0.125 \linewidth]{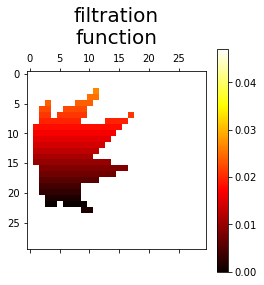} &
\includegraphics[height = 0.125 \linewidth]{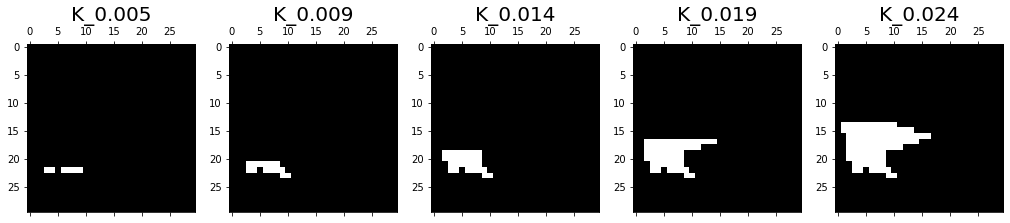} &
\includegraphics[height = 0.125 \linewidth]{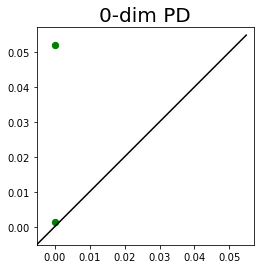} \\

\includegraphics[height = 0.125 \linewidth]{figures/flavia/fil_fun_3.png} &
\includegraphics[height = 0.125 \linewidth]{figures/flavia/fil_3.png} &
\includegraphics[height = 0.125 \linewidth]{figures/flavia/pd_3.png} \\

\includegraphics[height = 0.125 \linewidth]{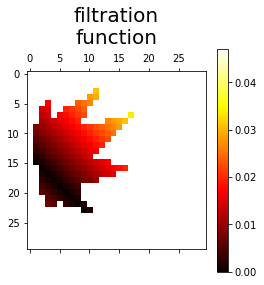} &
\includegraphics[height = 0.125 \linewidth]{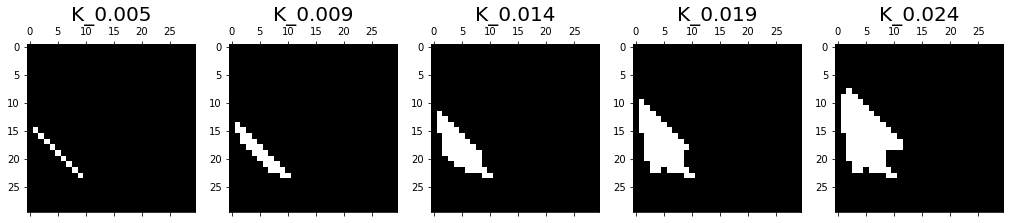} &
\includegraphics[height = 0.125 \linewidth]{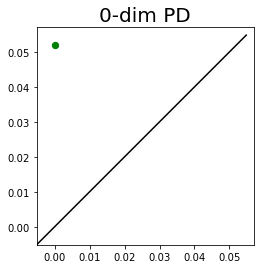} \\

\includegraphics[height = 0.125 \linewidth]{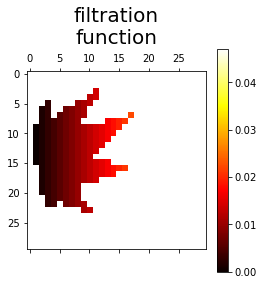} &
\includegraphics[height = 0.125 \linewidth]{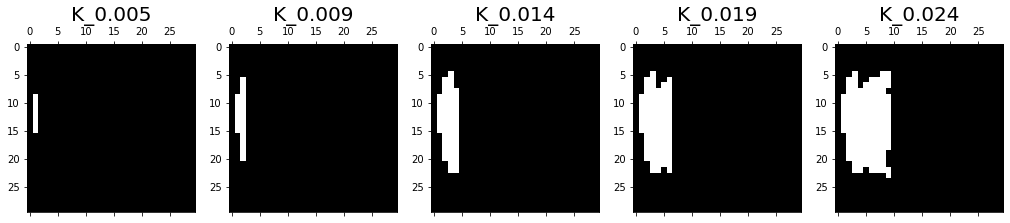} &
\includegraphics[height = 0.125 \linewidth]{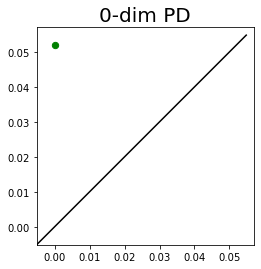} \\

\includegraphics[height = 0.125 \linewidth]{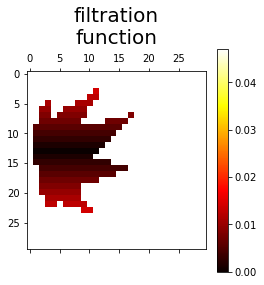} &
\includegraphics[height = 0.125 \linewidth]{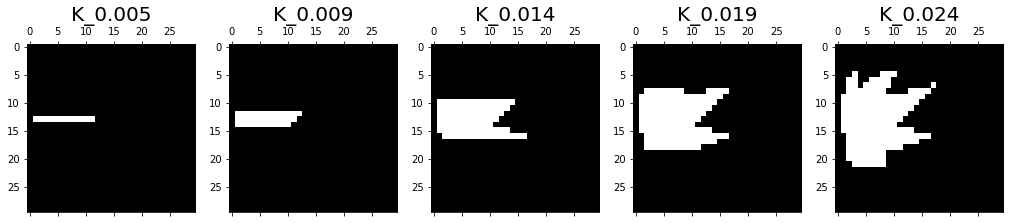} &
\includegraphics[height = 0.125 \linewidth]{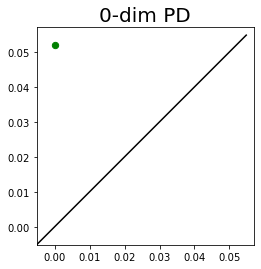} \\

\end{tabular}
}
\caption{The tubular filtration for the 9 considered lines, and the resulting 0-dimensional PDs for an example image. The concavity is detected with multiple connected components that are seen for a few lines.}
\label{fig_flavia_ph}
\end{figure}

Moreover, since the convexity measure is calculated relative to the size of the leaf, the tubular filtration \emph{directions} remain the same, but 8 filtration lines pass through the corners of the leaf rather than the corners of the image (Figure~\ref{fig_convexity_pipeline}), and the lifespans are normalized relative to the area of the leaf (total number of black pixels in the binary image). In this way, PH depicts information about concavities for any line and relative to the leaf size, and it is invariant under translation and scaling. We use $30 \times 30$ cubical complexes (on binary images), to capture a higher level of detail for the leaves of different convexity, in comparison with the $20 \times 20$ resolution for the cruder differences in our synthetic data set in Section~\ref{section_convexity}. Figure~\ref{fig_flavia_ph} visualizes the tubular filtration for the 9 different lines, and the resulting 0-dimensional PDs, for an example leaf image (bottom image from Figure~\ref{fig_flavia_data}).

Linear regression on the FLAVIA data set, trained on $70\%$ of random images, with each image represented with the 9-dimensional vector of lifespans of the second most persisting component across all tubular filtration lines, obtains a mean square error of $0.00065$. The regression line in Figure~\ref{fig_flavia_results} shows that PH is effective in classifying the FLAVIA leaves according to a measure of convexity. The convexity of some thin leaves (such as the image in the third panel in Figure~\ref{fig_flavia_data}) gets overestimated with our PH pipeline, since concavity is not captured well with our crude resolution, that could easily be improved.

\begin{figure}[h]
\centering
\centering
\begin{tabular}{ccc}
\includegraphics[height = 0.2 \linewidth]{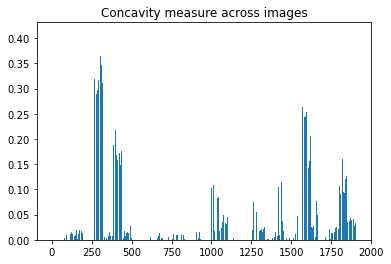} &
\includegraphics[height = 0.2 \linewidth]{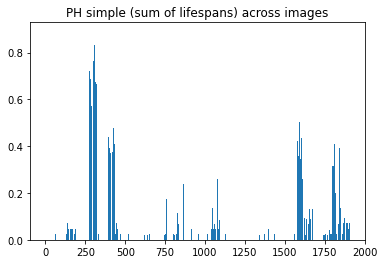} &
\includegraphics[height = 0.2 \linewidth]{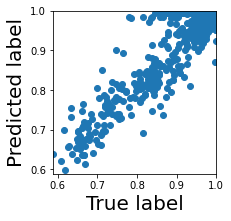} \\
\end{tabular}
\caption{Results on the FLAVIA data set. The first two plots show that there is a good correspondence between the concavity measure $(1-c(X))$ (left panel) and the simple PH signature that only considers the sum of lifespans of the second most persisting connected component, across the tubular filtration lines (middle panel). The regression line on lifespans from all 9 tubular filtration lines shows good performance of our PH pipeline.}
\label{fig_flavia_results}
\end{figure}

Furthermore, even more detailed information can be captured if the lifespans of the  third, fourth, ... most persistent connected component would be kept, because some leaves have more than two sources of concavity for a single line, that result in more than two connected components. For example, the 0-dimensional PD of the image in Figure~\ref{fig_flavia_ph} has more than two persistence intervals for some tubular filtration lines. The accuracy can thus be improved by considering the lifespans of \emph{all} short intervals (and across all lines), and again, by considering more tubular filtration lines (Section~\ref{section_convexity}). This clearly illustrates how the choice of filtration and signature, the input and output of PH, should be guided by the signal in the given application. Moreover, it shows that one short interval can be sufficient for some applications, but that in other cases, many short intervals might store the needed additional level of (geometric) information.

\end{document}